\theoremstyle{theorem}
\newtheorem{theorem}{Theorem}[section]
\newtheorem{proposition}[theorem]{Proposition}
\newtheorem{lemma}[theorem]{Lemma}
\newtheorem{corollary}[theorem]{Corollary}
\newtheorem{conjecture}[theorem]{Conjecture}
\theoremstyle{definition}
\newtheorem{definition}[theorem]{Definition}
\newtheorem{remark}[theorem]{Remark}
\newtheorem{example}[theorem]{Example}
\newcommand{\PP}{\mathbb{P}}
\newcommand{\RR}{\mathbb{R}}
\newcommand{\QQ}{\mathbb{Q}}
\newcommand{\CC}{\mathbb{C} }
\newcommand{\ZZ}{\mathbb{Z}}
\title{\textbf{The Dubrovin threefold of an algebraic curve}}
\author{{Daniele Agostini, T\"urk\"u \"Ozl\"um \c{C}elik,  and Bernd Sturmfels}}
\date{}
\begin{document}

\maketitle

\begin{abstract}
\noindent 
The solutions to the Kadomtsev-Petviashvili equation that arise
from a fixed complex algebraic curve are parametrized
by a threefold in a weighted projective space, which
we name after Boris Dubrovin.
Current methods from nonlinear algebra are applied to study
parametrizations and defining ideals of  Dubrovin threefolds. 
  We highlight the dichotomy between
 transcendental representations and exact algebraic computations.
 Our main result on the algebraic side is a toric  degeneration of
 the Dubrovin threefold into the product of the underlying
 canonical curve and a weighted projective plane.
\end{abstract}

\section{Introduction}\label{sec:Intro}

Let $C$ be a complex algebraic curve of genus $g$.
The associated {\em Dubrovin threefold} $\mathcal{D}_C$ lives
in a weighted projective space $\mathbb{WP}^{3g-1}$,
where the weights are $1$, $2$ and $3$, and each occurs precisely $g$ times.
The homogeneous coordinate ring of
$\mathbb{WP}^{3g-1}$ is the graded polynomial~ring
\begin{equation}
\label{eq:polyring}
 \CC[U,V,W] \,=\,\CC[\,u_1,u_2,\ldots,u_g,\,v_1,v_2,\ldots,v_g,\,w_1,w_2,\ldots,w_g\, ], 
 \end{equation}
where ${\rm deg}(u_i) = 1$, 
${\rm deg}(v_i) = 2$ and
${\rm deg}(w_i) = 3$, for $i=1,2,\ldots,g$.
Points $(U,V,W)$ in the Dubrovin threefold $\mathcal{D}_C$ correspond to
solutions of a nonlinear partial differential equation that describes the motion of
water waves, namely the {\em Kadomtsev-Petviashvili (KP)~equation} 
\begin{equation}
\label{eq:KP1}
 \frac{\partial}{\partial x}\left( 4u_t - 6uu_x -u_{xxx} \right)\, = \, 3u_{yy}.
 \end{equation}
 This differential equation
 represents a universal integrable system in two spatial dimensions, with coordinates $x$ and $y$.
 The unknown function  $u = u(x,y,t)$
describes the evolution in time $t$ of long waves of small amplitude with slow dependence on the transverse coordinate~$y$.

The Dubrovin threefold is an object that appears tacitly in the prominent article \cite{Dub81}
on the connection between integrable systems and Riemann surfaces. Another source
that mentions this threefold is a manuscript on the Schottky problem by
John Little \cite[\S 2]{LittleArXiv}.
Our aim here is to  develop this subject from the 
current perspective of nonlinear algebra \cite{INLA}.

In the algebro-geometric approach to the KP equation (\ref{eq:KP1}) one seeks solutions of the form
\begin{equation}
\label{eq:u_tau}
 u(x,y,t) \,\,=\,\, 2\frac{\partial^2}{\partial x^2} \log \uptau(x,y,t)+c,
\end{equation}
where $c\in\mathbb{C}$ is a constant.
The  function $\uptau = \uptau(x,y,t)$ is known as the {\em $\uptau$-function}
in the theory of integrable systems.
A sufficient condition for (\ref{eq:KP1}) to hold is the quadratic PDE
\begin{equation}\label{eq:hirota}
\!\! ( \uptau_{xxxx}\uptau -4\uptau_{xxx}\uptau_x + 3\uptau_{xx}^2)
\,\,+\,\,4 (\uptau_{x}\uptau_t \,-\,  \uptau \uptau_{xt}) \,\,+\,\, 6 c ( \uptau_{xx}\uptau \, -\, \uptau_x^2)
\,\,+\,\, 3 (\uptau \uptau_{yy}\,-\, \uptau_y^2)\,\,+\,\,8d\uptau^2 
\,\,= \,\,0.
\end{equation}
This is known as {\em Hirota's bilinear form}.
The scalars $ c,d \in \mathbb{C}$ are uniquely determined by~$\uptau$.

One constructs $\uptau$-functions from an algebraic curve $C$ of genus $g$ 
using the method described in \cite{Dub81, DFS}.
Let $B$ be a Riemann matrix for $C$. This is a symmetric $g \times g$ matrix
with entries in $\CC$ whose real part is negative definite. Its construction from $C$ is shown in~\eqref{eq:riemannmatrix}.

 Following \cite[equation (1.1.1)]{Dub81}, we introduce the associated {\em Riemann theta function}
\begin{equation}
\label{eq:RTFreal}
\theta \,=\, \theta({\bf z}\, |\, B)\,\,\, = \,\,\,
\sum_{{\bf u} \in \mathbb{Z}^g} {\rm exp} \left( \frac{1}{2} {\bf u}^T B {\bf u} \, + {\bf u}^T {\bf z} \right).
\end{equation}
The vector of unknowns ${\bf z} = (z_1,z_2,\ldots,z_g)$ is now replaced 
by a linear combination of the vectors $U,V,W$, seen above in
our polynomial ring (\ref{eq:polyring}).
The result is the special $\uptau$-function 
\begin{equation}
\label{eq:thetatau}
 \uptau(x,y,t) \,\, = \,\,\theta( \,Ux + V y + W t + D\,| \, B\,),
 \end{equation}
where $D \in \CC^g$ is also a parameter.
The \emph{Dubrovin threefold} $\mathcal{D}_C$ comprises triples $(U,V,W)$ for which the following condition holds:
there exist $c,d \in \CC$ such that, for all vectors $D \in \CC^g$,
the function (\ref{eq:thetatau}) satisfies the  equation
(\ref{eq:hirota}). This implies that (\ref{eq:u_tau}) satisfies~(\ref{eq:KP1}).
A celebrated result due to Igor Krichever \cite[Theorem 3.1.3]{Dub81} states that such
solutions exist and can be constructed from every smooth point on the curve $C$.
This ensures that $\mathcal{D}_C$ is a threefold.

Krichever's construction of solutions to the KP equation amounts to a  parametrization
of the Dubrovin threefold $\mathcal{D}_C$ by abelian functions. This
transcendental parametrization  will be reviewed in Section~\ref{sec2}.
In Section~\ref{sec3} we present an alternative parametrization,
valid after a linear  change of coordinates in $\mathbb{WP}^{3g-1}$,
that is entirely algebraic. In particular, we shall see that, for curves
$C$ defined over the rational numbers $\mathbb{Q}$, the
 Dubrovin threefold $\mathcal{D}_C$ is also defined over $\mathbb{Q}$.
 We now present a running example which  illustrates this point.

\begin{example} \label{ex:running}
Let $C$ be the genus $2$ curve defined by $y^2=x^6-1$. Its
Dubrovin threefold $\mathcal{D}_C$ lives in  $\mathbb{WP}^5$.
By the construction in Section~\ref{sec3}, it has the algebraic parametrization
\begin{equation}
\label{eq:running2}
\begin{matrix}
u_1=a U_1\,, \,\,
u_2=a U_2\, , \,\,\,
v_1 = a^2 V_1+2 b U_1 \, , \,\,
v_2 = a^2 V_2+2 b U_2 \, ,
\\
w_1 = a^3 W_1+3 a b V_1+c U_1\, , \,\,\,
w_2 = a^3 W_2+3 a b V_2+c U_2,
\end{matrix}
\end{equation}
where $a,b,c$ and $x$ are free parameters, we specify $y = \sqrt{x^6-1}$, and
we abbreviate
\begin{equation}
\label{eq:running3} \!\!\!\!\!\! \begin{matrix}
U_1 = -\frac{1}{y}\, ,\,\,
U_2 =-\frac{x}{y}\, ,\,\,
V_1 = \frac{3x^5}{y^3}\, ,\,\,
V_2 = \frac{2y^2{+}3}{y^3}\, ,\,\,
W_1 =  -\frac{(12y^2{+}27)x^4}{2y^5}\, ,\,\,
W_2  =  -\frac{(6y^2{+}27)x^5}{2y^5}. \end{matrix}
\end{equation}
Every point $(U,V,W)$ on the threefold $\mathcal{D}_C$
gives rise to a $\uptau$-function (\ref{eq:thetatau}) that satisfies (\ref{eq:hirota}).

Using standard implicitization methods  \cite[\S 4.2]{INLA}, we compute
the homogeneous prime ideal of the Dubrovin threefold $\mathcal{D}_C$.
This ideal is minimally generated by the  five polynomials
\begin{equation}
\label{eq:running1} 
\begin{matrix}
3 u_1^5\,+\, 2 u_2^2 w_1-2 u_1 u_2 w_2 +
3 u_1 v_2^2-3 u_2 v_1 v_2\,, \qquad \qquad \qquad \qquad \qquad \qquad
\qquad \qquad \qquad \,\,\, \\
3 u_2^5\,-\,2 u_1^2 w_2+ 2 u_1 u_2 w_1 -
3 u_2 v_1^2+3 u_1 v_1 v_2 \,, \,\,
9 (u_1^4 u_2^4-u_1^4 v_1^2+u_2^4 v_2^2)-4(u_2 w_1 - u_1  w_2)^2, \\
\!\! 9 (u_1^3 u_2^4 v_1{-}u_1^3 v_1^3{+}u_1^4 u_2^3 v_2{+}u_2^3 v_2^3)
+6 (u_1^4 v_1 w_1{-}u_2^4 v_2 w_2)-4 (u_2 w_1 {-} u_1 w_2)(v_2 w_1 {-} v_1 w_2)\,, \\
9 (u_1^2 u_2^4 v_1^2 - u_1^2 v_1^4 + u_1^3 u_2^3 v_1 v_2 + u_1^4 u_2^2 v_2^2+u_2^2 v_2^4)-4 (u_1^4 w_1^2-u_2^4 w_2^2)\\
\quad -6 (u_1^3 u_2^4 w_1-2 u_1^3 v_1^2 w_1+u_1^4 u_2^3 w_2+2 u_2^3 v_2^2 w_2)-4 (v_2 w_1-v_1 w_2)^2.
\end{matrix} 
\end{equation}
These equations are homogeneous of degrees $5,5,8,9,10$, and their coefficients are integers.
\end{example}

Theorem \ref{thm:2quintics} extends the computation in \eqref{eq:running1} to
arbitrary curves of genus two. We pass to genus three in Theorem \ref{thm:quartic},
by computing the prime ideal of $\mathcal{D}_C$ for plane quartics~$C$.

One can also find polynomials that vanish on $\mathcal{D}_C$
directly from the PDE above.  Namely, if we plug the $\uptau$-function
(\ref{eq:thetatau}) into (\ref{eq:hirota}), and we require the result to be identically zero,
then we obtain polynomials in $U,V,W$ and $c,d$ whose coefficients are expressions
in theta constants.  Such equations were derived in
\cite[\S 4.3]{Dub81}. This approach  will be studied in Section~\ref{sec4},
where we focus on the
numerical evaluation of theta constants.
For this we use the package in \cite{julia}.

In Section \ref{sec5} we move on to curves $C$ of arbitrary genus.
Theorem \ref{thm:initialideal} identifies an explicit
initial ideal for the Dubrovin threefold $\mathcal{D}_C$.
 Geometrically, this is a toric
degeneration of $\mathcal{D}_C$ into the product of
the canonical model of $C$
with a weighted projective plane $\mathbb{WP}^2$.
For genus four and higher, we run into the   Schottky problem:
most Riemann matrices $B$ do not arise from algebraic curves.
A solution using the KP equation was given by Shiota \cite{KrSh, Shiota}. 
His characterization of valid matrices $B$ amounts to the
existence of the Dubrovin threefold.

In Section \ref{sec6} we examine special Dubrovin threefolds,
arising from curves that are singular and reducible.
We focus on tropical degenerations, where the 
theta function turns into a finite exponential sum, and
 node-free degenerations, where the theta function
turns into a polynomial \cite{Eies09}. These were studied for
genus three in the context of theta surfaces~\cite[\S 5]{struwe}.

\section{Parametrization by Abelian Functions} \label{sec2}

A parametric representation of the Dubrovin threefold $\mathcal{D}_C$ is given
in \cite[equation (3.1.24)]{Dub81}. This follows Krichever's result
on the algebro-geometric construction of solutions to the KP equation.
 One obtains expressions for
the coordinates of $U,V,W$ by integrating normalized differentials of the second kind
on the curve $C$. In this section we develop this in detail.

We fix a symplectic basis $\,a_1,b_1,\dots,a_g,b_g\,$ for the
first homology group $H_1(C,\mathbb{Z})$ of a 
compact Riemann surface $C$ of genus $g$.
 For any point $p\in C$, we
choose three normalized differentials of the second type, denoted 
$\Omega^{(1)}$, $\Omega^{(2)}$ and $\Omega^{(3)}$. These are meromorphic
differentials on $C$, with poles only at $p$, and with local expansions at $p$ of the~form
\begin{equation}
\Omega^{(1)} = -\frac{1}{z^2}dz+\dots, \qquad \Omega^{(2)} = -\frac{2}{z^3}dz+\dots, \qquad \Omega^{(3)} = -\frac{3}{z^4}dz+ \cdots.
\end{equation}  
Here  $z$ is a local coordinate and the dots denote the regular terms. 
By  \cite[equation~(3.1.24)]{Dub81}, we compute the coordinates of the
vectors $U,V,W$ by integrating these differential forms:
\begin{equation}\label{eq:UVWintegration}
u_i = \int_{b_i} \Omega^{(1)}, \qquad v_i = \int_{b_i} \Omega^{(2)}, \qquad w_i = \int_{b_i} \Omega^{(3)}
\qquad \hbox{for} \,\, i = 1,2,\ldots,g.
\end{equation}

These integrals can be evaluated using a suitable basis for the holomorphic differentials on $C$.
Namely, suppose that $\omega_1,\omega_2,\dots,\omega_g$ is a basis of holomorphic differentials such that 
\begin{equation}\label{eq:adaptedbasis} 
\int_{a_j} \omega_k\,\,=\,\, 2\pi i \cdot \delta_{jk} ,
\end{equation}
where $i = \sqrt{-1}$ and we use the Kronecker $\delta$ notation.
For such a special basis, we can compute a
symmetric Riemann matrix $B = (B_{jk})$ for the curve $C$ by the following integrals:

$$ \quad B_{jk} \,\,=\,  \int_{b_j} \omega_k  \qquad \hbox{for} \,\, 1 \leq j,k \leq g. $$

Around the point $p$ on $C$, we can  write each holomorphic differential in our basis as 
\begin{equation}
\label{eq:holodiff}
\omega_k\,=\,H_k(z)dz,
\end{equation}
where $H_k(z)$ is a holomorphic function of the local coordinate $z$. We
denote the first and second derivative of this function by
$\dot{H}_k(z)$ and $\ddot{H}_k(z)$.
 Our differentials of the second type~are
\begin{equation}
\label{eq:Omega}
 \Omega^{(1)}\,=\,-\omega^{(1)}\,,
\,\,\, \Omega^{(2)}\,=\, -2\omega^{(2)}\,,
\,\,\, \Omega^{(3)} \,=\, -3\omega^{(3)}, 
\end{equation}
 where  $\omega^{(n)}$ has a local expansion $\omega^{(n)} = \frac{1}{z^{n+1}}dz+\dots $. \
  Now, \cite[Lemma 2.1.2]{Dub81} shows that 
\begin{equation}
\label{eq:intloweromega}
\int_{b_j} \omega^{(1)} = H_j(p) , \qquad \int_{b_j} \omega^{(2)} = \frac{1}{2}\dot{H}_j(p) ,
          \qquad \int_{b_j} \omega^{(3)} = \frac{1}{6}\ddot{H}_j(p).
\end{equation}
Putting all of this together, we record the following result:

\begin{proposition} \label{prop:makeKPsol}
Let $p$ be a smooth point on the genus $g$ curve $C$. The vectors in (\ref{eq:UVWintegration})~are
\begin{equation}\label{eq:UVW}
U  \,=\,  -\begin{pmatrix} H_1(p) \\ \vdots \\ H_g(p)  \end{pmatrix}, \quad
V \,=\,  -\begin{pmatrix} \dot{H_1}(p) \\ \vdots \\ \dot{H_g}(p)  \end{pmatrix}, \quad 
W \,=\,  -\frac{1}{2}\begin{pmatrix}\ddot{H_1}(p) \\ \vdots \\ \ddot{H_g}(p)  \end{pmatrix}.
\end{equation}
These formulas specify a $\uptau$-function \eqref{eq:thetatau},
with an arbitrary vector  $D \in \CC^g$,
 such that the~function $u(x,y,t)$ defined in \eqref{eq:u_tau}  is
 a solution to the KP equation \eqref{eq:KP1}
for some constant $c \in \CC$.
\end{proposition}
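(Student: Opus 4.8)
The statement has two parts: first, that the integrals in \eqref{eq:UVWintegration} evaluate to the explicit formulas \eqref{eq:UVW}; second, that the resulting $\uptau$-function solves the KP equation. The first part is essentially bookkeeping, assembling the pieces already laid out in the excerpt. Combining \eqref{eq:Omega} with \eqref{eq:intloweromega}, we get
\[
u_i = \int_{b_i}\Omega^{(1)} = -\int_{b_i}\omega^{(1)} = -H_i(p),\quad
v_i = \int_{b_i}\Omega^{(2)} = -2\int_{b_i}\omega^{(2)} = -\dot H_i(p),
\]
and likewise $w_i = \int_{b_i}\Omega^{(3)} = -3\int_{b_i}\omega^{(3)} = -\tfrac12\ddot H_i(p)$, which is exactly \eqref{eq:UVW}. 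This uses only that $p$ is a smooth point, so that a local coordinate $z$ exists and the holomorphic differentials have the expansions \eqref{eq:holodiff}.

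For the second part, the strategy is to invoke Krichever's theorem, cited in the excerpt as \cite[Theorem 3.1.3]{Dub81}, which asserts that from any smooth point $p$ on $C$ the associated data $(U,V,W)$, together with a suitable pair of constants $c,d \in \CC$, yields a $\uptau$-function of the form \eqref{eq:thetatau} satisfying Hirota's bilinear equation \eqref{eq:hirota} for every $D \in \CC^g$; and that \eqref{eq:hirota} in turn forces \eqref{eq:u_tau} to satisfy \eqref{eq:KP1}. The only thing to check is that the $U,V,W$ produced by our formulas \eqref{eq:UVW} agree with the vectors appearing in Krichever's construction — but this is precisely the content of \cite[equation (3.1.24)]{Dub81}, which is the definition \eqref{eq:UVWintegration} we started from. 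So the proof amounts to: (i) verify the algebraic identities reducing \eqref{eq:UVWintegration} to \eqref{eq:UVW}; (ii) cite Krichever for the fact that these vectors solve KP; (iii) note that $d$ may be nonzero in general, so only the constant $c$ is asserted in the statement, consistently with \eqref{eq:u_tau}.

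**Main obstacle.** There is no genuine obstacle here; the proposition is a clean repackaging of results quoted from \cite{Dub81}. The one point requiring a little care is the normalization conventions: one must check that the factors $-1$, $-2$, $-3$ in \eqref{eq:Omega} and the factors $1,\tfrac12,\tfrac16$ in \eqref{eq:intloweromega} combine to give precisely the coefficients $-1$, $-1$, $-\tfrac12$ in \eqref{eq:UVW}, and that the sign and scaling conventions for $\Omega^{(1)},\Omega^{(2)},\Omega^{(3)}$ match those under which Krichever's theorem is stated in \cite{Dub81}. Once the conventions are aligned, the proof is immediate.
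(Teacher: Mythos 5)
Your proof is correct and follows the same route as the paper: substitute \eqref{eq:Omega} into \eqref{eq:UVWintegration}, apply \eqref{eq:intloweromega} to get the coefficients $-1$, $-2\cdot\tfrac12=-1$, $-3\cdot\tfrac16=-\tfrac12$, and cite Krichever's theorem for the second assertion. Nothing is missing.
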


\begin{proof}
We substitute \eqref{eq:Omega} into \eqref{eq:UVWintegration}, and we 
then use (\ref{eq:intloweromega}) to obtain the formulas  (\ref{eq:UVW}).
The second assertion is Krichever's result \cite[Theorem 3.1.3]{Dub81}
mentioned in the Introduction.
\end{proof}

Proposition \ref{prop:makeKPsol} expresses $(U,V,W)$ as a function
of the point $p $ on the curve $C$.
This defines a map from $C$ into the weighted projective space 
whose coordinate ring equals~\eqref{eq:polyring}:
  \begin{equation}
 \label{eq:happymap}
\! \begin{matrix} C \rightarrow \mathbb{WP}^{3g-1} , \,\,
 p \,\mapsto \,  (U,V,W) \,=\,
 - \bigl(H_1,\dots,H_g,\dot{H}_1,\dots,\dot{H}_g,\frac{1}{2}\ddot{H}_1,\dots,\frac{1}{2}\ddot{H}_g \bigr). 
 \end{matrix} 
 \end{equation}
The image curve is a lifting of the canonical 
model from $\PP^{g-1}$ which we call the {\emph{lifted canonical curve}}.
Indeed, if we consider only the $U$-coordinates in (\ref{eq:UVW}) then these
define the canonical map from $C$ into $\PP^{g-1}$. This is an embedding
if $C$ is not hyperelliptic. For hyperelliptic curves of genus $g \geq 2$,
the canonical map is a $2$-to-$1$ cover of the rational normal curve.

Our construction of lifting the canonical curve from $\PP^{g-1}$ to
$\mathbb{WP}^{3g-1}$ is not intrinsic. The map specified in
\eqref{eq:UVWintegration} or \eqref{eq:UVW} depends
on the choice of a local coordinate $z$ around $p$. 
Suppose we apply an analytic change of local coordinates, $\tilde z=\phi(z)$, around $p$ on $C$.
Then our basis of holomorphic differentials can be expressed, using 
some holomorphic functions $G_i$, as
\begin{equation}\label{eq:changecoordinate}  
\qquad \omega_i \,\,=\,\, G_i(\tilde z)d \tilde z \,\,=\,\, G_i(\phi(z))\phi'(z)dz \,\,\,=\,\, H_i(z)dz
\qquad \hbox{for $\,i=1,2,\ldots,g$.} 
\end{equation}

Using the chain rule from calculus, we find
\begin{equation}
\begin{pmatrix} H_i(p) \\ \dot{H}_i(p) \\ \ddot{H}_i(p) \end{pmatrix} \,\,=\,\,
 \begin{pmatrix} \phi'(0) & 0 & 0 \\ \phi''(0) & \phi'(0)^2 & 0 \\ 
 \phi'''(0) & 3\phi''(0)\phi'(0) & \phi'(0)^3  \end{pmatrix}
 \begin{pmatrix} G_i(p) \\ \dot{G}_i(p) \\ \ddot{G}_i(p) \end{pmatrix}.
\end{equation}
Setting $\,a=\phi'(0),\,b={\phi''(0)}/{2}\,$ and $\,c={\phi'''(0)}/{2}$, we consider the matrix group
\begin{equation}\label{eq:groupG}
G \,\,=\,\, \left\{\, \begin{pmatrix} a & 0 & 0 \\ 2b & a^2 & 0 \\ c & 3ab & a^3  \end{pmatrix} 
\,\,\bigg|\,\,\, a\in \mathbb{C}^*,\,b,c\in \mathbb{C} \,\right\}
\,\, \subset \,\, {\rm GL}(3,\mathbb{C}).
\end{equation}
The group $G$ acts on the weighted projective space $\mathbb{WP}^{3g-1}$ as follows:
\begin{equation}\label{eq:actionG}
\begin{pmatrix} a & 0 & 0 \\ 2b & a^2 & 0 \\ c & 3ab & a^3  \end{pmatrix}\cdot 
\begin{pmatrix} u_i \\ v_i \\ w_i \end{pmatrix} \,\,=\,\, \begin{pmatrix} a u_i \\ 2bu_i+a^2v_i \\ cu_i+3abv_i+a^3w_i
 \end{pmatrix}
 \qquad \hbox{for $\,i=1,2,\ldots,g$.}
\end{equation}

\noindent The orbit of a given point $(\widetilde U,\widetilde V, \widetilde W)$ on $\mathcal{D}_C$ under the action is the surface defined by
 \begin{equation}
\label{eq:orbiteqns}
 \begin{matrix}
     {\tilde u_i} u_j- {\tilde u_j}u_i \, , \,\,
   {\tilde u_i}  ( {\tilde u_j} v_i - {\tilde u_i} v_j)
   - u_i ({\tilde v_i} u_j -{\tilde v_j} u_i ) \, , \,\,\, {\rm and} & \\
     2 {\tilde u_j}^2 ({\tilde u_j} w_i-{\tilde u_i}  w_j)
  -2 u_j^2 ({\tilde w_i} u_j-{\tilde w_j}u_i )
  +3 {\tilde u_j} u_j ({\tilde v_j} v_i - {\tilde v_i} v_j)
&  \,\,\,
  \hbox{for} \,\,1 \leq i < j \leq g. \end{matrix}
\end{equation}
 In these equations, the quantities $u_i,v_i,w_i$ are variables while
$\tilde u_i, \tilde v_i, \tilde w_i$ are  complex numbers. This surface 
defined by \eqref{eq:orbiteqns} 
is isomorphic to the weighted projective plane $\mathbb{WP}^2 = \mathbb{P}(1,2,3)$.
The Dubrovin threefold $\mathcal{D}_C$ is
the union of a $1$-parameter family of such surfaces in $\mathbb{WP}^{3g-1}$.
Namely, the union is taken over all points $(\widetilde U, \widetilde V, \widetilde W)$ 
in the image of the map \eqref{eq:happymap}. In Theorem \ref{thm:initialideal}
we shall present a toric degeneration that turns this family into a product.

\begin{remark} \label{rem:theothercomponent}
We here regard $\mathcal{D}_C$ as an irreducible variety. This
is slightly inconsistent with the Introduction,
where $\mathcal{D}_C$ was defined in terms of solutions to the KP equation.
The reason is the involution $(U,V,W) \mapsto (U,-V,W)$ on $\mathbb{WP}^{3g-1}$.
By \cite[(4.2.5)]{Dub81}, this involution preserves the property that 
 \eqref{eq:thetatau} solves \eqref{eq:hirota}, and it swaps
  $\mathcal{D}_C$ with an isomorphic copy $\mathcal{D}_C^{-}$.
Thus the parameter space for solutions
\eqref{eq:u_tau} to \eqref{eq:KP1} would be
the reducible threefold 
 $\mathcal{D}_C \cup \mathcal{D}_C^{-}$.
\end{remark}

In our parametric representation of the Dubrovin threefold  $\mathcal{D}_C$, 
we required holomorphic differentials    (\ref{eq:holodiff})
that are adapted to a symplectic homology basis in the sense of \eqref{eq:adaptedbasis}.  This hypothesis is
essential when we seek solutions to the KP equation. However, it is not needed when
studying algebraic or geometric properties of $\mathcal{D}_C$. For that we allow a
linear change of coordinates. This is important in practice
because differentials $\omega_k$ that satisfy \eqref{eq:adaptedbasis} may
not be readily available. We shall see this in the symbolic computations in Section \ref{sec3}.
In what follows we explain how to work with an arbitrary basis of holomorphic differentials~(\ref{eq:holodiff}).

Let $\widetilde{\omega}_1,\dots,\widetilde{\omega}_g$ be such a basis.
We consider the corresponding $g \times g$ period matrices
\begin{equation}\label{eq:periodmatrices}
\Pi_a \,=\, \left( \int_{a_j} \widetilde{\omega}_i \right)_{ij} \quad
{\rm and} \qquad \Pi_b \,=\, \left( \int_{b_j} \widetilde{\omega}_i \right)_{ij}.
\end{equation} 
From these two non-symmetric matrices, we obtain the symmetric Riemann matrix
\begin{equation}\label{eq:riemannmatrix}
B \,\,=\,\, 2\pi i \cdot  \Pi_a^{-1}\Pi_b .
\end{equation}
For a given plane curve $C$,
the matrices \eqref{eq:periodmatrices}
and \eqref{eq:riemannmatrix} can be computed using numerical methods. 
In our experiments, we used the
{\tt SageMath} implementation due to Bruin et al.~\cite{BruSijZot}.

Given this data, a normalized basis of differentials as in \eqref{eq:adaptedbasis} is obtained as follows:
\begin{equation}
\label{eq:basistransf}
( \omega_1, \omega_2,\ldots,\omega_g)^T \,\,=\,\, \,2\pi i \cdot  \Pi_a^{-1} \,
( \widetilde{\omega}_1, \widetilde{\omega}_2,\ldots, \widetilde{\omega}_g)^T. 
 \end{equation}
The linear transformation in (\ref{eq:basistransf}) induces
a linear change of coordinates on the Dubrovin threefold 
$\mathcal{D}_C$ in $\mathbb{WP}^{3g-1}$
as follows. Consider a smooth
point $p\in C$ and a local coordinate $z$ around~$p$.
We write $\,\widetilde{\omega}_i = \widetilde{H}_i(z)dz$, 
where $\widetilde{H}_i$ is holomorphic,
and we define vectors as in~\eqref{eq:UVW}:
\begin{equation}\label{eq:UVWtilde} \begin{small}
\widetilde{U}  \,:= \, -\begin{pmatrix} \widetilde{H}_1(p) \\ \vdots \\ \widetilde{H}_g(p)  \end{pmatrix}, \quad 
\widetilde{V} \,:=\,  -\begin{pmatrix} \dot{\widetilde{H}_1}(p) \\ \vdots \\ \dot{\widetilde{H}_g}(p)  \end{pmatrix}, \quad 
\widetilde{W} \,:=\,  -\frac{1}{2}\begin{pmatrix}\ddot{\widetilde{H}_1}(p) \\ \vdots \\ \ddot{\widetilde{H}_g}(p)  \end{pmatrix}.
\end{small}
\end{equation}
Then the vectors $U,V,W$ for the adapted basis $\omega_1, \omega_2,\ldots,\omega_g$ are given by
\begin{equation}\label{eq:changecoordinates}
U = 2\pi i \cdot  \Pi_a^{-1} \widetilde{U}, \qquad V = 2\pi i \cdot  \Pi_a^{-1} \widetilde{V}, \qquad W = 2\pi i \cdot  \Pi_a^{-1} \widetilde{W}.
\end{equation}
Any such linear change of coordinates commutes with the action of the group $G$ as in \eqref{eq:actionG}.

\begin{remark}\label{rmk:nicebasis} 
In conclusion, for any basis $\widetilde{\omega}_1,\widetilde{\omega}_2,\dots,\widetilde{\omega}_g$ of holomorphic differentials, we 
can define a Dubrovin threefold in $\mathbb{WP}^{3g-1}$. It is the union of all $G$-orbits of
the points $(\widetilde{U},\widetilde{V},\widetilde{W})$  in \eqref{eq:UVWtilde}.
 Any two such Dubrovin threefolds are related to each other via a linear change of coordinates.
This scenario in $\mathbb{WP}^{3g-1}$ is analogous to that for canonical 
curves in $\mathbb{P}^{g-1}$.
Thus, to compute Dubrovin threefolds and their prime ideals, we can use any
basis $\widetilde{\omega}_1,\widetilde{\omega}_2,\dots,\widetilde{\omega}_g$ of our choosing.
 Whenever this is clear, we  omit the $\,\,\widetilde{}\,\,$  superscript from our notation. However, if
 we want the Dubrovin threefold to parametrize solutions \eqref{eq:u_tau} of the KP equation, 
then we must use a normalized basis as in \eqref{eq:adaptedbasis} or perform the 
linear change of coordinates in~\eqref{eq:changecoordinates}.   
\end{remark}

To illustrate Remark \ref{rmk:nicebasis}, we examine KP solutions 
arising from our running example.

\begin{example}\label{ex:RiemannMatrix}
Consider any zero $\bigl(\widetilde{U},\widetilde{V},\widetilde{W}\bigr)$ 
in $\mathbb{WP}^5$ of the five polynomials in \eqref{eq:running1}.
Here, $\mathcal{D}_C$ was computed using the basis of
differentials $\widetilde{\omega}_1, \widetilde{\omega}_2$ 
given in \eqref{eq:genus2basis} below.   This basis relates to an adjusted basis via
 period matrices which we computed numerically using {\tt SageMath}:
$$
\begin{small}
 \Pi_a  =
\begin{pmatrix}  \phantom{-}1.2143253i &\!  \phantom{-} 1.0516366 + 0.6071627i \\
-1.2143253\, &  \! -0.6071627 - 1.0516366i   
 \end{pmatrix}\!, \,
 \Pi_b = \begin{pmatrix} 
 -1.0516366-0.6071627i & \! 1.2143253i  \\
 -0.6071627-1.0516366i & \! 1.2143253\,
 \end{pmatrix}. 
 \end{small}
 $$
We can now use these matrices to construct two-phase solutions of the KP equation
\eqref{eq:KP1}. First we compute the following Riemann matrix for the curve $\,C = \{ y^2 = x^6-1\}\,$
in Example~\ref{ex:running}:
\[ B \,=\, 2\pi i \cdot \Pi_a^{-1}\Pi_b \,=\, \begin{pmatrix} -7.25519746 & \phantom{-} 3.62759872\, \\
\phantom{-} 3.62759872 & -7.25519746 \, \end{pmatrix} \,\, = \,\,
\frac{2 \pi }{\sqrt{3}} \begin{pmatrix} 2 \! & \! \phantom{-}1 \, \\ 1 \! & \! \phantom{-}2  \,
\end{pmatrix}. \]

This allows us to evaluate the theta function $\theta( {\bf z}|B) $, e.g.~using the
{\tt Julia} package described in \cite{julia}. 
The evaluation is done, for any fixed $D \in \CC^2$, at the points
$Ux + Vy+Wz+D$, where the coefficients $U = (u_1,u_2)^T$,
$V = (v_1,v_2)^T$ and $W = (w_1,w_2)^T$ are obtained
from $\widetilde{U},\widetilde{V},\widetilde{W}$
by the linear change of coordinates in \eqref{eq:changecoordinates}.
The resulting function $u(x,y,t)$ in \eqref{eq:u_tau} solves \eqref{eq:KP1}
for an appropriate constant $c$. In this manner, 
the Dubrovin threefold $\mathcal{D}_C$ that is given by (\ref{eq:running1}) 
represents a $3$-parameter family of two-phase solutions to the KP equation \eqref{eq:KP1}.

For a numerical example, fix $p=(2,\sqrt{63})$  on $C$. The corresponding parameters are 
$$ \begin{small}
U = \begin{pmatrix} 
\phantom{-}0.133702+0.111777i \\ 
-0.059901-0.119802i
\end{pmatrix} \! ,
\,
V = \begin{pmatrix}
-0.151861-0.140376i\\
\phantom{-}0.091278+0.122654i
\end{pmatrix} \! ,\,
W = \begin{pmatrix}
\phantom{-}0.131964 +0.13077i\\ 
-0.094538-0.09780i
\end{pmatrix}\! .
\end{small}
$$
We use the procedure in Remark \ref{rmk:linearcd} below to estimate the constants $c$ and $d$ as follows:
\begin{equation}
\label{eq:cdcd}
c\,=\, 0.02546003+0.15991389i \quad {\rm and} \quad d\,=\,0.00437723+0.00078777i. 
\end{equation}
The corresponding solution $u(x,y,t)$ is complex-valued and it has singularities.
\end{example}

We close this section with an example where the KP solution is  real-valued and regular.

\begin{figure}[H] 
\begin{center} 
\includegraphics[width=0.7\textwidth]{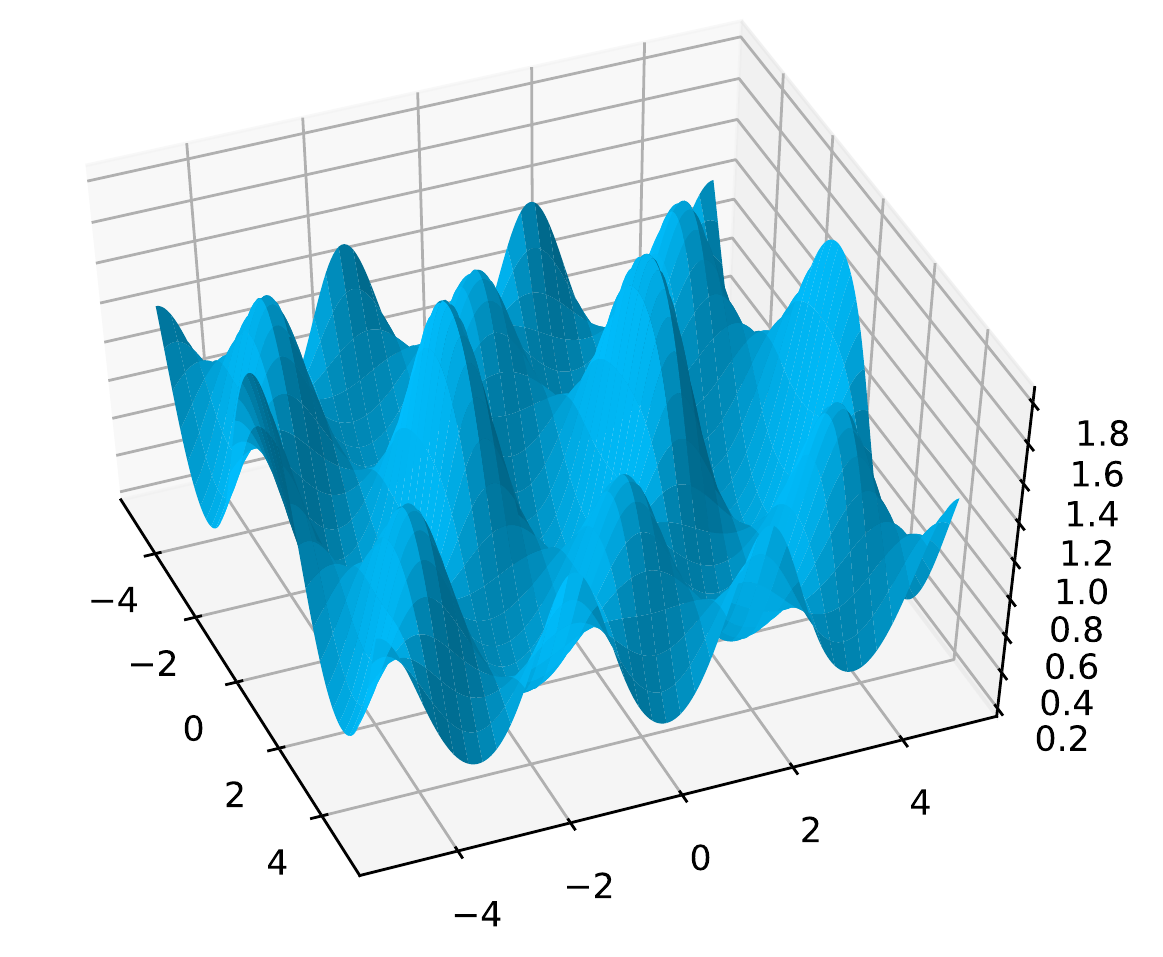}  \vspace{-0.22cm}
\caption{A wave at time $t=0$ derived from the Trott curve.}\label{fig:trott}
\end{center}
\end{figure}
\begin{example}\label{ex:Trott1}
A prominent instance of a plane quartic $C$ is the {\em Trott curve}
\begin{equation}
\label{eq:prominent}
 f(x,y)\,\, =\,\, 144(x^4+y^4)-225(x^2+y^2)+350x^2y^2+81. 
 \end{equation}
This curve is smooth. Its real picture consists of four ovals \cite[Figure 7]{struwe}.
We fix a symplectic basis of paths where the $a_i$ are purely imaginary and the $b_j$ are real. In particular, the paths $b_j$ correspond to ovals of $C$. Using this basis, together with the basis of holomorphic differentials in Example \ref{ex:Trott}, we compute the period matrices and the Riemann matrix:
$$
\begin{small}
\begin{matrix}
\Pi_a &=& \begin{pmatrix}  0.01384015942 & 0.02768031884 & 0.01384015942\\                          
0.01384015941 &           0 &          -0.01384015941 \\
0.02348847438 &           0 &          0.02348847438
 \end{pmatrix} i  \, , \smallskip \\  
B &\,=& \, 
 -2\pi\begin{pmatrix} 
 1.57412534343470 & -0.671587878369476  & -0.230949586695748\\
 -0.671587878369476  & 1.57412534206005  & -0.671587878369476\\
-0.230949586695747 & -0.671587878369476  & 1.57412534343470 \end{pmatrix}.
\end{matrix}
\end{small}
$$
We fix the point $(0,1)$ on $C$, and we compute an associated point on the Dubrovin threefold:
\begin{equation}
\label{eq:pointondubrovin} \begin{small}
\biggl(0\,, \, -\frac{1}{126}\,,\,-\frac{1}{126}\,,\,-\frac{1}{126} \,,\,0\,,\,0\,,\,0\,,\,-\frac{1550}{55566}\,,\,-\frac{1325}{37044} \biggr)
\end{small}
\,\, \in \,\, \mathcal{D}_C \,\, \subset \,\, \mathbb{WP}^8.
\end{equation}
The corresponding  KP solution $u(x,y,t)$  is real-valued and has no singularities.
The graph of the function $\,\RR^2 \rightarrow \RR,\, (x,y) \mapsto u(x,y,0)$, up to a translation, is shown in Figure \ref{fig:trott}.

Equations that define $\mathcal{D}_C$ are given towards the end of Example~\ref{ex:Trott}.
Every point $(U,V,W)$ with $U \not= 0$ that satisfies \eqref{eq:sixfortrott} gives rise to a KP solution like
Figure~\ref{fig:trott}. Note that the homogenenized Trott
quartic $f \bigl(u_1/u_3,u_2/u_3 \bigr) u_3^4$ is a linear combination
of the three polynomials in \eqref{eq:sixfortrott}. For an analytic perspective on the defining ideal of
the threefold $\mathcal{D}_C$ see Example~\ref{ex:Trott2}.
\end{example}

\section{Algebraic Implicitization for Plane Curves} \label{sec3}

This section is concerned with algebraic representations of Dubrovin threefolds.
 The parametrization described in Section \ref{sec2} will be made explicit for
  plane curves, in a form that is suitable for symbolic computations.
  This sets the stage for implicitization  \cite[\S 4.2]{INLA}.
  In Theorems \ref{thm:2quintics} and \ref{thm:quartic} we determine
the prime ideals of all implicit equations for
genus two and three respectively.   The connection to the KP equation requires a basis change 
for the holomorphic differentials, as explained in Remark \ref{rmk:nicebasis}.  
Our varieties give rise to KP solutions after that basis change.
Bearing this in mind, we can now safely omit the $\,\,\widetilde{}\,\,$ superscripts.

Let $C$ be a compact Riemann surface, represented by 
a possibly singular curve $C^o$ in the complex affine plane $\CC^2$. 
The curve $C^o$ is defined by an irreducible polynomial $f(x,y)$.
We assume that $f$ lies in $\QQ[x,y]$, i.e.~all coefficients of $f$ are rational numbers.
This ensures that the computations in this section can be carried out
symbolically over $\QQ$. Our first goal is to start from $f$ and find
a parametric representation of the Dubrovin threefold  $\mathcal{D}_C$ 
in $\mathbb{WP}^{3g-1}$.

Let  $d = {\rm degree}(f)$ and $g = {\rm genus}(C) \leq \binom{d-1}{2}$.
 We assume throughout that
$y^d$ appears with nonzero coefficient in $f$. 
The partial derivatives
of $ f$ are denoted by $f_x=\partial{f}/\partial{x}$ and $f_y=\partial{f}/\partial{y}$. 
Similarly, we write $f_{xx}, f_{xy}, f_{yy}, f_{xxx}, \ldots\,$ for higher-order derivatives.
 We often view the
$y$-coordinate on $C^o$ as a (multi-valued) function of $x$.
It is then denoted $y(x)$.  

As in~\cite[\S 3.1]{BruSijZot}, we choose polynomials
 $h_1,h_2,\ldots,h_g \in \QQ[x,y]$ such that   the following set of holomorphic differentials is a 
   basis for the complex vector space $ H^0(C, \Omega^1_C)$:
\begin{equation}
\label{eq:algbasis1}
\,\, \Big\{ \,\omega_i = H_i(x,y) dx \,\, \big| \,\, i=1,2,\ldots , g \,\Big \} \quad\,\,
{\rm where} \,\,\,\, H_i = \frac{h_i(x,y)}{f_y(x,y)}.
\end{equation}
 
\begin{example}\label{ex:planeCurve}
Let $f$ be a general polynomial of degree $d$ in $\QQ[x,y]$. The curve
$C^o$ is smooth, we have $g = \binom{d-1}{2}$,
and our Riemann surface $C$ is the closure of $C^o$ in $\PP^2$.
For the numerator polynomials $h_1,\ldots,h_g$ we take
the monomials of degree at most $d-3$.
Thus, \eqref{eq:algbasis1} becomes
\begin{equation}
\label{eq:algbasis2}
 \left\{\,  \, \frac{x^iy^j}{f_y} dx\,\,\, \big| \,\,\, 0\leq i+j \leq d-\,3 \right\} .
\end{equation}
Working modulo the principal ideal $\langle f \rangle$, we regard  $y = y(x)$ as an algebraic function in $x$.
\end{example}

We now record the following important consequence of Proposition \ref{prop:makeKPsol}.

\begin{corollary}\label{cor:AlgParam} The formulas for the column vectors $U, V, W$ in \eqref{eq:UVW},
composed with the group action in \eqref{eq:actionG}, give an algebraic
parametrization of the Dubrovin threefold $\mathcal{D}_C$.
This parametrization has coordinates in
$K[a,b,c]$, where $K $ is the field of fractions of $\,\QQ[x,y]/\langle f \rangle $.
\end{corollary}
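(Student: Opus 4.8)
The plan is to use Proposition~\ref{prop:makeKPsol} directly, after pinning down the choice of local coordinate so that every quantity in it becomes a rational function on the affine curve $C^o$. At a generic point $p=(x_0,y_0)\in C^o$ one has $f_y(p)\neq 0$: since $f$ is irreducible with $y^d$ occurring in it, $f_y$ has smaller $y$-degree than $f$ and is not divisible by $f$, so $f_y$ does not vanish identically on $C$. Hence $z:=x-x_0$ is a genuine local coordinate at $p$, and I would use this choice throughout. By the discussion around \eqref{eq:groupG}, a change of local coordinate at $p$ amounts to acting by an element of $G$, so passing to the fixed choice $z=x$ loses nothing once we also vary over $G$.

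First I would make the derivatives appearing in \eqref{eq:UVW} explicit. Viewing $y=y(x)$ as the algebraic function satisfying $f(x,y(x))\equiv 0$, implicit differentiation gives $y'=-f_x/f_y$ and
\[ y''\,=\,-\,\frac{f_{xx}+2f_{xy}\,y'+f_{yy}\,(y')^2}{f_y}, \]
both lying in $K=\operatorname{Frac}\bigl(\QQ[x,y]/\langle f\rangle\bigr)$. Since each $H_i=h_i/f_y$ is already a rational function of $x$ and $y$, the chain rule yields
\[ \dot H_i\,=\,\frac{\partial H_i}{\partial x}+\frac{\partial H_i}{\partial y}\,y' \qquad\text{and}\qquad \ddot H_i\,=\,\frac{\partial^2 H_i}{\partial x^2}+2\frac{\partial^2 H_i}{\partial x\,\partial y}\,y'+\frac{\partial^2 H_i}{\partial y^2}\,(y')^2+\frac{\partial H_i}{\partial y}\,y'', \]
again elements of $K$. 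Therefore all entries of the vectors $U,V,W$ in \eqref{eq:UVW} belong to $K$; that is, the map \eqref{eq:happymap} is given by rational functions on $C^o$ defined over $\QQ$.

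Next I would compose with the group action \eqref{eq:actionG}, treating $a\in\CC^*$ and $b,c\in\CC$ as free parameters. The resulting coordinates
\[ u_i=aU_i,\qquad v_i=2bU_i+a^2V_i,\qquad w_i=cU_i+3abV_i+a^3W_i\qquad(i=1,\dots,g) \]
are polynomials in $a,b,c$ with coefficients in $K$, hence elements of $K[a,b,c]$. By Remark~\ref{rmk:nicebasis}, $\mathcal{D}_C$ is the Zariski closure of the union of the $G$-orbits of the points $(U(p),V(p),W(p))$ as $p$ ranges over $C$, which is precisely the closure of the image of the morphism just described. Since $\mathcal{D}_C$ is an irreducible threefold by Krichever's theorem (cf.\ Proposition~\ref{prop:makeKPsol}), this is an algebraic parametrization of $\mathcal{D}_C$ with coordinates in $K[a,b,c]$, as asserted.

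The only delicate point is the bookkeeping of the local coordinate: the formulas \eqref{eq:UVW} are defined only relative to a choice of local parameter $z$, with $\dot{}$ and $\ddot{}$ denoting $d/dz$, and the content of the corollary is that the choice $z=x$ converts them into honest rational functions on $C^o$ via implicit differentiation — a step that is harmless exactly because the construction of $\mathcal{D}_C$ already takes the union over the group $G$ of all local coordinate changes. A minor caveat is that $a$ must be nonzero for \eqref{eq:actionG} to make sense, which is no obstacle since we only need a dominant parametrization.
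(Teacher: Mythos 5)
Your proposal is correct and follows essentially the same route as the paper's proof: take $x$ as the local parameter, use implicit differentiation of $f(x,y(x))=0$ to express $\dot y$, $\ddot y$ and hence $\dot H_i$, $\ddot H_i$ as elements of $K$, then compose with the $G$-action to land in $K[a,b,c]$. The extra remarks you add (non-vanishing of $f_y$ on $C$, and the $G$-orbit absorbing the choice of local coordinate) are consistent with the paper's surrounding discussion and do not change the argument.
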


\begin{proof}
The functions $H_i$ in the differentials $\omega_i$ are rational  in $x$ and $y$,
so they define elements in the function field $K$ of the curve $C$. We write $y = y(x)$,
so $x$ is our local parameter on $C$. We use dot notation for derivatives with
respect to $x$. By implicit differentiation, we find
\begin{equation}\label{eq:dotyquartic} \dot{y}\,=\,-\frac{f_x}{f_y}
\qquad {\rm and} \qquad \ddot{y} \,=\,
-\frac{f_x^2f_{yy}-2f_yf_xf_{xy}+f_y^2f_{xx}}{f_y^3}.
 \end{equation}
We similarly take derivatives of $H_i = H_i(x,y(x))$ with respect to $x$. This yields
rational formulas for  the coordinates $ H_i,\dot{H}_i, \ddot{H}_i$
of $U,V,W$. Again, we view these as elements in~$K$.~\end{proof}

\begin{example}[Smooth plane curves]\label{example:smoothplanecurves}
Let $C$ be a smooth plane curve of degree $d$ described by an affine equation $f(x,y)=0$. 
For the basis of holomorphic differentials we take (\ref{eq:algbasis2}), but with negative signs.
The $g$ coordinates of the canonical curve are elements in $K$:
$$ U \,=\, (u_1,u_2,\ldots,u_g) \,\, = \,\,
-\frac{1}{f_y} \bigl( 1,x,y,x^2,xy,\ldots, y^{d-3}\bigr). $$
The first coordinates of $V$ and $W$ are derived from those of $U$ by implicit differentiation:
	\begin{align*}
	&v_1\,=\, \dot{u}_1 \,=\,  \frac{f_yf_{xy}-f_xf_{yy}}{f_y^3},  \smallskip \\
	&w_1\,=\,\frac{\ddot{u}_1}{2} \,=\,
	\frac{-3 f_x^2 f_{yy}^2
 + f_x^2 f_y f_{yyy}
+6 f_x f_y f_{xy} f_{yy}
-2 f_x f_y^2 f_{xyy}
- 2 f_y^2 f_{xy}^2
 - f_y^2 f_{xx} f_{yy}
 + f_y^3 f_{xxy}}{2f_y^5}.
	\end{align*} 
	The other coordinates of $V$ and $W$ can be computed by applying the
	product rule and implicit differentiation to the formula
	$\,u_k = x^i y^j \cdot u_1$. We thus derive the formulas
	\begin{align*}
	&v_k=\dot{u}_k = \,\dot{u}_1x^iy^j+
	u_1 x^{i-1} y^{j-1} (iy+jx \dot{y})
	 = \frac{(f_yf_{xy}-f_xf_{yy})x^iy^j-if_y^2 x^{i-1}y^j + jf_xf_y x^i y^{j-1}}{f_y^3}. \\
	&w_k \,=\,\frac{\ddot{u}_k}{2}\, \,=\,\,\frac{\ddot{u}_1}{2}x^iy^j  \,+\,  \dot{u_1}(ix^{i-1}y^j+jx^iy^{j-1}\dot{y})
	\\ & \qquad \qquad \qquad +\frac{u_1}{2} 
	\bigl(\,i(i-1)x^{i-2}y^j+2ijx^{i-1}y^{j-1}\dot{y}+j(j-1)x^{i}y^{j-2}\dot{y}^2+jx^{i}y^{j-1}\ddot{y}\,\bigr).
	\end{align*}
All of these expressions are rational functions in $x$ and $y$ with coefficients in $\QQ$.
We regard them as elements in $K$, the field of fractions of $\QQ[x,y]/\langle f \rangle $.
 In practice, this means replacing the numerator polynomial
in $\QQ[x,y]$ and reducing it to a normal form modulo the ideal $\langle f \rangle$.	
 \end{example}

\begin{remark} \label{rmk:polypara}
In the parametrization described in Corollary \ref{cor:AlgParam}, we can 
choose all coordinates to be polynomials. Indeed, consider the formulas derived in 
Example~\ref{example:smoothplanecurves}. We see that $U,V,W$ have  the common denominators $f_y, f_y^3, f_y^5$ respectively. We can thus clear all denominators 
in $(U,V,W)$ by multiplying each coordinate by an appropriate power of $f_y$,
in a manner that does not change the corresponding point in $\mathbb{WP}^{3g-1}$.
Thereafter we apply the action of the group $G$.
We obtain a polynomial map with the
same image $\mathcal{D}_C$ in $\mathbb{WP}^{3g-1}$.
\end{remark}

In our computational experiments we started out with
 canonical curves of genus three and with
hyperelliptic curves.  The following two examples illustrate these classes of curves.

\begin{example}\label{ex:Trott}
Let $C$ be the Trott curve which was studied in Example~\ref{ex:Trott1}. 
The following three differential forms 
constitute our basis for the space $ H^0(C, \Omega^1_C)$ we chose in
 Example~\ref{ex:planeCurve}:
\begin{equation}\label{eq:differentialBasisTrott} 
\omega_1=\frac{1}{f_y} dx, \quad \omega_2=\frac{x}{f_y} dx, \quad \omega_3=\frac{y}{f_y} dx.
 \end{equation}
Note that  $u_1 = -1/f_y$, $u_2 = -x/f_y$ and $u_3 = -y/f_y$, where
$\,f_y = 700x^2y+576y^3-450y$.
 By reducing the formulas for the $V$- and $W$-coordinates in Example~\ref{example:smoothplanecurves}
modulo $f$, we obtain:
$$ \begin{small} \begin{matrix}
&v_1\,=\, 
12 (39556x^3y^2-4650x^3-13950y^2+2025x)f_y^{-3}
, \,\,\,\,\,\quad v_3= 496(638x^2-225)x f_y^{-3} ,  \qquad  \qquad \smallskip \\
& \quad \quad \quad  v_2=
4(79112x^4y^2-13950x^4-13950x^2y^2+6075x^2-3969y^2) f_y^{-3},
\end{matrix} \end{small}
$$ 
$$
\begin{small}
\begin{matrix}
w_1 \, =&
\frac{1}{3}  (450627015168x^{10}+1095273995200x^8y^2-982215036000x^8-1260877167000x^6y^2 \\
&\quad\quad+ 710159081508x^6+430938071100x^4y^2-196724295000x^4-30435203250x^2y^2 \\
&\quad\quad+11445723549x^2-5650169175y^2+2242385775) f_y^{-5} ,\smallskip \\
w_2 \,=& \frac{1}{3}(225313507584x^{11}+547636997600x^9y^2-391782402000x^9-355914999000x^7y^2 \\
&\quad\quad+ 132297850476x^7-102485711700x^5y^2+68233160700x^5+101045778750x^3y^2 \\
&\quad\quad - 45254399085x^3 - 16950507525y^2 + 6727157325) f_y^{-5}, \smallskip \\
w_3\,=& \! \! \frac{62}{3}(25236728x^6{-}14833500x^4{+}1652778x^2{+}297675)
(144x^4{+}350x^2y^2{-}225x^2{-}225y^2{+}81) y f_y^{-5}.
\end{matrix}
\end{small}
$$
To set up our implicitization problem, we first
replace the above formulas for $\,U,V,W\,$ by 
$\,f_y^2U,f_y^4V,f_y^6W\,$ in order to get rid of the denominators. Then, applying
the group action~\eqref{eq:actionG}, we write the coordinates on $\mathcal{D}_C$ as
$\,af_y^2U, \,2bf_y^2U+a^2 f_y^4V, \,cf_y^2U + 3abf_y^4V+ a^3 f_y^6W$.
These are nine polynomials in
five unknowns $a,b,c,x,y$. Regarded modulo the equation $f(x,y) = 0$,
this is the parametrization of the Dubrovin threefold $\mathcal{D}_C$
that is promised in
Remark \ref{rmk:polypara}.
The nine expressions are quite complicated. But they satisfy the
following six nice relations:
\begin{equation}
\label{eq:sixfortrott}
 \begin{small}
\begin{matrix}
450 u_1^2 u_3+450 u_2^2 u_3-324 u_3^3+u_2 v_1-u_1 v_2\,,\,\,
     700 u_1^2 u_2+576 u_2^3-450 u_2 u_3^2+u_3 v_1-u_1 v_3,\ \\
     576 u_1^3+700 u_1 u_2^2-450 u_1 u_3^2-u_3 v_2+u_2 v_3,  \\
     450 u_1 u_3 v_1+450 u_2 u_3 v_2+225 u_1^2 v_3+225 u_2^2 v_3-486 u_3^2 v_3+u_2 w_1-u_1 w_2, \\
     700 u_1 u_2 v_1+350 u_1^2 v_2+864 u_2^2 v_2-225 u_3^2 v_2-450 u_2 u_3 v_3+u_3 w_1-u_1 w_3, \\
     864 u_1^2 v_1+350 u_2^2 v_1-225 u_3^2 v_1+700 u_1 u_2 v_2-450 u_1 u_3 v_3-u_3 w_2+u_2 w_3.
\end{matrix}     
\end{small}
 \end{equation}
 These are the six relations in
\eqref{eq:minorsderivatives} and
\eqref{eq:minorsderivatives2}.
After saturation, as explained in 
Theorem~\ref{thm:quartic}, these generate the prime ideal of $\mathcal{D}_C$.
All computations are done using Gr\"obner bases over~$\mathbb{Q}$.
\end{example}

We illustrate the situation for hyperelliptic curves in the most basic case of genus two.

\begin{example}\label{ex:cuge2} Let $C$ be a general curve of genus two. It is defined
by an equation
$$y^2 = F(x)\, , \quad {\rm where} \quad F(x)=(x-a_1)(x-a_2)\cdots(x-a_6) 
\quad {\rm with} 	\,\,	a_i\in \mathbb{C}.	$$ 
A basis for the space of differential forms on the hyperelliptic curve $C$ consists of
\begin{equation}
\label{eq:genus2basis}
 \omega_1 \,=\, \frac{1}{\sqrt{F}}dx  \quad {\rm and}
\quad \omega_2 \,=\, \frac{x}{\sqrt{F}}dx .
\end{equation}
We take the first and second $x$-derivative of 	their coefficients.
According to the formulas of \eqref{eq:UVW}, the resulting formula
for the point $(U,V,W) = (u_1,u_2,v_1,v_2,w_1,w_2)$ in $\mathbb{WP}^5$ equals
\begin{equation}
\label{eq:sixtuple}
\left(-\frac{1}{\sqrt{F}},\, -\frac{x}{\sqrt{F}}, \,\frac{F'}{2F\sqrt{F}},\,
\frac{xF'-2F}{2F\sqrt{F}},\,\frac{2FF''-3(F')^2}{8F^2\sqrt{F}},\,
\frac{2xFF''+4FF'-3x(F')^2}{8F^2\sqrt{F}}\right).
\end{equation}
This provides the algebraic parametrization in Corollary~\ref{cor:AlgParam}. In particular, 
if we go back to Example~\ref{ex:running},  then the parametrization in \eqref{eq:sixtuple} becomes the one presented in equation~\eqref{eq:running3}.
\end{example}

We conclude this section with two general theorems 
on the prime ideals defining~$\mathcal{D}_C$. The first one, Theorem \ref{thm:2quintics},
 explains the relations we saw in \eqref{eq:running1}.
Here we work in the polynomial ring $\CC[u_1,u_2,v_1,v_2,w_1,w_2]$ with the grading
 ${\rm deg}(u_1) = {\rm deg}(u_2) = 1$,
 ${\rm deg}(v_1) = {\rm deg}(v_2) = 2$, and
 ${\rm deg}(w_1) = {\rm deg}(w_2) = 3$. The group $G$ acts on
 this ring via \eqref{eq:actionG}. The following  two 
 polynomials of degree five
 are invariant (up to scaling by $a^5$) under this $G$-action:
	$$ \begin{matrix} 
	\qquad I_1 \,=\,   2u_2^2 w_1 - 2u_1 u_2 w_2 + 3 u_1 v_2^2  - 3 u_2 v_1 v_2,\\
	\qquad I_2 \,=\,   2u_1^2 w_2 - 2u_1 u_2 w_1 + 3 u_2 v_1^2  - 3 u_1 v_1 v_2 . 
\end{matrix}
$$

The tuple \eqref{eq:sixtuple} parametrizes an
 algebraic curve in $\mathbb{C}^6$.
The orbit of this curve under the group $G$ is a $4$-dimensional
variety in $\mathbb{C}^6$. Its image
in $\mathbb{WP}^5$ is the Dubrovin threefold $\mathcal{D}_C$.
We write $\,\bar F(u_1,u_2) \,=\, F(u_2/u_1) u_1^6\,$
for the binary sextic obtained by homogenizing $F(x)$.

\begin{theorem} \label{thm:2quintics}
Let $C$ be a genus two curve, represented
by a sextic $F(x)$ as in Example \ref{ex:cuge2}.
	There are two linearly independent quintics that vanish on the
	Dubrovin threefold $\mathcal{D}_C$:
	\begin{equation}
	\label{eq:twoquintics}
	\partial \bar F / \partial u_1 \,-\, 2 I_1  \qquad
	{\rm and} \qquad
	\partial \bar F / \partial u_2 \,-\, 2 I_2 . 
	\end{equation}
	The prime ideal of the Dubrovin threefold is minimally generated
	by five polynomials, of degrees $5,5,8,9,10$. This ideal
	is obtained from that generated by (\ref{eq:twoquintics}) via
	saturating  $\langle u_1,u_2 \rangle$.
\end{theorem}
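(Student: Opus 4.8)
The plan is to prove the theorem in three stages: first verify that the two quintics in \eqref{eq:twoquintics} actually vanish on $\mathcal{D}_C$, then show the saturation of $\langle \partial \bar F/\partial u_1 - 2I_1,\ \partial \bar F/\partial u_2 - 2I_2\rangle$ with respect to $\langle u_1, u_2\rangle$ is prime of the correct dimension, and finally identify the five minimal generators by a Gröbner-basis computation. For the first stage I would work directly with the parametrization. By Corollary~\ref{cor:AlgParam} and Example~\ref{ex:cuge2}, the threefold $\mathcal{D}_C$ is the image of the $G$-orbit of the curve \eqref{eq:sixtuple}; since the quintics $\partial\bar F/\partial u_i - 2I_i$ are $G$-invariant up to the scalar $a^5$ (the invariance of $I_1, I_2$ is asserted in the excerpt, and $\partial\bar F/\partial u_i$ scales by $a^5$ because $\bar F$ has weighted degree $6$ and $\partial/\partial u_i$ lowers it by $1$), it suffices to check that they vanish on the curve \eqref{eq:sixtuple} itself, i.e. after substituting $u_1 = -1/\sqrt F$, $u_2 = -x/\sqrt F$, and the corresponding $v_i, w_i$. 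This is a direct substitution: $\partial\bar F/\partial u_1$ and $\partial\bar F/\partial u_2$ evaluate to explicit rational functions in $x$ and $F, F', F''$ (using Euler's relation $u_1 \partial\bar F/\partial u_1 + u_2\partial\bar F/\partial u_2 = 6\bar F$ and the chain rule $\partial\bar F/\partial u_2 = F'(u_2/u_1)u_1^5$), and one checks these match $2I_1, 2I_2$ evaluated on \eqref{eq:sixtuple}. This is the computational heart but is routine once set up.

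For the second stage, let $J = \langle \partial\bar F/\partial u_1 - 2I_1,\ \partial\bar F/\partial u_2 - 2I_2\rangle$ and let $P$ be the homogeneous prime ideal of $\mathcal{D}_C$. The inclusion $J \subseteq P$ follows from stage one, hence $J : \langle u_1, u_2\rangle^\infty \subseteq P$ as well since $P$ is prime and $\langle u_1, u_2\rangle \not\subseteq P$ (the point $p$ can be chosen with $H_1(p) \neq 0$, so $\mathcal{D}_C$ is not contained in $\{u_1 = u_2 = 0\}$). For the reverse inclusion I would argue that $V(J) \setminus V(u_1, u_2)$ is irreducible of dimension $3$ and equals $\mathcal{D}_C$ away from that locus, so that the saturation, being the ideal of the closure of $V(J)\setminus V(u_1,u_2)$, is exactly $P$. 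Concretely: on the open set $u_1 \neq 0$ one can solve the two quintic equations for $w_1, w_2$ as polynomials in $u_1, u_2, v_1, v_2$ (the coefficient matrix of $w_1, w_2$ in $(I_1, I_2)$ is $\begin{pmatrix} u_2^2 & -u_1u_2 \\ -u_1u_2 & u_1^2\end{pmatrix}$, which is singular — so one must instead use the combination that is invertible, or pass to the chart after a $G$-normalization setting $v_1 = v_2 = 0$, reducing to the canonical curve picture). After normalizing by $G$ to a slice, $V(J)$ restricted to the slice should cut out exactly the canonical image of $C$ inside the rational normal curve, which is irreducible of dimension one; adding back the three $G$-directions gives dimension four in $\CC^6$, i.e. dimension three in $\mathbb{WP}^5$.

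The main obstacle is precisely this dimension/irreducibility count for the saturation: two quintics in $\CC^6$ generically cut out a fourfold, so $V(J)$ has the expected dimension, but one must rule out extra components supported off $\{u_1 = u_2 = 0\}$ and confirm the residual component is irreducible and reduced. I expect the cleanest route is to exhibit $V(J) \cap \{u_1 \neq 0\}$ as a $G$-bundle (an affine-space bundle with fiber $G \cong \CC^* \times \CC^2$) over the affine canonical curve $C \setminus \{u_1 = 0\}$, via the explicit $G$-action \eqref{eq:actionG}: solving $\partial\bar F/\partial u_i - 2I_i = 0$ determines, on each $G$-orbit, a unique point of the lifted canonical curve, so the projection $V(J)\cap\{u_1\neq 0\} \to \{\text{canonical curve}\}$ is surjective with irreducible fibers, forcing irreducibility. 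The third stage — that $P$ is minimally generated in degrees $5,5,8,9,10$ — is then settled by a direct Gröbner basis computation over $\QQ$ for a single generic sextic $F$ (semicontinuity gives the general case), exactly as in Example~\ref{ex:running}; this is a verification rather than an argument, and I would present it as such, citing the computation.
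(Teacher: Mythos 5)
Your proposal follows the same overall strategy as the paper: verify by direct substitution of \eqref{eq:sixtuple} that the two quintics vanish on the lifted canonical curve (the paper does this by computer algebra over $\QQ(a_1,\dots,a_6)$, using the same reduction via relative $G$-invariance that you make explicit), then saturate by $\langle u_1,u_2\rangle$ and determine the minimal generators computationally. The one point where you genuinely diverge is primality of the saturated ideal: the paper settles this with a \texttt{Macaulay2} \texttt{isPrime} call, whereas you sketch a geometric fibration argument. Your sketch stalls exactly where you flag it — the $w$-coefficient matrix of $(I_1,I_2)$ is singular — but it can be completed, and this is worth recording. Writing $s=u_2w_1-u_1w_2$ and $t=u_1v_2-u_2v_1$, one has $I_1=2u_2s+3v_2t$ and $I_2=-2u_1s-3v_1t$, so the Euler combination $u_1\cdot(\partial\bar F/\partial u_1-2I_1)+u_2\cdot(\partial\bar F/\partial u_2-2I_2)=6\bigl(\bar F-t^2\bigr)$ shows that on $V(J)$ the triple $(u_1,u_2,t)$ lies on the irreducible affine surface $\{t^2=\bar F(u_1,u_2)\}$, the cone over the hyperelliptic curve. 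Where $u_1\neq0$, the second quintic then determines $s$ linearly from $(u_1,u_2,v_1,t)$, the first quintic becomes redundant, $v_2$ is recovered from $(v_1,t)$, and $(w_1,w_2)$ ranges over the affine line $u_2w_1-u_1w_2=s$; hence $V(J)\cap\{u_1\neq0\}$ is an affine $\CC^2$-bundle over that surface, so it is irreducible, reduced, and four-dimensional in $\CC^6$. This yields primality of the saturation without a black-box computation — note it also uses, implicitly, that $\mathcal{D}_C=\mathcal{D}_C^-$ (the quintics are even in $V$), which holds here because the hyperelliptic involution realizes $(U,V,W)\mapsto(U,-V,W)$. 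The remaining claim about the degrees $5,5,8,9,10$ of the five minimal generators is, in both your write-up and the paper, a Gr\"obner basis computation for generic $F$.
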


\begin{proof}
This is verified by a computer algebra over the field $L = \QQ(a_1,a_2,\ldots,a_6)$.
The coordinates of the parametrization \eqref{eq:sixtuple} live in $L[x,y]/\langle y^2 - F(x) \rangle $,
and we check that \eqref{eq:twoquintics} vanishes when we substitute
\eqref{eq:sixtuple} for $(U,V,W)$. The two equations in (\ref{eq:twoquintics})
define a variety of codimension two in $\mathbb{WP}^5$. The irreducible threefold
$\mathcal{D}_C$ must be one of its irreducible components. We saturate \eqref{eq:twoquintics} 
by $\langle u_1,u_2 \rangle$, thus removing an extraneous nonreduced component on $\{U= 0\}$.
The result of this saturation is an ideal with five minimal generators of degrees 
$5,5,8,9,10$. A further computation in {\tt Macaulay2} verifies that this ideal is prime. 
This implies that the irreducible threefold defined by this prime ideal must be equal to $\mathcal{D}_C$.
\end{proof}

We next turn to genus three. Assuming that $C$ is non-hyperelliptic, its canonical
model is a quartic in $\PP^2$ with coordinates $U=(u_1,u_2,u_3)$.
We seek the prime ideal of the  Dubrovin threefold $\mathcal{D}_C$ in $\mathbb{WP}^8$.
This ideal lives in the polynomial ring $\CC[U,V,W]$ in nine variables.
Here $\CC$ can be replaced by the subfield over which $C$ is~defined.
For us, this is usually $\QQ$.
The next theorem explains the relations in~\eqref{eq:sixfortrott}.
It should be compared with Lemma \ref{lem:trailingterms}.

\begin{theorem}\label{thm:quartic}
Let $C$ be a smooth algebraic curve of genus three given by a ternary quartic $f(u_1,u_2,u_3)$. The prime ideal of its Dubrovin threefold is minimally generated by 17 polynomials  in $\CC[U,V,W]$. These have degrees 
$3,3,3,4,4,4,5,5,5,8,8,9,9,10,10,11,12$. The first six generators suffice 
up to saturation by $\langle u_1,u_2,u_3 \rangle$. The three cubic ideal generators are 
\begin{equation}\label{eq:minorsderivatives}
\frac{\partial f}{\partial u_1} \, + \, \begin{vmatrix} u_2 & v_2 \\ u_3 & v_3 \end{vmatrix}, 
\qquad \frac{\partial f}{\partial u_2}\, - \,\begin{vmatrix} u_1 & v_1 \\ u_3 & v_3 \end{vmatrix}, 
\qquad \frac{\partial f}{\partial u_3}\, + \,\begin{vmatrix} u_1 & v_1 \\ u_2 & v_2 \end{vmatrix}.
\end{equation}
Fixing the quintic $g = v_1 \cdot {\partial{f}}/{\partial u_1} + 
v_2 \cdot {\partial{f}}/{\partial u_2} + v_3 \cdot {\partial{f}}/{\partial u_3}$,
 the three quartic generators~are 
\begin{equation}\label{eq:minorsderivatives2}
\frac{\partial g}{\partial u_1}  \,+\, 2\begin{vmatrix} u_2 & w_2 \\ u_3 & w_3 \end{vmatrix}, 
\qquad \frac{\partial g}{\partial u_2} \,- \,2 \begin{vmatrix} u_1 & w_1 \\ u_3 & w_3 \end{vmatrix}, 
\qquad \frac{\partial g}{\partial u_3} \,+ \,2\begin{vmatrix} u_1 & w_1 \\ u_2 & w_2 \end{vmatrix}.
\end{equation}
\end{theorem}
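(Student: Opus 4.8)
The strategy mirrors the genus-two argument in Theorem~\ref{thm:2quintics}, but now there are two ``derivative-of-a-derivative'' layers to unwind. The key structural observation is that the cubic generators in \eqref{eq:minorsderivatives} are exactly the Euler-type syzygies forced on the canonical curve: the $U$-coordinates satisfy $f(U)=0$ along $\mathcal{D}_C$, and differentiating this relation with respect to the local parameter $x$ produces $\sum_i (\partial f/\partial u_i)\dot u_i = 0$, i.e.\ $\sum_i (\partial f/\partial u_i) v_i = 0$ up to the $G$-twist. The three cubics in \eqref{eq:minorsderivatives} are the components of the vector identity obtained by combining this first derivative with the $2\times 2$ minors $u_iv_j - u_jv_i$; concretely one checks that on the parametrization \eqref{eq:UVW} (with the group action \eqref{eq:actionG} applied), each expression $\partial f/\partial u_i \pm (u_jw_k - \text{$\cdots$})$ — here with $v$'s, not $w$'s — vanishes identically in $K[a,b,c]$. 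Similarly, $g$ is (a multiple of) the first $x$-derivative of $f(U)$ after clearing, and differentiating once more yields the three quartics in \eqref{eq:minorsderivatives2}, now involving the $W$-coordinates through the $2\times 2$ minors $u_iw_j - u_jw_i$. So the first step is a direct symbolic verification, over $\QQ$ (or over the ground field of $f$), that the six polynomials in \eqref{eq:minorsderivatives}–\eqref{eq:minorsderivatives2} vanish under the substitution from Example~\ref{example:smoothplanecurves} followed by the $G$-action.

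Second, I would argue that the ideal $J$ generated by these six polynomials cuts out $\mathcal{D}_C$ set-theoretically away from the locus $\{U=0\}$, and that after saturation by $\langle u_1,u_2,u_3\rangle$ one lands inside the prime ideal of $\mathcal{D}_C$. For this, note that the three cubics, reduced modulo the homogenized quartic $f(U)$, determine $V$ as a function of $U$ up to the $G$-ambiguity (they express the $2\times2$ minors $u_iv_j - u_jv_i$ in terms of the $\partial f/\partial u_i$, which on the canonical curve recovers $V$ projectively from $U$ wherever $U\ne 0$); likewise the three quartics then pin down $W$. Hence the common zero locus of $J$, intersected with $\{U\ne 0\}$, is precisely the $G$-orbit of the lifted canonical curve, which is $\mathcal{D}_C$ minus $\{U=0\}$. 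Since $\mathcal{D}_C$ is irreducible of dimension three and $J$ vanishes on it, $\mathcal{D}_C$ is a component of $V(J)$; saturating by $\langle u_1,u_2,u_3\rangle$ removes the extraneous components supported on $\{U=0\}$.

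Third, the precise claim that the saturated ideal has exactly $17$ minimal generators in degrees $3,3,3,4,4,4,5,5,5,8,8,9,9,10,10,11,12$, and that it is prime, is established by an explicit Gröbner-basis computation in \texttt{Macaulay2}, carried out generically — that is, with the ten coefficients of the ternary quartic $f$ treated as indeterminates, or over $\QQ$ for a Zariski-dense family of specializations (the Trott curve of Example~\ref{ex:Trott} being one instance where the calculation was run). One computes the saturation $(J : \langle u_1,u_2,u_3\rangle^\infty)$, reads off a minimal generating set and its degrees, and verifies primality (e.g.\ via the dimension and the codimension of the singular locus, or directly). Because $\mathcal{D}_C$ is irreducible and contained in the zero set of a prime ideal of the correct dimension, the two must coincide, which also retroactively confirms that the six polynomials in \eqref{eq:minorsderivatives}–\eqref{eq:minorsderivatives2} generate the prime ideal up to saturation.

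**Main obstacle.** The conceptual content — identifying \eqref{eq:minorsderivatives} and \eqref{eq:minorsderivatives2} as the first and second prolongations of the canonical equation $f(U)=0$ — is clean, and the set-theoretic argument on $\{U\ne 0\}$ is routine. The genuine difficulty is the generic Gröbner-basis computation: verifying the exact minimal generator count and degree sequence $(3,3,3,4,4,4,5,5,5,8,8,9,9,10,10,11,12)$ and proving primality over a function field in the coefficients of $f$ is computationally heavy, and one must be careful that the specialization to a concrete quartic (e.g.\ Trott) does not accidentally drop or merge generators — so the honest statement is that this was checked both symbolically in the generic case and numerically/symbolically for explicit curves, and that the degree pattern is stable. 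A secondary subtlety is checking that no saturation beyond $\langle u_1,u_2,u_3\rangle$ is needed, i.e.\ that there are no extraneous components off $\{U=0\}$; this follows from the dimension count but should be confirmed in the computation.
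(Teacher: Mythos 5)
Your proposal is correct and takes essentially the same route as the paper: a direct symbolic/geometric verification that the six polynomials in \eqref{eq:minorsderivatives}--\eqref{eq:minorsderivatives2} vanish on the parametrized threefold (the cubics expressing the gradient of $f$ as the cross product of $U$ and $V$, the quartics arising by differentiating once more), followed by the observation that away from $\{U=0\}$ these equations determine $V$ and then $W$ linearly over the canonical curve, giving an irreducible threefold that must equal $\mathcal{D}_C$. The paper likewise delegates the exact count of $17$ minimal generators, their degrees, and primality to a Gr\"obner-basis computation with generic $f$, so your identification of that step as the computational bottleneck matches the paper's treatment.
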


The cubics (\ref{eq:minorsderivatives}) imply that
the quartic $f(u_1,u_2,u_3)$  is in the ideal. This
is consistent with the general theory (cf.~\cite[\S 4.3]{Dub81}) since 
the quartic $\{f=0\}$ in~$\PP^2$ is a canonical curve.

\begin{proof}
Equations \eqref{eq:minorsderivatives}
and \eqref{eq:minorsderivatives2} can be proved
by a direct computation for a general quartic with
indeterminate coefficients. That symbolic computation is facilitated by
the fact that these equations are invariant under the
action by ${\rm PGL}(3,\CC)$ on the vectors $U,V,W$ and the
induced action on quartics in $u_1,u_2,u_3$. So, it suffices
to check a six-dimensional family of quartic curves
that represents a Zariski dense set of  orbits.
Alternatively,   \eqref{eq:minorsderivatives}
can be derived geometrically
by examining the meaning of the vectors $U,V$ in $\CC^3$.
The vector $U$ represents a point on the canonical curve which
we identify with $C$ itself. To understand the meaning of $V$,
we examine the parametrization of $\mathcal{D}_C$ given in \eqref{eq:UVW}.
The plane spanned by $U$ and $V$ in $\mathbb{C}^3$ corresponds to the tangent line
 to $C$ at the point $U$. By Cramer's rule, this implies that the gradient of $f$ at $U$
 equals up to a scalar multiple the vector whose three
 coordinates are the determinants on the right hand side of 
 \eqref{eq:minorsderivatives}. A computation reveals that 
 this scalar multiple  equals $1$. This explains \eqref{eq:minorsderivatives}, and then \eqref{eq:minorsderivatives2}
 follows from the proof of Lemma \ref{lem:trailingterms}.
 
The variety defined by our equations in $\mathbb{WP}^8$ is
irreducible of dimension three
outside the locus defined by $\langle u_1,u_2,u_3 \rangle$.
This follows immediately from the structure of the equations. 
First, we have an irreducible curve in the $\PP^2$ with coordinates
$U$. For every point on that curve,
 \eqref{eq:minorsderivatives} gives two independent linear equations
 in $V$. After choosing $U$ and $V$, \eqref{eq:minorsderivatives} gives two independent linear equations
 in $W$. Thus there are three degrees of freedom, and our variety is irreducible
 since the equations over $C$ are linear.
 The precise number and degrees of minimal generators for the
 prime ideal are obtained by computations with generic~$f$.
 \end{proof}
 
 The next section offers an alternative numerical view on implicitizing Dubrovin threefolds.
A conceptual explanation of Theorem~\ref{thm:quartic}, valid for arbitrary genus $g$,
appears in Theorem~\ref{thm:initialideal}.

\section{Transcendental Implicitization} \label{sec4}

Our workhorse in this section is the Riemann theta function $\theta({\bf z}\,|\,B)$, defined by its
series expansion in \eqref{eq:RTFreal}. We chose the Riemann matrix $B$
 as in \cite{Dub81}, with negative definite real~part.
This differs by a factor $2 \pi i$ from the Riemann matrix
 in the algebraic geometry literature. The latter is also used in the
 {\tt Julia} package~\cite{julia}.
This section builds on \cite[\S IV.2]{Dub81}. We show how theta series lead to
polynomials in $\CC[U,V,W]$ that vanish on the Dubrovin threefold of
a curve $C$ of genus $g$ with Riemann matrix $B$. A point $(U,V,W)$
lies in this threefold  if and only if the $\uptau$-function of \eqref{eq:thetatau} satisfies Hirota's bilinear relation \eqref{eq:hirota} for some $c,d\in \mathbb{C}$ and any $D\in \mathbb{C}^g$.
In this definition, by the Dubrovin threefold we mean the union $\mathcal{D}_C \cup \mathcal{D}_C^{-}$
referred to in Remark \ref{rem:theothercomponent}. Hence, the irreducible
threefold $\mathcal{D}_C$, studied algebraically in Section~\ref{sec3},  occurs in 
two different sign copies in the variety defined here.  For what follows we prefer:

\begin{definition}[Big Dubrovin threefold]
	Let $\mathbb{WP}^{3g+1}$ be the weighted projective space 
	with coordinates $(U,V,W,c,d)$ where the new variables $c,d$ have degrees $2$ and $4$ respectively.  The \emph{big Dubrovin threefold} $\mathcal{D}^{\rm big}_C$ is the set of points in $\mathbb{WP}^{3g+1}$ such that 
	Hirota's bilinear relation \eqref{eq:hirota}
	is satisfied for the function $\uptau(x,y,t)=\theta(Ux+Vy+Wt+D)$, where $D\in \mathbb{C}^g$ is arbitrary. 
\end{definition}

We begin by showing that the big Dubrovin threefold is indeed an algebraic variety.
Given any ${\bf z}$ in $\CC^g$, we write $\theta({\bf z}) = \theta({\bf z} | B)$ 
for the complex number on the right in (\ref{eq:RTFreal}).
We set $\partial_U := u_1 \frac{\partial}{\partial z_1} + \cdots + u_g \frac{\partial}{\partial z_g}$,
and we define $\partial_U \theta ({\bf z})$ to be the value at ${\bf z}$ of that directional
derivative of the theta function. This value is a linear form in $U $ with complex coefficients.
We similarly define 
$\partial_V \theta ({\bf z})$ and $\partial_W \theta ({\bf z})$. We also
consider the values of higher order derivatives like
$\,\partial_U^4\theta({\bf z}),  \partial^2_V\theta({\bf z}), \partial_U\partial_W \theta({\bf z})$.
These are homogeneous polynomials of degree four in $\CC[U,V,W]$.

For any fixed vector $\mathbf{z}\in\mathbb{C}^g$, we define the \emph{Hirota quartic} $H_{\bf z}$
 to be the expression
\begin{multline}\label{eq:hirotaquartics}
 \left(\partial_U^4\theta({\bf z}) \cdot \theta({\bf z})-4\partial^3_U\theta({\bf z})\cdot \partial_U\theta({\bf z})+3\{\partial^2_U\theta({\bf z})\}^2\right)+4\cdot \left(\partial_U\theta({\bf z})\cdot \partial_W\theta({\bf z})-\theta({\bf z}) \cdot\partial_U\partial_W \theta({\bf z})\right)\\+6c\cdot \left( \partial^2_U\theta({\bf z})\cdot \theta({\bf z}) - \{\partial_U\theta({\bf z})\}^2 \right) + 3\cdot \left(\theta({\bf z})\cdot \partial^2_V\theta({\bf z})-\{\partial_V\theta({\bf z})\}^2 \right) +8d\cdot \theta({\bf z})^2.
\end{multline}	
This is a homogeneous quartic in the polynomial ring $\CC[U,V,W,c,d]$, where $c$  and $d$ have degrees $2$ and~$4$
respectively.
The coefficients of the Hirota quartic $ H_{\bf z}(U,V,W,c,d)$ depend on the values of 
the theta function $\theta$ and its partial derivatives at ${\bf z}$. The zero set of $H_{\bf z}$
is in the weighted projective space $\mathbb{WP}^{3g+1}$.
We consider the intersection of these hypersurfaces.

\begin{proposition}\label{prop:bigDubrovin}
The big Dubrovin threefold $\mathcal{D}^{\rm big}_C$ is the intersection 
in $\mathbb{WP}^{3g+1}$  of the  algebraic hypersurfaces defined by the
Hirota quartics $H_{\bf z}$, as $\mathbf{z}$
runs over all vectors in $ \mathbb{C}^g$.  
\end{proposition}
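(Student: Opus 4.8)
The plan is to unwind the definitions on both sides and observe that the equivalence is essentially tautological once the Taylor expansion of $\uptau$ is understood. First I would fix an arbitrary vector $D\in\CC^g$ and regard the function
$$
\uptau(x,y,t)\,=\,\theta(Ux+Vy+Wt+D\,|\,B)
$$
as a composition of the entire function $\theta$ with the affine map $(x,y,t)\mapsto Ux+Vy+Wt+D$. By the chain rule, the partial derivatives of $\uptau$ with respect to $x,y,t$ that appear in Hirota's bilinear form \eqref{eq:hirota} are exactly the directional derivatives $\partial_U,\partial_V,\partial_W$ of $\theta$ applied at the point ${\bf z}=Ux+Vy+Wt+D$. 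Concretely, $\uptau_x=\partial_U\theta({\bf z})$, $\uptau_{xx}=\partial_U^2\theta({\bf z})$, and so on, and $\uptau_t=\partial_W\theta({\bf z})$, $\uptau_{xt}=\partial_U\partial_W\theta({\bf z})$, $\uptau_{yy}=\partial_V^2\theta({\bf z})$. Substituting these identities into the left-hand side of \eqref{eq:hirota} shows that the value of Hirota's expression at $(x,y,t)$ is precisely the Hirota quartic $H_{\bf z}(U,V,W,c,d)$ evaluated at ${\bf z}=Ux+Vy+Wt+D$.

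With this identification in hand, the statement becomes: a point $(U,V,W,c,d)$ lies in $\mathcal{D}^{\rm big}_C$ if and only if $H_{\bf z}(U,V,W,c,d)=0$ for every ${\bf z}\in\CC^g$. For the forward direction, if $(U,V,W,c,d)\in\mathcal{D}^{\rm big}_C$ then \eqref{eq:hirota} holds for every choice of $D$ and at every $(x,y,t)$; taking $x=y=t=0$ and letting $D$ range over all of $\CC^g$ shows $H_{\bf z}=0$ for every ${\bf z}$, since ${\bf z}=D$ already sweeps out $\CC^g$. For the converse, if $H_{\bf z}\equiv 0$ for all ${\bf z}$, then for any fixed $D$ and any $(x,y,t)$ the point ${\bf z}=Ux+Vy+Wt+D$ lies in $\CC^g$, so Hirota's expression vanishes there; hence $\uptau$ satisfies \eqref{eq:hirota} identically, which is the defining condition for membership in $\mathcal{D}^{\rm big}_C$. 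I would also note that compatibility with the weighted projective structure is automatic: under the scaling that defines $\mathbb{WP}^{3g+1}$, rescaling ${\bf z}$ appropriately shows $H_{\lambda\cdot(U,V,W,c,d)}$ is a scalar multiple of $H_{\bf z}$, so the common zero locus is well-defined in $\mathbb{WP}^{3g+1}$.

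The only genuinely substantive point, and the one I would treat carefully rather than waving through, is the legitimacy of differentiating the theta series \eqref{eq:RTFreal} term by term and the claim that $\theta$ and all its derivatives are entire. This follows from the standard fact that the series for $\theta({\bf z}\,|\,B)$ converges absolutely and uniformly on compact subsets of $\CC^g$ when $B$ has negative definite real part — the Gaussian factors $\exp(\tfrac12{\bf u}^TB{\bf u})$ decay superexponentially in ${\bf u}$ — so $\theta$ is holomorphic and its partial derivatives are obtained by differentiating under the summation sign. Granting this, each $\partial_U^k\partial_V^\ell\partial_W^m\theta({\bf z})$ is a well-defined holomorphic function of ${\bf z}$, homogeneous of degree $k+\ell+m$ in the entries of $U,V,W$, and the substitution step above is rigorous. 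I therefore expect the proof to be short: the conceptual content is the chain-rule bookkeeping that matches the four groups of terms in \eqref{eq:hirota} with the four groups of terms in \eqref{eq:hirotaquartics}, and the main (mild) obstacle is simply recording the analytic facts about $\theta$ that justify passing derivatives inside the sum.
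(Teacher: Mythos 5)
Your proposal is correct and follows essentially the same route as the paper: expand Hirota's bilinear form via the chain rule to identify it with $H_{\bf z}$ at ${\bf z}=Ux+Vy+Wt+D$, then note that arbitrariness of $D$ makes ${\bf z}$ arbitrary. The extra remarks on term-by-term differentiation of the theta series and on weighted homogeneity (each $H_{\bf z}$ is weighted-homogeneous of degree four, so no rescaling of ${\bf z}$ is even needed) are fine but not part of the paper's short argument.
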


\begin{proof}
If we expand the left hand side of Hirota's bilinear relation (\ref{eq:hirota}), then
we obtain the expression $H_{\bf z}(U,V,W,c,d)$ for ${\bf z} = Ux+Vy+Wz+D$. 
By definition, a point $(U,V,W,c,d) \in \mathbb{WP}^{3g+1}$ belongs to $\mathcal{D}^{\rm big}_C$ 
if and only if this expression is zero for all $x,y,t \in \CC$ and all $D\in \mathbb{C}^g$. 
Since $D$ is arbitrary in $\mathbb{C}^g$, the value of $\mathbf{z}=Ux+Vy+Wt+D$ is also arbitrary.
This implies that $\mathcal{D}^{\rm big}_{C}$ is the intersection of all the 
hypersurfaces $\{H_\mathbf{z} = 0 \}$ as $\mathbf{z}$ ranges over $\mathbb{C}^g$.
\end{proof}

For any ${\bf z} \in \CC^g$, we can compute the Hirota quartic $H_{\bf z}$
using numerical software for evaluating theta functions and their derivatives.
The state of the art for such software is the {\tt Julia} package introduced in \cite{julia}.
This was used for the computations reported in this section.
These  differ greatly from the exact symbolic computations
reported in Section~\ref{sec3}.

One drawback of Proposition \ref{prop:bigDubrovin} is
that the number of Hirota quartics is infinite.
We next derive  a finite set of equations for $\mathcal{D}^{\rm big}_{C}$ via
the addition formula for theta functions with characteristics. 
This derivation was explained by Dubrovin in \cite[\S IV.1]{Dub81}. 

A \emph{half-characteristic} is an element $\varepsilon \in (\mathbb{Z}/2\mathbb{Z})^g$ 
which we see as a vector with $0,1$ entries.  
 Given any two half-characteristics $\varepsilon,\delta \in (\mathbb{Z}/2\mathbb{Z})^g$, 
 their \emph{theta function with characteristic}  is
  \begin{equation}
\label{eq:thetaFunctionChar}
\theta\begin{bmatrix} \varepsilon \\ \delta \end{bmatrix}({\bf z}\, |\, B)\,\,\, = \,\,\,
\sum_{{\bf u} \in \mathbb{Z}^g} {\rm exp} \left( \frac{1}{2} ({\bf u}+\varepsilon)^T B ({\bf u}+\varepsilon) + ({\bf z} + 2\pi i \delta)^T B ({\bf u} + \varepsilon) \right).
\end{equation}
When $\varepsilon=\delta=0$, this function is precisely the Riemann theta function \eqref{eq:RTFreal}. 
For $\delta = 0$ but arbitrary $\varepsilon $,
we consider \eqref{eq:thetaFunctionChar} with the doubled period matrix~$2B$.
This is abbreviated by
\begin{equation}
\label{eq:doubled}
\hat{\theta}[\varepsilon]({{\bf z}})\,\, :=\,\, {\theta}\begin{bmatrix} \varepsilon \\ 0 \end{bmatrix}({\bf z}\, |\, 2B) .
\end{equation}
We are interested in the values $\,\hat{\theta}[\varepsilon](0)\,$
of these $2^g$ functions at ${\bf z}=0$. For fixed $B$, these are
complex numbers, known as \emph{theta constants}.
We use the term theta constant also for evaluations at ${\bf z}=0$ of derivatives of (\ref{eq:doubled}) 
with the vector fields $\partial_U$, $\partial_V$, $\partial_W$ as above.
With these conventions, the following expression is a polynomial of degree four
in $(U,V,W,c,d)$:
\begin{equation}\label{eq:dubrovinquartic}
F[\varepsilon] \,\,:=\,\, \partial^4_U \hat{\theta}[\varepsilon](0)-\partial_U\partial_{W}\hat{\theta}[\varepsilon](0)+\frac{3}{2}c\cdot \partial^2_U\hat{\theta}[\varepsilon](0)+\frac{3}{4}\partial^2_V\hat{\theta}[\varepsilon](0)+d\hat{\theta}[\varepsilon](0).
\end{equation}
We call $F[\varepsilon](U,V,W,c,d)$  the {\em Dubrovin quartic} associated with the half-characteristic $\varepsilon$. 

There are $2^g$ Dubrovin quartics in total, one for each half-characteristic 
$\varepsilon$  in $ (\mathbb{Z}/2\mathbb{Z})^g$.
We find that these quartics can also be used as implicit equations for the
big Dubrovin threefold $\,\mathcal{D}_C^{\rm big}\,$ inside $\,\mathbb{WP}^{3g+1}$.
This follows by combining  Proposition \ref{prop:bigDubrovin} with the following result.

\begin{proposition}[Dubrovin] \label{prop:bigdubrovin}
The Dubrovin quartics and the Hirota quartics span  the same vector subspace of $\,\CC[U,V,W,c,d\,]_4$.
This space of quartics defines the big Dubrovin threefold.
\end{proposition}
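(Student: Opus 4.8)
The plan is to establish both assertions by relating the Hirota quartics $H_{\bf z}$ and the Dubrovin quartics $F[\varepsilon]$ through the classical \emph{addition formula} for theta functions with half-integer characteristics. Recall that this formula expresses a product $\theta({\bf z}+{\bf w})\theta({\bf z}-{\bf w})$ as a finite sum $\sum_{\varepsilon} \hat\theta[\varepsilon]({\bf z})\,\hat\theta[\varepsilon]({\bf w})$ over the $2^g$ half-characteristics, where $\hat\theta[\varepsilon]$ is the doubled-period theta function of \eqref{eq:doubled}. The first step is to rewrite the bilinear expression $H_{\bf z}$, which is a quadratic polynomial in $\theta$ and its derivatives evaluated at a single point ${\bf z}$, as the value at ${\bf w}=0$ of a differential operator applied to the two-variable function $({\bf z},{\bf w})\mapsto \theta({\bf z}+{\bf w})\theta({\bf z}-{\bf w})$. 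Concretely, each bilinear term such as $\partial_U^4\theta({\bf z})\,\theta({\bf z})-4\partial_U^3\theta({\bf z})\,\partial_U\theta({\bf z})+3\{\partial_U^2\theta({\bf z})\}^2$ is a Hirota derivative, and Hirota derivatives of a product $\theta({\bf z}+{\bf w})\theta({\bf z}-{\bf w})$ at ${\bf w}=0$ become pure ${\bf w}$-derivatives of the factored form. This is the standard manipulation that makes the whole theory work, and it is the technical heart of the argument.

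Carrying this out, the second step is to substitute the addition formula into the factored expression and interchange the finite sum over $\varepsilon$ with the differential operator. One obtains $H_{\bf z}(U,V,W,c,d) = \sum_{\varepsilon} \hat\theta[\varepsilon]({\bf z})\cdot \bigl(\text{differential operator applied to } \hat\theta[\varepsilon]\bigr)(0)$, and a direct check shows that the bracketed quantity is exactly the Dubrovin quartic $F[\varepsilon](U,V,W,c,d)$ of \eqref{eq:dubrovinquartic} — the coefficients $1,-1,\tfrac32,\tfrac34,1$ there are precisely what the Hirota-to-${\bf w}$-derivative bookkeeping produces (this is Dubrovin's computation in \cite[\S IV.1]{Dub81}, whose details we import). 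Thus every Hirota quartic is a $\CC$-linear combination of the $2^g$ Dubrovin quartics, with coefficients $\hat\theta[\varepsilon]({\bf z})$ that depend on ${\bf z}$. Conversely, since the functions ${\bf z}\mapsto \hat\theta[\varepsilon]({\bf z})$ are linearly independent (they are the basis of sections of $2\Theta$ on the Jacobian, or one argues directly from their Fourier expansions), one can recover each $F[\varepsilon]$ as a linear combination of finitely many $H_{\bf z}$ by evaluating at $2^g$ suitably chosen points ${\bf z}$ and inverting the resulting nonsingular matrix $\bigl(\hat\theta[\varepsilon]({\bf z}_j)\bigr)$. Hence the two families span the same subspace $\mathcal V \subseteq \CC[U,V,W,c,d]_4$.

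For the final sentence, that $\mathcal V$ defines the big Dubrovin threefold: by Proposition~\ref{prop:bigDubrovin}, $\mathcal{D}^{\rm big}_C$ is the common zero locus in $\mathbb{WP}^{3g+1}$ of \emph{all} the Hirota quartics $H_{\bf z}$, hence of every element of $\mathcal V$; but since $\mathcal V$ is spanned by the $H_{\bf z}$, the zero locus of $\mathcal V$ equals the zero locus of that spanning set, which is exactly $\mathcal{D}^{\rm big}_C$. Equivalently, the finitely many Dubrovin quartics $F[\varepsilon]$, $\varepsilon\in(\mathbb{Z}/2\mathbb{Z})^g$, already cut out $\mathcal{D}^{\rm big}_C$ set-theoretically.

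\textbf{Main obstacle.} The one genuinely delicate point is the first step: verifying that the Hirota-type bilinear combinations appearing in $H_{\bf z}$ translate, under the addition formula, into \emph{exactly} the operator with coefficients $1,-1,\tfrac32 c,\tfrac34,d$ seen in $F[\varepsilon]$. This is a careful but finite differential-operator computation — tracking how $\partial_U,\partial_V,\partial_W$ distribute across $\theta({\bf z}+{\bf w})\theta({\bf z}-{\bf w})$, collecting the even-order ${\bf w}$-derivatives that survive at ${\bf w}=0$, and matching the numerical factors from \eqref{eq:hirota}. I would either reproduce Dubrovin's computation in \cite[\S IV.1]{Dub81} or, for a genus-independent check, confirm the coefficient identity symbolically. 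The linear-independence of the $\hat\theta[\varepsilon]$ and the matrix-inversion step are then routine.
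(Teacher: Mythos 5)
Your proposal is correct and follows essentially the same route as the paper: both establish the key identity expressing $H_{\bf z}$ as a theta-constant-weighted linear combination of the $F[\varepsilon]$ via the addition formula (deferring the coefficient bookkeeping to Dubrovin's \S IV.1), invert it using the linear independence of the functions ${\bf z}\mapsto\hat\theta[\varepsilon](2{\bf z})$ and a nonsingular $2^g\times 2^g$ evaluation matrix, and then conclude from Proposition~\ref{prop:bigDubrovin}. The only discrepancy is a harmless normalization: the paper's identity \eqref{eq:quarticlinearcombination} carries a factor $8$ and coefficients $\hat\theta[\varepsilon](2{\bf z})$ rather than $\hat\theta[\varepsilon]({\bf z})$, which does not affect the spanning argument.
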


\begin{proof}
This result is essentially the one proved in \cite[Lemma 4.1.1]{Dub81}.
A key step is the~identity
	\begin{equation}\label{eq:quarticlinearcombination}
	 H_{\bf z} \,\,\, = \,\,\,\, 8\cdot \!\! \!\sum_{\varepsilon \in 
	 (\mathbb{Z}/2\mathbb{Z})^g} \! \hat{\theta}[\varepsilon](2{\bf z}) \cdot F[\varepsilon] \qquad
	 {\rm in} \,\,\,\,\CC[U,V,W,c,d\,]_4.
	\end{equation}
This is proved via the addition formula \cite[(4.1.5)]{Dub81}, 
 analogously to the derivation from (4.1.8) to (4.1.10) in \cite{Dub81}.
We can also invert the linear relations in (\ref{eq:quarticlinearcombination}) and express
the Dubrovin quartics $F[\varepsilon]$ 
as $\CC$-linear combinations
of the Hirota quartics $H_{\bf z}$. Indeed, the functions $z\mapsto \hat{\theta}[\varepsilon](2z)$ are 
linearly independent \cite[(1.1.11)]{Dub81}.
If we take ${\bf z}$ from any fixed set of
$2^g$ general points  in $\CC^g$ then the corresponding 
$2^g \times 2^g$ matrix of theta constants $\hat{\theta}[\varepsilon](2{\bf z})$ are invertible.
The second assertion in Proposition \ref{prop:bigdubrovin} now follows from
Proposition \ref{prop:bigDubrovin}.
\end{proof}

\begin{corollary}
The Dubrovin threefold $\mathcal{D}_C$ is an irreducible
component of the image of the big Dubrovin threefold $\mathcal{D}^{\rm big}_C$ under
the map  $\, \mathbb{WP}^{3g+1} \rightarrow   \mathbb{WP}^{3g-1},\,
(U,V,W,c,d) \mapsto (U,V,W) $.
\end{corollary}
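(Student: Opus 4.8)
The plan is to identify $\mathcal{D}_C$ with a component of the fiber over the projection $\pi\colon \mathbb{WP}^{3g+1} \to \mathbb{WP}^{3g-1}$ by tracking how the definitions match up. First I would recall from Remark~\ref{rem:theothercomponent} and the discussion opening Section~\ref{sec4} that, by \cite[(4.2.5)]{Dub81}, the set of $(U,V,W)$ for which \eqref{eq:thetatau} solves \eqref{eq:hirota} for \emph{some} $c,d$ and all $D$ is exactly $\mathcal{D}_C \cup \mathcal{D}_C^{-}$. By Proposition~\ref{prop:bigDubrovin}, the big Dubrovin threefold $\mathcal{D}^{\rm big}_C$ is the algebraic set cut out by all Hirota quartics $H_{\bf z}$ in the variables $(U,V,W,c,d)$; hence its image under $\pi$ is precisely the set of those $(U,V,W)$ that admit a completion to a point of $\mathcal{D}^{\rm big}_C$, i.e.\ for which there exist $c,d$ with all $H_{\bf z}(U,V,W,c,d)=0$. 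Unwinding Proposition~\ref{prop:bigDubrovin} once more, this is exactly the condition that \eqref{eq:thetatau} solves \eqref{eq:hirota} for some $c,d$ and all $D$. Therefore the set-theoretic image $\pi(\mathcal{D}^{\rm big}_C)$ equals $\mathcal{D}_C \cup \mathcal{D}_C^{-}$.

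Next I would address the scheme-theoretic subtlety: the image of a projective variety under a morphism of (weighted) projective spaces is closed, so $\pi(\mathcal{D}^{\rm big}_C)$ is a closed algebraic subset, and the equality of sets above is an equality of closed subsets. Since $\mathcal{D}_C$ is irreducible of dimension three (Krichever's theorem, as recorded after Proposition~\ref{prop:makeKPsol}) and $\mathcal{D}_C^{-}$ is an isomorphic copy of it, hence also irreducible of dimension three, the decomposition $\mathcal{D}_C \cup \mathcal{D}_C^{-}$ into irreducibles shows $\mathcal{D}_C$ is one of the (at most two) irreducible components of $\pi(\mathcal{D}^{\rm big}_C)$. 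This gives the conclusion. A minor point worth spelling out is that $\mathcal{D}_C$ is genuinely a subset of the image and not, say, contained in $\mathcal{D}_C^{-}$ as a proper subvariety: this is clear because both have dimension three and neither contains the other (e.g.\ a generic point of $\mathcal{D}_C$, arising from a generic $p\in C$, is not fixed by the involution $(U,V,W)\mapsto(U,-V,W)$, since $V$ does not vanish identically along $C$).

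The step I expect to require the most care is the passage through Proposition~\ref{prop:bigDubrovin} and the matching of the "for all $D$" quantifier with the "for all ${\bf z}$" quantifier: one must be sure that as $D$ ranges over $\CC^g$ with $(U,V,W)$ fixed, the argument ${\bf z}=Ux+Vy+Wt+D$ of the theta function sweeps out all of $\CC^g$, which is immediate since $D$ alone already does so. With that in hand, "$\uptau$ solves \eqref{eq:hirota} for all $x,y,t$ and all $D$" is literally "$H_{\bf z}(U,V,W,c,d)=0$ for all ${\bf z}\in\CC^g$", which by Proposition~\ref{prop:bigDubrovin} is membership in $\mathcal{D}^{\rm big}_C$. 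No hard computation is needed; the content is bookkeeping of quantifiers together with the already-established Propositions~\ref{prop:bigDubrovin} and the irreducibility input from Section~\ref{sec2}.
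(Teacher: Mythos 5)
Your argument is essentially the paper's own proof, written out in more detail: both identify the image of $\mathcal{D}^{\rm big}_C$ under the projection with $\mathcal{D}_C\cup\mathcal{D}_C^{-}$ by combining Proposition~\ref{prop:bigDubrovin} with the characterization of the Dubrovin threefold as the parameter space of KP solutions from Remark~\ref{rem:theothercomponent}, and then conclude that the irreducible threefold $\mathcal{D}_C$ is a component of that union. Your added bookkeeping of the quantifiers and of the closedness and dimension of the image only makes explicit what the paper leaves implicit.
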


\begin{proof}
The image of the big Dubrovin threefold $\mathcal{D}^{\rm big}_C$ 
in $ \mathbb{WP}^{3g-1}$ is
also an algebraic threefold. Its equations are obtained from those of
$\mathcal{D}^{\rm big}_C$ by eliminating the unknowns $c$ and $d$.
In fact, this image is equal to $\mathcal{D}_C \cup \mathcal{D}_C^{-}$.
The second component is explained in Remark \ref{rem:theothercomponent}.
This follows from the characterization of the Dubrovin threefold
as a  space of KP solutions. 
\end{proof}

\begin{remark}\label{rmk:linearcd}
Consider any point $(U,V,W)$ in the Dubrovin threefold $\mathcal{D}_{C} \subset
\mathbb{WP}^{3g-1}$. It is given to us numerically. The point
 has a unique preimage in the 
big Dubrovin threefold $\mathcal{D}^{\rm big}_C \subset \mathbb{WP}^{3g+1}$.
To compute that preimage, we plug $(U,V,W)$ into several Hirota quartics~$H_{\bf z}$
or Dubrovin quartics $F[\varepsilon]$. This results in an overdetermined
system of linear equations in two unknowns $c$ and $d$. We use numerical
methods to approximately solve these equations. This gives us estimates for $c$ and $d$.
This method was used to estimate the constants in \eqref{eq:cdcd}.
\end{remark}

Suppose now that we are given a curve $C$ by way of its Riemann matrix $B$.
For instance, this is the hypothesis for the construction of KP solutions in \cite{DFS}.
We can compute approximations of the quartics $H_{\bf z}$ or $F[\varepsilon]$
using numerical software for theta functions (cf.~\cite{julia}).
  Following \cite[\S 4.3]{Dub81}, we can then find 
  equations for the canonical model of $C$ in $\mathbb{P}^{g-1}$. 
  For any half-characteristic $\varepsilon \in (\mathbb{Z}/2\mathbb{Z})^g$, 
  we write $Q[\varepsilon]$ for  the Hessian matrix 
  of the function~$\hat{\theta}[\varepsilon]({\bf z})$. We regard $Q[\varepsilon]$
    as a quadratic form in $g$ variables. The next lemma characterizes
    the intersection of  the subspace of quartics in Proposition  \ref{prop:bigdubrovin}
    with the polynomial subring $\CC[U,V,W]$.

\begin{lemma}\label{lemma:quarticelimination}
A $\,\CC$-linear combination $\sum \lambda_{\varepsilon} F[\varepsilon]$ 
of the Dubrovin quartics is independent of the two unknowns $c$ and $d$ if and only if it has the form
	\begin{equation}\label{eq:quarticelimination} 
	\sum_{\varepsilon \in (\mathbb{Z}/2\mathbb{Z})^g} \lambda_{\varepsilon} \cdot \partial^4_U\hat{\theta}[\varepsilon] 
	\end{equation}
	where the $2^g$ complex scalars $\lambda_{\varepsilon}$ satisfy the linear equations
	\begin{equation}\label{eq:linearcondition}
	\sum_{\varepsilon} \lambda_{\varepsilon} \cdot Q[\varepsilon] \,=\, 0 \qquad {\rm and}  \qquad
	\sum_{\varepsilon} \lambda_{\varepsilon} \cdot \hat{\theta}[\varepsilon] \,=\, 0.
	\end{equation} 
The linear system \eqref{eq:linearcondition} has maximal rank, i.e.~it has $\,2^g-\frac{g(g+1)}{2}-1$  independent 
solutions~$(\lambda_{\varepsilon})$.
\end{lemma}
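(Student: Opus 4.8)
The plan is to analyze the $c$- and $d$-dependence of each Dubrovin quartic $F[\varepsilon]$ separately, then extract the linear-algebraic conditions that force a combination to be independent of $c$ and $d$. Looking at \eqref{eq:dubrovinquartic}, the variable $d$ appears only through the term $d\,\hat\theta[\varepsilon](0)$, and the variable $c$ appears only through the term $\tfrac{3}{2}c\cdot\partial_U^2\hat\theta[\varepsilon](0)$. So in $\sum_\varepsilon\lambda_\varepsilon F[\varepsilon]$ the coefficient of $d$ is $\sum_\varepsilon\lambda_\varepsilon\hat\theta[\varepsilon](0)$, a scalar, and the coefficient of $c$ is $\tfrac{3}{2}\sum_\varepsilon\lambda_\varepsilon\,\partial_U^2\hat\theta[\varepsilon](0)$, a quadratic form in $U$. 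The quadratic form $\partial_U^2\hat\theta[\varepsilon](0)$ is, by definition, the quadratic form attached to the Hessian matrix $Q[\varepsilon]$ of $\hat\theta[\varepsilon]$ at the origin (this uses that $\hat\theta[\varepsilon]$ is even when $\delta=0$, so its gradient at $0$ vanishes and $\partial_U^2\hat\theta[\varepsilon](0) = U^T Q[\varepsilon]\, U$). Hence the $c$-term vanishes identically as a polynomial in $U$ precisely when $\sum_\varepsilon\lambda_\varepsilon Q[\varepsilon] = 0$ as a matrix, and the $d$-term vanishes precisely when $\sum_\varepsilon\lambda_\varepsilon\hat\theta[\varepsilon] = 0$. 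This gives the two stated conditions \eqref{eq:linearcondition}, and once they hold, the surviving part of $\sum_\varepsilon\lambda_\varepsilon F[\varepsilon]$ is exactly $\sum_\varepsilon\lambda_\varepsilon\bigl(\partial_U^4\hat\theta[\varepsilon] - \partial_U\partial_W\hat\theta[\varepsilon] + \tfrac{3}{4}\partial_V^2\hat\theta[\varepsilon]\bigr)$; I would then argue that the second and third summands already cancel on the nose once $\sum\lambda_\varepsilon Q[\varepsilon]=0$, because $\partial_U\partial_W\hat\theta[\varepsilon](0)$ and $\partial_V^2\hat\theta[\varepsilon](0)$ are also bilinear/quadratic expressions built from the \emph{same} Hessian matrix $Q[\varepsilon]$ (namely $U^T Q[\varepsilon] W$ and $V^T Q[\varepsilon] V$ respectively), so they are annihilated by the same linear relation among the $Q[\varepsilon]$. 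That leaves precisely the form \eqref{eq:quarticelimination}.

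For the final rank assertion I would count dimensions directly. The map $(\lambda_\varepsilon)_{\varepsilon\in(\mathbb Z/2\mathbb Z)^g} \mapsto \bigl(\sum_\varepsilon\lambda_\varepsilon Q[\varepsilon],\ \sum_\varepsilon\lambda_\varepsilon\hat\theta[\varepsilon]\bigr)$ goes from $\CC^{2^g}$ into $\mathrm{Sym}^2(\CC^g)\oplus\CC$, a space of dimension $\tfrac{g(g+1)}{2}+1$. So the solution space has dimension at least $2^g - \tfrac{g(g+1)}{2} - 1$, and the claim is that equality holds, i.e.\ the map is surjective. This is where I expect the real work to be: surjectivity is equivalent to saying that the $2^g$ pairs $\bigl(Q[\varepsilon],\hat\theta[\varepsilon](0)\bigr)$ span $\mathrm{Sym}^2(\CC^g)\oplus\CC$, i.e.\ no nonzero linear functional on $\mathrm{Sym}^2(\CC^g)\oplus\CC$ annihilates all of them. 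Equivalently, for any symmetric matrix $M$ and scalar $\mu$, not both zero, there is some $\varepsilon$ with $\mathrm{tr}(M\,Q[\varepsilon]) + \mu\,\hat\theta[\varepsilon](0)\neq 0$.

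The clean way to get this is to invoke the classical fact that the second-order theta functions $\hat\theta[\varepsilon]$, together with the relevant second derivatives of the Riemann theta function, are linearly independent — this is exactly the content Dubrovin cites as \cite[(1.1.11)]{Dub81} and uses in the proof of Proposition~\ref{prop:bigdubrovin}. Concretely: the functions $z\mapsto\hat\theta[\varepsilon](z)$ form a basis of the space of second-order theta functions, and within that space the subspace spanned by $\theta(z)$ and its second-order partials $\partial^2\theta/\partial z_i\partial z_j$ has dimension $\tfrac{g(g+1)}{2}+1$ for a generic (or any indecomposable) period matrix. Pairing the $\hat\theta[\varepsilon]$ against this $\bigl(\tfrac{g(g+1)}{2}+1\bigr)$-dimensional space of "test functions" is, up to evaluating the addition formula at $z=0$, exactly the pairing $(\lambda_\varepsilon)\mapsto\bigl(\sum\lambda_\varepsilon Q[\varepsilon],\sum\lambda_\varepsilon\hat\theta[\varepsilon](0)\bigr)$; linear independence of the $\hat\theta[\varepsilon]$ forces this pairing to be non-degenerate, hence the map is surjective and the rank is maximal. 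So the main obstacle is not the symbolic bookkeeping but correctly identifying the relevant span of second-order theta functions and matching it to $\mathrm{Sym}^2\oplus\CC$ via the addition formula; I would lean on \cite[\S 1.1 and \S 4.1]{Dub81} for that identification rather than reprove it from scratch.
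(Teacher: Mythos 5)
Your proposal is correct and takes essentially the same route as the paper: the paper likewise rewrites $F[\varepsilon]$ via the Hessian forms as $\partial^4_U\hat{\theta}[\varepsilon]-Q[\varepsilon](U,W)+\frac{3}{2}c\,Q[\varepsilon](U,U)+\frac{3}{4}Q[\varepsilon](V,V)+d\hat{\theta}[\varepsilon]$, observes that vanishing of the quadratic form $\sum_\varepsilon\lambda_\varepsilon Q[\varepsilon]$ kills the $U\otimes W$ and $V\otimes V$ terms as well, and defers the maximal-rank claim to Dubrovin's Lemma 4.3.1 — exactly the reference your sketch leans on.
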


\begin{proof}	 Using the quadratic forms $Q[\varepsilon]$, 
we can rewrite the Dubrovin quartic \eqref{eq:dubrovinquartic} as follows:
\begin{equation}
 F[\varepsilon] \,\,= \,\, \partial^4_U \hat{\theta}[\varepsilon] \,-\, Q[\varepsilon](U,W)
 \, +\,\frac{3}{2}c \, Q[\varepsilon](U,U) \,+\,\frac{3}{4}Q[\varepsilon](V,V) \, + \,  d\hat{\theta}[\varepsilon].
\end{equation}
The  linear combinations of the $F[\varepsilon]$ where the variables $c,d$ do not appear have the form
$$
\begin{small}
\sum_{\varepsilon} \lambda_{\varepsilon} \partial^4_U\hat{\theta}[\varepsilon] - \sum_{\varepsilon}\lambda_{\varepsilon}Q[\varepsilon](U,W)+\frac{3}{2}c \cdot \sum_{\varepsilon} \lambda_{\varepsilon} Q[\varepsilon](U,U)+\frac{3}{4}\sum_{\varepsilon} \lambda_{\varepsilon} Q[\varepsilon](V,V)+
d\sum_{\varepsilon} \lambda_{\varepsilon}\hat{\theta}[\varepsilon],
\end{small}
$$
where $\sum_{\varepsilon} \lambda_{\varepsilon}\hat{\theta}[\varepsilon]=0$ and $(\sum_{\varepsilon} \lambda_{\varepsilon} Q[\varepsilon])(U,U)=0$. The second condition means that
the quadratic form $\sum_{\varepsilon} \lambda_{\varepsilon} Q[\varepsilon]$ is zero, and hence
 $(\sum_{\varepsilon} \lambda_{\varepsilon}
Q[\varepsilon])(U,W)=(\sum_{\varepsilon} \lambda_{\varepsilon}Q[\varepsilon])(V,V)=0$.
 This proves the first part of the lemma. For the second part, we need to show that the linear system 
 \eqref{eq:linearcondition} for the $(\lambda_{\varepsilon})$ is of maximal rank.
 This is proven in \cite[Lemma 4.3.1]{Dub81}.
\end{proof}

As a consequence, we can use the KP equation to reconstruct a curve $C$ from its Riemann matrix
$B$. This suggests a numerical solution to the Schottky Recovery Problem (cf.~\cite[\S 2]{CKS}).

\begin{proposition}\label{prop:recovery}
	The quartics \eqref{eq:quarticelimination} cut 
	out the canonical model of the curve $C$ in $\mathbb{P}^{g-1}_U$.
\end{proposition}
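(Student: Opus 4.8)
The plan is to identify the quartics \eqref{eq:quarticelimination} with a classical family of quartics cut out by the theta function on the Kummer variety, and to invoke the well-known fact that those quartics cut out the canonical curve (after the standard projection). Concretely, I would argue as follows. By Lemma \ref{lemma:quarticelimination}, the quartics in question are precisely the $c,d$-free linear combinations of the Dubrovin quartics $F[\varepsilon]$; equivalently, they are the linear span of the forms $\partial_U^4\hat\theta[\varepsilon]$ subject to the two linear conditions $\sum_\varepsilon\lambda_\varepsilon Q[\varepsilon]=0$ and $\sum_\varepsilon\lambda_\varepsilon\hat\theta[\varepsilon]=0$. The first step is to recognize that the map $U\mapsto\bigl(\hat\theta[\varepsilon](0)\bigr)_{\varepsilon\in(\mathbb Z/2\mathbb Z)^g}$, or rather its infinitesimal version encoded by $\partial_U$, is exactly the second-order theta embedding of the Jacobian near the origin, whose image is the Kummer variety $\mathrm{Kum}(C)\subset\mathbb P^{2^g-1}$. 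Differentiating the theta function four times in the direction $U$ evaluates, along the canonical curve $U\in C\hookrightarrow\mathbb P^{g-1}$, the fourth-order part of the expansion of $\hat\theta[\varepsilon]$ at $0$ in the direction of a tangent vector to the Abel--Jacobi image of $C$; this is precisely where Krichever's theory and the vanishing of $\partial_U\theta$, $\partial_U^2\theta$ along $C$ enters.

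The second step is to show that imposing $\sum\lambda_\varepsilon\hat\theta[\varepsilon]=0$ and $\sum\lambda_\varepsilon Q[\varepsilon]=0$ amounts to selecting exactly those quadrics through the image of the Kummer variety that already vanish to second order along the canonical curve, so that the residual quartic $\sum\lambda_\varepsilon\partial_U^4\hat\theta[\varepsilon]$ restricts to a genuine quartic form on $\mathbb P^{g-1}_U$ rather than to something identically zero or non-homogeneous. Here I would use the dimension count already supplied: Lemma \ref{lemma:quarticelimination} guarantees the solution space of \eqref{eq:linearcondition} has dimension $2^g-\binom{g+1}{2}-1$, and one knows from the classical theory (Dubrovin \cite[\S 4.3]{Dub81}, and the Schottky-type results of Gunning/Welters/Shiota) that this is exactly the expected number of quartics needed to cut out the canonical curve scheme-theoretically in $\mathbb P^{g-1}$ for $g\ge 4$ (with the low-genus cases $g=2,3$ checked directly: genus two gives no condition on $U$ beyond the empty one, genus three recovers the single ternary quartic $f$ as in Theorem \ref{thm:quartic}). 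So the strategy is: (i) write down the quartic forms \eqref{eq:quarticelimination} as restrictions of theta-derivative expressions, (ii) show each one vanishes on the canonical curve using Krichever's characterization that $U\in C$ forces the Hirota relation and hence in particular the $c,d$-free part to vanish, and (iii) show the common zero locus is not larger than $C$ by the dimension/degree count, i.e.\ the ideal they generate has the right Hilbert function.

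The main obstacle I anticipate is step (iii): proving that the quartics \eqref{eq:quarticelimination} do not merely vanish on $C$ but actually \emph{cut it out}, with no extraneous components. Vanishing on $C$ is essentially a restatement of Proposition \ref{prop:bigDubrovin} combined with Proposition \ref{prop:makeKPsol} — a point $p\in C$ produces a point $(U,V,W)$ on $\mathcal D_C$, and along the fibre over such a point the Hirota quartic, hence its $c,d$-free part, vanishes identically; so $U=U(p)$ kills every form in \eqref{eq:quarticelimination}. The converse inclusion — that any $U$ killing all these quartics lies on the canonical curve — is the substantive content, and is precisely Shiota's resolution of the Schottky problem phrased in this language (or, for the recovery direction with $B$ assumed to come from a curve, the classical fact that the canonical curve is defined by the quartics arising from second-order theta functions, cf.\ the results invoked in \cite[\S 4.3]{Dub81}). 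I would therefore reduce step (iii) to citing \cite[Lemma 4.3.1]{Dub81} and the maximal-rank statement already in Lemma \ref{lemma:quarticelimination}: since the space of quartics has the exact expected dimension and every canonical quartic relation of $C$ appears among them (by the vanishing argument), and since the canonical curve is a nondegenerate projectively normal subvariety of $\mathbb P^{g-1}$ whose ideal in degrees $\le 4$ is known, equality of the two zero loci follows. A clean way to close the argument is to invoke the equality of Hilbert polynomials: both the ideal generated by \eqref{eq:quarticelimination} and the homogeneous ideal of the canonical curve have the same degree-four piece, hence define the same subscheme, hence the same variety.
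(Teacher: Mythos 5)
Your step (ii), the vanishing of every quartic in \eqref{eq:quarticelimination} on the canonical curve, matches the paper's first paragraph and is fine. The genuine gap is in step (iii). You propose to close the converse inclusion by a dimension count, asserting that the ideal generated by \eqref{eq:quarticelimination} and the homogeneous ideal of the canonical curve ``have the same degree-four piece.'' This is numerically false: the space \eqref{eq:quarticelimination} has dimension $2^g-\tfrac{g(g+1)}{2}-1$ (e.g.\ $16$ for $g=5$), whereas $(I_C)_4$ has dimension $\binom{g+3}{4}-7(g-1)$ (e.g.\ $42$ for $g=5$). So the quartics \eqref{eq:quarticelimination} span only a small subspace of the degree-four piece of the canonical ideal, the Hilbert-function comparison collapses, and there is no a priori reason that the common zero locus of such a small subspace is not strictly larger than $C$. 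Citing Shiota does not repair this: Shiota's theorem characterizes which matrices $B$ are Riemann matrices of curves; it does not assert that this particular linear system of quartics cuts out the canonical model.

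The missing idea, which is the actual content of the paper's proof, is a specialization of the Hirota quartics. If $\mathbf{z}$ is a \emph{singular point of the theta divisor}, so that $\theta(\mathbf{z})$ and all first partials vanish, then every term of $H_{\mathbf{z}}$ in \eqref{eq:hirotaquartics} dies except one, leaving $H_{\mathbf{z}} = 3\bigl(\partial_U^2\theta(\mathbf{z})\bigr)^2$. This depends only on $U$, hence by Lemma \ref{lemma:quarticelimination} lies in the span \eqref{eq:quarticelimination}. One then invokes the theorem of Green (building on Petri) that for $C$ non-hyperelliptic, non-trigonal, not a plane quintic, and of genus at least five, the quadrics $\partial_U^2\theta(\mathbf{z})$ with $\mathbf{z}$ ranging over ${\rm Sing}(\Theta)$ generate the canonical ideal; their squares therefore cut out the canonical curve as a set. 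Without producing these explicit perfect-square quartics inside the span, your argument establishes only one inclusion of zero loci.
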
 

\begin{proof}
We know from \cite[\S 4.3]{Dub81}
 that the projection of the big Dubrovin threefold $\mathcal{D}^{\rm big}_C$ 
 to $\mathbb{P}^{g-1}_U$ corresponds to the canonical model of $C$. Hence, 
any linear combination of the $F[\varepsilon]$~where only the variables $u_1,\dots,u_g$ appear vanishes
on the curve. For the converse, we need to show that amongst the 
quartics \eqref{eq:quarticelimination} we can find equations that cut out the canonical curve. 
  Following \cite[\S 4.3]{Dub81}, we  will find these amongst  Hirota quartics  $H_{\mathbf{z}}$ 
   in which  only the variables $u_1,\dots,u_g$ appear. Indeed, any such quartic is a linear 
   combination of the $F[\varepsilon]$, thanks to Proposition~\ref{prop:bigdubrovin}.
   Lemma \ref{lemma:quarticelimination} shows that it has the form \eqref{eq:quarticelimination}. 
   Suppose that $\mathbf{z}\in \mathbb{C}^g$ is a singular point of the theta divisor $\{ \theta(\mathbf{z})=0 \}$.
   This means
    $\theta(\mathbf{z})=\frac{\partial \theta}{\partial z_1}(\mathbf{z})=
   \cdots = \frac{\partial \theta}{\partial z_g}(\mathbf{z})=0$. Then
      all but one of the terms in \eqref{eq:hirotaquartics} vanish: 
     $\,H_{\bf z} = 3(\partial^2_U\theta(\mathbf{z}))^2$.
   This depends only on $u_1,\ldots,u_g$, and
any such quartic is of the form \eqref{eq:quarticelimination}.
  We assume that the curve $C$ has genus at least five and is
     not hyperelliptic or trigonal.
    The other cases require special arguments, which we omit here.
  By results of   Petri \cite{GL} and Green \cite{Green}, we know that the canonical ideal of $C$ is generated by the quadrics $\partial^2_U\theta(\mathbf{z})$, where $\mathbf{z}$ varies on the singular locus of the theta divisor. Hence, the quartics $(\partial^2_U\theta(\mathbf{z}))^2$ cut out the canonical model of $C$ as a set.
\end{proof}

\begin{example} \label{ex:Trott2}
Let $g=3$ and consider the period matrix $B$ in Example~\ref{ex:Trott1}.
We apply the numerical process above to recover the Trott curve $C$
up to a projective transformation of~$\PP^2$.
We use the {\tt Julia} package in \cite{julia} to numerically evaluate the theta constants
	 for the given $B$. This allows us to write down the
	eight Dubrovin quartics $F[\varepsilon]$, and from this we obtain the
	system \eqref{eq:linearcondition} of seven linear equations in eight unknowns $\lambda_\varepsilon$.
         Up to scaling, this system has a 
	unique solution	  \eqref{eq:quarticelimination}, as promised 
	by Proposition \ref{prop:recovery}.  Computing this solution is
	equivalent to evaluating the $8 \times 8$ 
determinant given by Dubrovin in \cite[equation  (4.2.11)]{Dub81}.

	The quartic we obtain from this process does not look like the Trott quartic at all:
\begin{equation}
\label{eq:secretTrott}
	\begin{small}
	\begin{matrix}
	-0.04216205642716586u_1^4 + 0.12240048937276882u_1^3u_2  -0.29104871408187094u_1^3u_3 \\
	-6.8912949529273355u_1^2u_2^2 + 17.414377754001833u_1^2u_2u_3 -7.511468695367071u_1^2u_3^2 \\
	-14.027390884600191u_1u_2^3 + 3.264586380028863u_1u_2^2u_3  + 17.414377754001833u_1u_2u_3^2\\
	-0.29104871408187094u_1u_3^3 -7.013695442300095u_2^4 -14.027390884600202u_2^3u_3 \\
	-6.891294952927339u_2^2u_3^2 + 0.12240048937276349u_2u_3^3 -0.04216205642716675u_3^4.
	\end{matrix}
	\end{small}
\end{equation}
However, it turns out that  \eqref{eq:prominent} and \eqref{eq:secretTrott} are
equivalent under the action of ${\rm PGL}(3,\CC)$ on ternary quartics. We verified this
using the \texttt{Magma} package in \cite{LerRitSij}. Namely, we computed the
 Dixmier-Ohno invariants of both curves, and we checked that they agree up to numerical round-off.
 Any extension to $g \geq 4$ involves the Schottky problem, as discussed in Section \ref{sec5}.
\end{example}

A similar method can be applied to hyperelliptic curves. 
For $g \geq 3$ we  recover a rational normal curve $\PP^1$ in $\PP^{g-1}$.
But, we can also find the branch points of the $2$-$1$ cover~$C \rightarrow \PP^1$.

\begin{remark}\label{rmk:quinticsgenustwo}
	For $g=2$, no nonzero quartics arise from Lemma \ref{lemma:quarticelimination}. However, we can
consider quintics $\sum_{\varepsilon} \ell[\varepsilon]\cdot F[\varepsilon]$ 
where the four $\ell[\varepsilon]$ are unknowns linear forms in $U = (u_1,u_2)$.
We seek such quintics where only the variables $U,V,W$ appear.  This happens if and only if
$$ \begin{matrix} \sum_{\varepsilon} \ell[\varepsilon]\cdot Q[\varepsilon](U,U) \,=\, 0 \qquad {\rm and}
	 \qquad \sum_{\varepsilon}\ell[\varepsilon]\cdot \hat{\theta}[\varepsilon] \,=\, 0. \end{matrix} $$
	 This is a system of six linear equations in eight unknown complex numbers, namely the
	 coefficients of the $\ell[\varepsilon]$.
	 It has two independent solutions,  giving us two quintics
$\,\sum_{\varepsilon} \ell[\varepsilon]\cdot F[\varepsilon]$.
Up to taking linear combinations, these are precisely the two quintics \eqref{eq:twoquintics}
in Theorem  \ref{thm:2quintics}. 
\end{remark}

\begin{example}\label{ex:bigDubrovin} Consider the genus two curve in Example~\ref{ex:running}. 
Its Dubrovin quartics are
$$
\begin{tiny}
 \begin{matrix}
4.4044247813u_1^4 + 8.80884956304u_1^3u_2 + 13.21327434456u_1^2u_2^2 
+8.80884956304u_1u_2^3 + 4.4044247813u_2^4
-0.1673475726606u_1^2c \\ -0.167347572669u_1u_2c -0.16734757266u_2^2c + 0.11156504844u_1w_1+ 0.055782524223u_1w_2
 + 0.055782524223u_2w_1 \\ +0.111565048440 u_2w_2
 -0.083673786330 v_1^2 -0.08367378633v_1v_2 -0.0836737863v_2^2
  + 1.0042389593d , \smallskip \\
13.5267575687u_1^4 + 27.053515137u_1^3u_2 + 20.4046553367u_1^2u_2^2 +6.877897768u_1u_2^3 + 32.6563192498u_2^4
-0.51395499447u_1^2c \\ -0.51395499447u_1u_2c 
-4.95646510162u_2^2c +0.34263666298u_1w_1+ 0.171318331491u_1w_2
 + 0.171318331491u_2w_1  \\ +3.30431006774u_2w_2 
-0.256977497237 v_1^2 -0.256977497237 v_1v_2 -2.47823255081v_2^2
  + 0.33474631977 d , \smallskip   \\  
  32.6563192498 u_1^4 + 6.87789776801u_1^3u_2 + 20.4046553367u_1^2u_2^2 +27.053515137u_1u_2^3 + 13.5267575687u_2^4
  -4.9564651016u_1^2c \\ -0.513954994u_1u_2c  
-0.51395499447u_2^2c +3.3043100677u_1w_1+ 0.17131833149u_1w_2
 + 0.171318331491u_2w_1 \\ +0.342636663u_2w_2 
-2.4782325508 v_1^2 -0.2569774972370v_1v_2 -0.2569774972369v_2^2
+0.334746319778d , \smallskip \\      
  32.6563192505u_1^4 + 123.7473792u_1^3u_2 + 195.7088775358u_1^2u_2^2 +123.7473792u_1u_2^3 + 32.6563192505u_2^4
-4.9564651017u_1^2c \\ -9.398975209u_1u_2c 
-4.9564651017u_2^2c +3.30431006782u_1w_1+ 3.132991736u_1w_2
 + 3.132991736u_2w_1  \\ +3.30431006782u_2w_2 
-2.47823255087 v_1^2-4.699487604v_1v_2 -2.47823255086v_2^2
+0.334746319778d .
\end{matrix}
\end{tiny}
$$
By Proposition \ref{prop:bigdubrovin},
these four quartics $F[\varepsilon]$ cut out the big Dubrovin threefold in $\mathbb{WP}^{7}$. 
By eliminating $c$ and $d$ numerically,  as described in Remark \ref{rmk:quinticsgenustwo},
 we obtain two quintics in $u_1,u_2,v_1,v_2,w_1,w_2$.
A distinguished basis for this space of quintics is given by 
\eqref{eq:twoquintics}, where
$$ \bar{F} \, =\,  (r u_1+s u_2)^6 \,+\, (s u_1+r u_2)^6 , \,\,\,
{\rm with}  \,\,r = 0.5596349    - 0.9693161\, i  \,\,\, {\rm and} \,\,     s = 1.11926985. $$ 
This binary sextic has rank two \cite[\S 9.2]{INLA},
so it is equivalent under the action of ${\rm PGL}(2,\CC)$ to the sextic
$u_1^6 - u_2^6$ we started with in Example~\ref{ex:running}.
Thus, up to a projective transformation of $\PP^1$, 
numerical computation based on Proposition \ref{prop:bigdubrovin}
 recovers the constraints shown in~\eqref{eq:running1}.
\end{example}

\section{Genus Four and Beyond} \label{sec5}

The theta function \eqref{eq:RTFreal} and the theta constants $\,\hat{\theta}[\varepsilon](0)\,$
are defined for any complex symmetric $g \times g$ matrix $B$ with negative
definite real part. Such matrices represent principally polarized abelian varieties of dimension $g$.
We view the moduli space of such abelian varieties as a variety that is
parametrized by theta constants. For each point in that moduli space, 
i.e.~for each compatible list of theta constants, we can study the
$2^g$ Dubrovin quartics $F[\varepsilon]$ in \eqref{eq:dubrovinquartic}.

We here lay the foundation for future studies
of these universal equations. For $g \geq 4$, one big goal is to eliminate
the parameters $U, V,W$, in order to obtain constraints 
among the theta constants that define the Schottky locus.
That this works in theory is a celebrated theorem of
Shiota  \cite{KrSh, Shiota}, but it has never been carried out in practice.
For $g=4$, we hope to recover
the classical Schottky-Jung relation for the 
Schottky hypersurface. Here the canonical curves are space sextics in $\PP^3$.
 For $g=5$, the Schottky locus
has codimension three in the moduli space, and canonical curves are
intersection of three quadrics in $\PP^4$.
It will be very interesting to experiment with that case, ideally building on the advances
in \cite{AC, FGSM}.

\begin{example}[Genus four curves are planar]
When computing parametrizations of Dubrovin threefolds $\mathcal{D}_C$
as in Sections \ref{sec3} and \ref{sec4}, it is convenient to work
with a planar model of the given curve $C$. Planar curves are typically
singular in $\PP^2$, but they can be smooth in other toric surfaces, such as
$\PP^1 \times \PP^1$.  For instance, 
 bicubic curves in $\PP^1 \times \PP^1$ are the general
 canonical curves in genus four.
 The polynomial defining their planar representation~is
\begin{equation}
\label{eq:affine33}
 f(x,y) \quad = \quad \sum_{i=0}^3 \sum_{j=0}^3 c_{ij} x^i y^j . 
 \end{equation}
We fix the basis of four holomorphic differentials 
in \eqref{eq:algbasis2}  by taking  $i,j=0,1$. This implies
$$ U \,=\, (u_1,u_2,u_3,u_4)\,\, =\,\, - (1/f_y) \cdot (1,x,y,xy) . $$
The formulas for $V$ and $W$ are obtained by implicit differentiation as in
Example \ref{example:smoothplanecurves}. The resulting polynomial
parametrization (cf.~Remark \ref{rmk:polypara}) is used in
Example \ref{ex:gennuss4} below. The canonical model of $C$ in
$\PP^3$ is defined by $ u_1 u_4 - u_2 u_3$
and a cubic which we identify with (\ref{eq:affine33}).
For instance, starting with $f = 1 - x^3 - y^3 - x^3y^3$, we arrive at the canonical ideal
$I_C = \langle u_1 u_4 - u_2 u_3 \,,\, u_1^3 - u_2^3 - u_3^3 - u_4^3   \rangle $.
This space sextic is studied in \cite[Example 2.5]{CKS}.
\end{example}

We are interested in the Schottky Recovery Problem \cite[\S 2]{CKS}. This 
asks for the equations of the canonical model of the curve $C$, 
provided the Riemann matrix $B$ is known to lie in the Schottky locus.
There is no equational constraint for the Schottky locus in genus three,
and we can start with any $B$.
We saw this in Example \ref{ex:Trott2}, and it is also a key point in \cite{DFS}.
   For higher genus, Schottky recovery is nontrivial.
   See \cite[Example 2.5]{CKS} and the next illustration.
   
\begin{example}   
For a brief case study in genus four, we consider the symmetric matrix
\begin{equation}
\label{eq:BringB}
B_\tau \,\,\, = \,\,\, 2 \pi i \cdot
\tau \cdot \begin{small} \begin{pmatrix}
 4 & 1 & -1\,\, & \,1 \\
 1 & 4 & 1 & -1 \, \\
 -1\,\, & 1 & 4 & \,1 \\
  1 & -1\, \,& 1 & \, 4 
  \end{pmatrix}, \end{small}
\quad \text{where $\tau \in \CC$.}
\end{equation}
The matrix in \eqref{eq:BringB} appears in \cite[equation (1.1)]{BN} and \cite[Theorem 1]{RR}.
For the appropriate constant $\tau$, it represents the Riemann matrix of a prominent genus four curve, \emph{Bring's curve}.

For any given $\tau$, we can compute the $16$ Dubrovin quartics $F[\varepsilon]$ numerically.
Using elimination steps explained in  Lemma \ref{lemma:quarticelimination},
we derive five quartics in $\CC[u_1,u_2,u_3,u_4]$.
According to Proposition \ref{prop:recovery}, these quartics cut out the
canonical curve in $\PP^3$ set-theoretically. Of course, this assumes that
such a curve actually exists.
This happens when the matrix $B_\tau$ lies
in the hypersurface given by the
Schottky-Jung relation, which is given explicitly in \cite[Theorem~2.1]{CKS}.
It imposes a transcendental equation on the parameter $\tau$. One can solve this equation
numerically, either using the method explained in \cite[Example 2.3]{CKS}, or by exploring 
for which $\tau$ our five quartics have a solution in $\PP^3$.
In this manner, we can verify the  solution
$$ \tau_0 \,= \,
-\,0.502210544891808050269557385637 + 0.933704454903021171789990736772 \, i. $$
This constant is defined by $j(\tau_0) = -25/2$. Here $j$ is the modular function of weight zero
that represents the $j$-invariant of an elliptic curve, given by its familiar Fourier series expansion
$$ j(\tau) \,\,=\,\, q^{-1} + 744 + 196884 q + 21493760 q^2 +\, \cdots 
\qquad {\rm where} \quad q = e^{2\pi i \tau}. $$
Riera and Rodr\'iguez \cite[Theorem 2]{RR} determined the value $\tau_0$ for 
Bring's curve. We computed the digits above with {\tt Magma},
using a hypergeometric function formula
for inverting~$\tau \mapsto j(\tau)$.
\end{example}

In order to develop tools for Schottky recovery, it is vital to gain a better understanding of the
 ideal of the Dubrovin threefold. This is our goal in the remainder of this section.
Let $C$ be a smooth non-hyperelliptic curve of genus $g$,
canonically embedded in  $\PP^{g-1}$. Its ideal
 $I_C$ lives in $ \CC[U] = \CC[u_1,\ldots,u_g]$.
This is a subring of the coordinate ring $\CC[U,V,W]$ of  $\mathbb{WP}^{3g-1}$.
The {\em canonical ideal} $I_C$ of a curve $C$ is a classical topic
in algebraic geometry.  For $g=4$,  $I_C$ is the complete intersection of
a quadric and a cubic. By Petri's theorem \cite{GL}, for $g\geq 5$,
the canonical ideal $I_C$ is generated by quadrics, unless $C$ is trigonal or a smooth plane quintic.

Consider a homogeneous polynomial  $f $ of degree $d$ in $I_C$.
The expression $f( x U + y V + t W)$ is a polynomial of degree $d$
in $x,y,t$, and its $\binom{d+2}{2}$ coefficients are
homogeneous polynomials
in $ \CC[U,V,W]$ whose degrees range from $d $ to $3d$.
The {\em polarization of the canonical ideal}, denoted
${\rm Pol}(I_C)$, is the ideal 
in $\CC[U,V,W]$ that is generated by all these coefficients,
where $f$ runs over any generating set of $I_C$.
We also consider the $g \times 3$ matrix $(UVW)$ whose
columns are $U,V$ and $W$, and we write
$\wedge_2(UVW)$ for the ideal generated by its $2 \times 2$-minors.

Our object of interest is the prime ideal $\,\mathcal{I}(\mathcal{D}_C)
\subset \CC[U,V,W]\,$ of the Dubrovin threefold.
In the next theorem we determine an initial ideal of $\mathcal{I}(\mathcal{D}_C)$.
This initial ideal is not a monomial ideal. It is
specified by a partial term order on $\CC[U,V,W]$.
For an introduction to the relevant theory ({\em Gr\"obner bases} and
{\em Khovanskii bases}), we refer to \cite[\S 8]{KM} and the references  therein.
We fix the partial term order given by the following weights on the~variables:
\begin{equation}
\label{eq:weight3}
{\rm weight}(u_i) = 0,\,\, 
{\rm weight}(v_i) = 1, \,\,
{\rm weight}(w_i) = 2 \quad {\rm for} \quad i=1,2,\ldots,g.
\end{equation}
The passage from $\mathcal{I}(\mathcal{D}_C)$ to the  {\em canonical initial ideal}
${\rm in}\bigl(\mathcal{I}(\mathcal{D}_C)\bigr)$ corresponds to
a toric degeneration of the Dubrovin threefold.
Our result states, geometrically speaking, that the variety of 
${\rm in}\bigl(\mathcal{I}(\mathcal{D}_C)\bigr)$
 is the product of the
canonical curve and a weighted projective plane
$\mathbb{WP}^2$. This  threefold might serve
as a combinatorial model for approximating KP~solutions.

\begin{theorem} \label{thm:initialideal}
The canonical initial ideal of $\,C$ is prime.
It is generated by the
polarization of the canonical ideal together with the
constraints that $U$, $V$ and $W$ are parallel. In symbols,
\begin{equation}
\label{eq:inIDC}
{\rm in}\bigl(\mathcal{I}(\mathcal{D}_C)\bigr) \,\,\,= \,\,\,
{\rm Pol}(I_C) \,\,+\,\, \wedge_2(UVW).
\end{equation}
The integral domain
$\CC[U,V,W]/{\rm in}\bigl(\mathcal{I}(\mathcal{D}_C)\bigr) $
is the Segre product of the canonical ring $\CC[U]/I_C$ of the
genus $g$ curve $C$
with a polynomial ring in three variables
that have degrees $1,2,3$.
\end{theorem}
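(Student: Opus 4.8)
The plan is to compute the initial ideal directly from the polynomial parametrization of $\mathcal{D}_C$ and then identify it with the right-hand side of \eqref{eq:inIDC} by a dimension/primality argument. First I would recall the parametrization from Corollary \ref{cor:AlgParam} and Remark \ref{rmk:polypara}: a point of $\mathcal{D}_C$ is $(aU,\,2bU+a^2V,\,cU+3abV+a^3W)$ where $U=U(p)$ lies on the canonical curve, $V=\dot U(p)$, $W=\tfrac12\ddot U(p)$, and $a,b,c$ are the group parameters. Under the weights \eqref{eq:weight3}, the key observation is that the $G$-action degenerates: if we rescale $a\mapsto a$, $b\mapsto t b$, $c\mapsto t^2 c$ and take $t\to 0$ (equivalently, track leading terms in the weight filtration), the parametrized point degenerates to $(aU,\,a^2V,\,a^3W)$. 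That is, the weight-graded degeneration of $\mathcal{D}_C$ is parametrized by $p$ on the canonical curve together with an independent point $[a^2:a^3]$-type scaling — precisely the Segre product of the canonical curve with $\mathbb{WP}^2=\PP(1,2,3)$, where the $\mathbb{WP}^2$ factor has coordinates with weights $1,2,3$ coming from the powers of $a$ acting on the three blocks $U,V,W$.

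Next I would check the two displayed inclusions. For ${\rm Pol}(I_C)$: if $f\in I_C$ is homogeneous of degree $d$, then since $U(p)$ lies on the canonical curve we have $f(U(p))\equiv 0$, hence $f(xU+yV+tW)$ vanishes identically on $\mathcal{D}_C$'s parametrization once we substitute — but more carefully, $f(xU+yV+tW)$ evaluated along the curve is $f$ of a point whose $x$-coordinate part is on the curve, and its coefficients all vanish on the degenerate variety because in the limit $V,W$ are parallel to $U$; so all coefficients of $f(xU+yV+tW)$ lie in ${\rm in}(\mathcal{I}(\mathcal{D}_C))$. This shows ${\rm Pol}(I_C)\subseteq {\rm in}(\mathcal{I}(\mathcal{D}_C))$ provided each such coefficient is genuinely an initial form, which one checks by a weight computation. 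For $\wedge_2(UVW)$: the $2\times 2$ minors $u_iv_j-u_jv_i$, $u_iw_j-u_jw_i$, $v_iw_j-v_jw_i$ are exactly the initial forms (under \eqref{eq:weight3}) of the orbit equations \eqref{eq:orbiteqns} that already vanish on $\mathcal{D}_C$ — the leading terms of ${\tilde u_i}u_j-{\tilde u_j}u_i$-type relations when we also let the base point $(\widetilde U,\widetilde V,\widetilde W)$ vary, or more directly these minors vanish on the degenerate parametrization $(aU,a^2V,a^3W)$ since all three vectors are scalar multiples of $U$. So the right-hand side of \eqref{eq:inIDC} is contained in the left.

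For the reverse inclusion and the structural statements, I would argue by comparing Hilbert functions / showing the right-hand side is already prime of the correct dimension. The ring $R := \CC[U,V,W]/({\rm Pol}(I_C)+\wedge_2(UVW))$: modulo $\wedge_2(UVW)$ we can write $V = s\,U$ and $W = r\,U$ as "generic" scalar multiples, which identifies $R$ with the subring of $(\CC[U]/I_C)[r,s]$ spanned by monomials of the form $u_\alpha$, $u_\alpha s$, $u_\alpha r$ with the grading making $s,r$ have degrees $2,3$ — this is exactly the Segre (Veronese-type) product $\CC[U]/I_C \,\widehat\otimes\, \CC[x_1,x_2,x_3]$ with $\deg x_i = i$. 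Since $\CC[U]/I_C$ is a domain (canonical curve is irreducible) and the polynomial ring is a domain, their Segre product is a domain; hence the right-hand side of \eqref{eq:inIDC} is prime, of dimension $1 + 3 = 4$ (as a cone), matching $\dim \mathcal{D}_C = 4$ as a cone. A prime ideal contained in ${\rm in}(\mathcal{I}(\mathcal{D}_C))$ (which has the same dimension, since initial ideals preserve dimension) and defining a variety of the same dimension must equal it — this forces equality in \eqref{eq:inIDC} and simultaneously yields that ${\rm in}(\mathcal{I}(\mathcal{D}_C))$ is prime and that the quotient is the claimed Segre product.

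The main obstacle I anticipate is the reverse inclusion ${\rm in}(\mathcal{I}(\mathcal{D}_C)) \subseteq {\rm Pol}(I_C) + \wedge_2(UVW)$, i.e. verifying that the right-hand side is not strictly smaller. The dimension argument above handles this \emph{if} one knows the right-hand side is prime of dimension $4$; the genuinely delicate point is therefore showing the Segre product has the expected dimension and contains no embedded or lower-dimensional components — equivalently, that ${\rm Pol}(I_C)$ together with the parallelism minors cuts out exactly the product and nothing extraneous (no component supported on $\{U=0\}$ or where the curve degenerates). I expect this to follow from the explicit description $V=sU$, $W=rU$ reducing everything to the classical fact that $I_C$ is prime, but making the reduction rigorous — in particular handling the weighted grading and confirming that ${\rm Pol}(I_C)$ is generated in the degrees it should be — is where the real work lies.
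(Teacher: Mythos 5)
Your overall skeleton --- show that $K := {\rm Pol}(I_C)+\wedge_2(UVW)$ is prime of dimension $4$ (Segre product of two domains), show $K\subseteq{\rm in}\bigl(\mathcal{I}(\mathcal{D}_C)\bigr)$, and conclude equality from the dimension count --- is the same as the paper's, and the primality and final comparison steps are fine. But your opening degeneration is wrong: substituting $b\mapsto tb$, $c\mapsto t^2c$ and letting $t\to 0$ sends the orbit point to $(aU,a^2V,a^3W)$, which in $\mathbb{WP}^{3g-1}$ is just the original point of the lifted canonical curve (weighted rescaling by $a$), a one-dimensional locus on which $V=\dot U$ and $W=\tfrac12\ddot U$ are \emph{not} parallel to $U$. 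The flat limit associated with the weights \eqref{eq:weight3} goes the other way: after the torus rescaling one must blow up the group parameters (set $\beta=2tb$, $\gamma=t^2c$), and the limit is $(aU,\beta U,\gamma U)$, i.e.\ the Segre product. Your second paragraph then silently uses this correct picture (``in the limit $V,W$ are parallel to $U$''), contradicting your first.

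More seriously, the inclusion $K\subseteq{\rm in}\bigl(\mathcal{I}(\mathcal{D}_C)\bigr)$ --- which you dismiss as ``a weight computation'' and do not identify as the hard direction --- is where all the work lies, and your justifications for it do not function. A minor $u_iv_j-u_jv_i$ is not itself an element of $\mathcal{I}(\mathcal{D}_C)$ (the orbit equations \eqref{eq:orbiteqns} cut out a single $G$-orbit, with the tilded quantities held constant, not the whole threefold), and a polarization coefficient such as $f(V\otimes V)$ does not in general vanish on $\mathcal{D}_C$. Knowing that these polynomials vanish on the degenerate variety $V(K)$ is exactly what is to be proved and cannot be used as input. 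What is needed is, for each generator of $K$, an element of $\mathcal{I}(\mathcal{D}_C)$ whose top-weight part is that generator and whose remaining terms have strictly lower weight; that is, one must construct the trailing terms. The paper does this in Lemmas \ref{lem:trailingterms} and \ref{lemma:polarization}: each candidate expression is shown to be a relative $G$-invariant modulo the previously established relations, so its restriction to the lifted canonical curve is a section of $\omega_C^{\otimes d}$, and Max Noether's theorem (this is where non-hyperellipticity enters, an assumption you never invoke) guarantees that this section equals a polynomial $A(U)$ in the degree-one variables alone; subtracting $A(U)$, which has weight zero, produces the required element of the ideal. Without this mechanism, or a substitute for it, your argument only shows that $V\bigl({\rm in}(\mathcal{I}(\mathcal{D}_C))\bigr)$ and $V(K)$ are both four-dimensional, which does not yield \eqref{eq:inIDC}.
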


Before we present the proof of this theorem,
we discuss its implications in low genus.

\begin{example}[$g=3$]
By Theorem \ref{thm:quartic}, the ideal $\,\mathcal{I}(\mathcal{D}_C)\,$ has
$17$ minimal generators. Its initial ideal 
${\rm in}\bigl(\mathcal{I}(\mathcal{D}_C)\bigr)$
has $24$ minimal generators,
namely the $15 = \binom{4+2}{2}$ equations obtained by polarizing the
ternary quartic that defines $C$ in $\PP^2$, and the nine $2 \times 2$ minors of $(UVW)$.
The following piece of {\tt Macaulay2} code computes the
two ideals above for the Trott curve:
\begin{small}
\begin{verbatim}
R = QQ[u1,u2,u3,v1,v2,v3,w1,w2,w3,
       Degrees => {1,1,1,2,2,2,3,3,3},  Weights => {0,0,0,1,1,1,2,2,2}];
f = 144*u1^4+350*u1^2*u2^2-225*u1^2*u3^2+144*u2^4-225*u2^2*u3^2+81*u3^4;
g = diff(u1,f)*v1 + diff(u2,f)*v2 + diff(u3,f)*v3;
I = ideal(diff(u1,f)+u2*v3-u3*v2,diff(u2,f)+u3*v1-u1*v3,diff(u3,f)+u1*v2-u2*v1,
diff(u1,g)+2*(u2*w3-u3*w2),diff(u2,g)-2*(u1*w3-u3*w1),diff(u3,g)+2*(u1*w2-u2*w1));
\end{verbatim}
The ideal {\tt I} is generated by \eqref{eq:minorsderivatives}
and \eqref{eq:minorsderivatives2}. We next compute
$\,\mathcal{I}(\mathcal{D}_C)\,$ via the
saturation step in Theorem~\ref{thm:quartic}.
Thereafter we display $\,{\rm in}\bigl(\mathcal{I}(\mathcal{D}_C)\bigr)$.
This verifies Theorem~\ref{thm:initialideal} for the Trott curve:
\begin{verbatim}
IDC = saturate(I,ideal(u1,u2,u3));
codim IDC, degree IDC, betti mingens IDC
inIDC = ideal leadTerm(1,IDC); toString mingens inIDC
codim inIDC, degree inIDC, betti mingens inIDC, isPrime inIDC
\end{verbatim}
\end{small}
Each of the $24$ minimal generators of {\tt inIDC}
arises as the initial form of a polynomial in {\tt IDC}.
\end{example}

\begin{example}[$g=4$] \label{ex:gennuss4}
We represent a genus four canonical curve in $\PP^3$ 
by the quadric $q = u_1 u_4 - u_2 u_3$ and a
general cubic $f = f(u_1,u_2,u_3,u_4)$. Consider their Jacobian matrix
$$ J \quad = \quad 
\begin{pmatrix}
 \frac{\partial q}{\partial u_1} &
 \frac{\partial q}{\partial u_2} &
 \frac{\partial q}{\partial u_3} &
 \frac{\partial q}{\partial u_4} \smallskip \\
 \frac{\partial f}{\partial u_1} &
 \frac{\partial f}{\partial u_2} &
 \frac{\partial f}{\partial u_3} &
 \frac{\partial f}{\partial u_4} 
\end{pmatrix},
$$
and let $J_{kl}$ denote the determinant
of the $2 \times 2$ submatrix of $J$ with column indices $k$ and $l$.
The generator of lowest degree in $\, \mathcal{I}(\mathcal{D}_C)\,$ 
is the quadric $q$. In degree three, there are eight minimal generators:
the cubic $f$, the polarization 
$\,u_1 v_4+u_4 v_1 -u_2 v_3- u_3 v_2\,$ of $q$, 
as well as
\begin{equation}
\label{eq:jacobrel}
\begin{matrix}
u_1 v_2-u_2 v_1-J_{34}\,, \,\,
u_1 v_3-u_3 v_1+J_{24}\,,\,\,
u_2 v_3-u_3 v_2-J_{14}\,, 
\\
u_1 v_4-u_4 v_1-J_{23}\,, \,\,
u_2 v_4-u_4 v_2+J_{13}\,, \,\,
u_3 v_4-u_4 v_3-J_{12}.\,
\end{matrix}
\end{equation}
These six cubics illustrate the principle of behind our degeneration
to the canonical initial ideal.
The $2 \times 2$ minors $u_k v_l - u_l v_k$
are the initial forms with respect to
the weights (\ref{eq:weight3}) 
of the polynomials in (\ref{eq:jacobrel})
because the trailing term $J_{kl}$  
only involves the variables $u_1,u_2,u_3,u_4$.

The initial ideal (\ref{eq:inIDC}) has
$34$ minimal generators, namely
the six polarizations of $q$, the 
ten polarizations of $f$, the
$18 $ minors of the $4 \times 3$ matrix $(UVW)$.
Each of these $34$ polynomials is in fact the
initial form of a minimal generator of
the Dubrovin ideal $\mathcal{I}(\mathcal{D}_C)$.
For instance, for the quartic $u_i w_j - u_j w_i$
we add many trailing terms of the forms
$u_i u_j v_k$ and $u_i u_j u_k u_l$.
The largest degree of a minimal generator
is nine. It arises from the polynomial
$\, f(w_1,w_2,w_3,w_4) $.
\end{example}

We now embark towards the proof of Theorem \ref{thm:initialideal}
with a sequence of three lemmas.
Our standing assumption is that $C$ is smooth and non-hyperelliptic.
The Dubrovin threefold $\mathcal{D}_C$ is constructed as in Section \ref{sec3}.
Given the polynomial $f(x,y)$ defining $C^o$,
we view $y$ as a function of $x$,
we consider the differential forms
$\omega_i$ in \eqref{eq:algbasis1},  and we 
form the derivatives $\dot{H_i}, \,\ddot{H_i}$ 
of $H_i(x) = \frac{h_i(x,y)}{f_y(x,y)}$ as in~\eqref{eq:algbasis1}. This defines the local map in~\eqref{eq:happymap},
with $p=(x,y) \in C^o$.
Then $\mathcal{D}_C$ is obtained by acting with the group $G$ 
on the image curve in $\mathbb{WP}^{3g-1}$.
This action corresponds to changing local coordinates  on $C$. 
We now use this to find equations in $\mathcal{I}(\mathcal{D}_C)$.
For any homogeneous polynomial $F\in \mathbb{C}[U,V,W]$,
let $F_{|C}$ denote its pullback to $C$ via \eqref{eq:happymap}.

\begin{lemma}\label{lemma:sectioncanonical}
	Let $F \in \mathbb{C}[U,V,W]_d$ 	which 
	is a relative $G$-invariant,~i.e.
	\begin{equation}\label{eq:conditioninvariace}
	(\textsl{g}\cdot F)_{|C} \,\,=\,\, a^d (F_{|C}) \qquad \text{for all} \,\, \textsl{g} \in G. \vspace{-0.08cm}
	\end{equation} 
	\begin{enumerate} 
		\item If $F_{|C}=0$, then $F \in \mathcal{I}(\mathcal{D}_C)$. \vspace{-0.2cm}
		\item In general, there exists a polynomial $A(U) \in\mathbb{C}[U]_d$ such that $F-A(U) \in \mathcal{I}(\mathcal{D}_C)$.
	\end{enumerate}
\end{lemma}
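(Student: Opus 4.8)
\emph{Strategy.} Part (1) is essentially immediate. By construction $\mathcal{D}_C$ is the Zariski closure of the union of the $G$-translates of the image of the map \eqref{eq:happymap} (attached to the local coordinate $x$). For $\textsl{g}\in G$, evaluating $F$ at $\textsl{g}\cdot\Phi(p)$ is the same as evaluating at $p$ the polynomial $\textsl{g}\cdot F$ obtained by the substitution \eqref{eq:actionG}; by the defining relation \eqref{eq:conditioninvariace} this equals $a^d(F_{|C})(p)$. So if $F_{|C}=0$, then $F$ vanishes on the whole dense subset $\bigcup_{\textsl{g}}\textsl{g}\cdot\Phi(C^o)$ of $\mathcal{D}_C$, hence on $\mathcal{D}_C$, i.e.\ $F\in\mathcal{I}(\mathcal{D}_C)$. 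Part (2) reduces to part (1): every $A(U)\in\CC[U]_d$ is itself a relative $G$-invariant, since \eqref{eq:actionG} multiplies it by $a^d$; so if I can produce $A(U)\in\CC[U]_d$ with $A_{|C}=F_{|C}$, then $F-A(U)$ is a relative $G$-invariant vanishing on $C$ and part (1) gives $F-A(U)\in\mathcal{I}(\mathcal{D}_C)$. The whole task is therefore to exhibit such an $A(U)$.

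\emph{From relative invariants to pluricanonical forms.} The plan is to show that $\eta:=F_{|C}\cdot(dx)^{\otimes d}$ is a \emph{holomorphic} section of $K_C^{\otimes d}$. Granting this, Noether's theorem — applicable because $C$ is non-hyperelliptic, cf.\ \cite{GL} — says that the multiplication map $\mathrm{Sym}^d H^0(C,\Omega^1_C)\to H^0(C,K_C^{\otimes d})$ is surjective, so $\eta=A(\omega_1,\dots,\omega_g)$ for some homogeneous $A$ of degree $d$ in the differentials \eqref{eq:algbasis1}; unwinding the definition \eqref{eq:UVW} of $U$ (and absorbing a sign into $A$) gives $A_{|C}=F_{|C}$, as required. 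That $\eta$ is a well-defined, a priori meromorphic, $d$-differential on $C$, independent of the chosen local coordinate, follows by combining the chain-rule computation for $(H_i,\dot H_i,\ddot H_i)$ preceding \eqref{eq:groupG} — which exhibits the effect of an analytic change of local coordinate on the coordinate vector $-(H_i,\dot H_i,\tfrac12\ddot H_i)$ as the action of an element of $G$ — with the relation \eqref{eq:conditioninvariace}: the two powers of the coordinate-change derivative cancel, so $F_{|C}(dz)^{\otimes d}$ does not depend on $z$.

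\emph{Holomorphy of $\eta$, the main obstacle.} The one step needing care is regularity of $\eta$ at the finitely many points of $C$ where $x$ is not a local coordinate — the ramification points of $x\colon C\to\PP^1$ and the points over $x=\infty$. Away from these, $\omega_i=H_i\,dx$ with $H_i$ holomorphic, hence $\dot H_i,\ddot H_i$ and so $F_{|C}$ are holomorphic and $\eta$ is regular. At a bad point $p$, pick any analytic local coordinate $z$ at $p$ and write $\omega_i=\widehat H_i(z)\,dz$; then $\widehat H_i$ is holomorphic near $p$ because $\omega_i$ is holomorphic and $dz$ generates $\Omega^1_C$ there, so the coordinate vector $-(\widehat H_i,\dot{\widehat H}_i,\tfrac12\ddot{\widehat H}_i)$ is holomorphic near $p$, and hence so is the value of $F$ on it. Multiplying that holomorphic function by $(dz)^{\otimes d}$ — a nonvanishing local section of $K_C^{\otimes d}$ — yields a holomorphic $d$-differential near $p$ that, by the coordinate-independence just established, agrees with $\eta$ on the punctured neighborhood of $p$ where $x$ is also valid; two meromorphic $d$-differentials agreeing there agree everywhere near $p$, so $\eta$ extends holomorphically over $p$. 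Therefore $\eta\in H^0(C,K_C^{\otimes d})$, and Noether's theorem completes the argument as above. (Alternatively one can bound $\mathrm{ord}_p(F_{|C})$ from below by writing $\Phi(q)$ near $p$ as a $G$-translate of a vector converging to a point of $\mathcal{D}_C$ at which $F$ is regular, but the coordinate-change argument is cleaner.) This is the same mechanism that produces the cubics \eqref{eq:minorsderivatives} of Theorem~\ref{thm:quartic} out of the ternary quartic, and it is the source of the comparison with Lemma~\ref{lem:trailingterms}.
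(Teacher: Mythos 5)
Your proposal is correct and follows essentially the same route as the paper: part (1) via the density of the $G$-orbits of the image of \eqref{eq:happymap} in $\mathcal{D}_C$, and part (2) by interpreting $F_{|C}$ as a holomorphic section of $\omega_C^{\otimes d}$ and invoking Max Noether's theorem for the non-hyperelliptic curve $C$. The only difference is that you explicitly verify holomorphy of $F_{|C}\,(dx)^{\otimes d}$ at the ramification points of $x$ and over $x=\infty$ — a detail the paper's proof leaves implicit in the assertion that $F_{|C}$ ``represents a section in $H^0(C,\omega_C^d)$.''
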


\begin{proof}
The pullback $F_{|C}$ is an algebraic expression in terms of the local coordinate $x$. The relative 
$G$-invariance of \eqref{eq:conditioninvariace} means the following: if the local coordinate $x$ is changed to $z = \phi(x)$, then $F_{|C}$ is scaled by a power of $\phi'(0)$.
This derivative is exactly the cocycle corresponding to the canonical bundle $\omega_C$.
Hence  \eqref{eq:conditioninvariace} means that $F_{|C}$ 
represents a section in $H^0(C,\omega_C^d)$, independent of the
 local coordinate $x$. In particular, if $F_{|C}=0$, then
$F$ vanishes on the whole Dubrovin threefold~$\mathcal{D}_C$. This proves the first point in
Lemma \ref{lemma:sectioncanonical}.
	
To prove the second point, recall that $C$ is not hyperelliptic.
By Max Noether's Theorem, the multiplication map
$\operatorname{Sym}^d H^0(C,\omega_C) \longrightarrow H^0(C,\omega_C^d)$
is surjective. Since $u_1,\dots,u_g$ corresponds to the basis $\omega_1,\dots,\omega_g$ of $H^0(C,\omega_C)$,
	 there is a polynomial $A(U)\in \mathbb{C}[U]_d$ whose restriction to $C$ coincides with $F_{|C}$. Now if is enough to apply the first point to $F-A(U)$. 
\end{proof}

\begin{lemma} \label{lem:trailingterms} Fix a pair of indices $i,j$
	satisfying $1\leq i<j\leq g$.
	There exist homogeneous polynomials $A \in \CC[U]_3$ and $B \in \CC[U]_5$
	such that the following polynomials belong to $\mathcal{I}(\mathcal{D}_C)$:
	\begin{align}
		\begin{vmatrix} u_i\,\, & v_i \\ u_j\, & v_j \end{vmatrix} &\,-\, A(U), \label{eq:equationUV} \\
		\begin{vmatrix} u_i & w_i \\ u_j & w_j \end{vmatrix} &\,-\, \frac{1}{2}\sum_{h=1}^g \frac{\partial A}{\partial u_h}(U)\cdot v_h, \label{eq:equationUW} \\
		\begin{vmatrix} v_i\, & w_i \\ v_j & w_j \end{vmatrix} & \,+\,
		\frac{1}{3} \sum_{h=1}^g \frac{\partial A}{\partial u_h}(U)\cdot w_h 
		\,-\,\frac{1}{4} \sum_{h=1}^{g}\sum_{k=1}^g \frac{\partial^2\! A}{\partial u_h\partial u_k}(U)\cdot v_hv_k \,-\, B(U). 
		\label{eq:equationVW}
	\end{align}
\end{lemma}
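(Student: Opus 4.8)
The plan is to produce the three polynomials by examining the pullbacks of the relevant $2\times 2$ minors under the map \eqref{eq:happymap}, and then applying Lemma \ref{lemma:sectioncanonical}. First I would set up notation: write $H_i = H_i(x)$ for the coefficient functions of the differentials, so that $U = -(H_i)$, $V=-(\dot H_i)$, $W=-\tfrac12(\ddot H_i)$ along $C$, with $x$ the local coordinate. The key elementary observation is that the Wronskian-type expression $u_i v_j - u_j v_i$ pulls back to $H_i\dot H_j - H_j\dot H_i$, which is the Wronskian of two sections of $\omega_C$; it is a section of $\omega_C^{\otimes 3}$, and it is a relative $G$-invariant of degree three in the sense of \eqref{eq:conditioninvariace} (the Wronskian of two $1$-forms transforms by $\phi'(0)^3$ under a change of local coordinate, with no contribution from $\phi''$, $\phi'''$). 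Hence Lemma \ref{lemma:sectioncanonical}(2) furnishes $A(U)\in\CC[U]_3$ with \eqref{eq:equationUV} in $\mathcal{I}(\mathcal D_C)$. This also pins down $A$: its restriction to $C$ equals $-(H_i\dot H_j-H_j\dot H_i)$, a concrete cubic form in the $u_\ell$.

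Next I would differentiate the defining relation $A(U)_{|C} = -(H_i\dot H_j - H_j\dot H_i)$ with respect to $x$, using the chain rule $\frac{d}{dx}A(U)_{|C} = \sum_h \frac{\partial A}{\partial u_h}(U)\cdot \dot u_h{}_{|C} = -\sum_h \frac{\partial A}{\partial u_h}(U)\cdot \dot H_h$ (since $u_h{}_{|C} = -H_h$), and on the right-hand side $\frac{d}{dx}(H_i\dot H_j - H_j\dot H_i) = H_i\ddot H_j - H_j\ddot H_i$. Recognizing $v_h{}_{|C} = -\dot H_h$ and $w_h{}_{|C} = -\tfrac12\ddot H_h$, this identity rearranges into exactly the statement that the degree-$4$ polynomial $\bigl(u_iw_j - u_jw_i\bigr) - \tfrac12\sum_h \frac{\partial A}{\partial u_h}(U)v_h$ restricts to zero on $C$. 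That polynomial is again a relative $G$-invariant (one checks the $b,c$ parameters drop out, e.g.\ because the $2\times2$ minor of $(UVW)$ columns transforms cleanly and the correction term is forced), so Lemma \ref{lemma:sectioncanonical}(1) puts \eqref{eq:equationUW} in $\mathcal{I}(\mathcal D_C)$. Differentiating once more — applying $\frac{d}{dx}$ to the identity just obtained — produces, after collecting the second-derivative terms via the second-order chain rule $\frac{d^2}{dx^2}A(U)_{|C} = \sum_{h,k}\frac{\partial^2 A}{\partial u_h\partial u_k}\dot u_h\dot u_k + \sum_h \frac{\partial A}{\partial u_h}\ddot u_h$, a degree-$5$ polynomial whose restriction to $C$ is a section of $\omega_C^{\otimes 5}$; Lemma \ref{lemma:sectioncanonical}(2) then supplies the correction term $B(U)\in\CC[U]_5$ and yields \eqref{eq:equationVW}. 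The coefficients $\tfrac13$ and $-\tfrac14$ will emerge from the combinatorial bookkeeping of the $-\tfrac12$ normalization in $W$ and the $\ddot y$-type second-derivative terms.

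The main obstacle I anticipate is not the existence of the correction polynomials — that is handed to us by Noether's theorem via Lemma \ref{lemma:sectioncanonical} — but verifying the \emph{precise shape} of the lower-order terms, i.e.\ that after one differentiation the $V$-linear correction is exactly $\tfrac12\sum_h \frac{\partial A}{\partial u_h}v_h$ and after two differentiations the $W$-linear and $VV$-quadratic corrections have the stated coefficients, with everything else absorbable into $B(U)$. This requires carefully tracking how $\dot u_h$, $\ddot u_h$ relate to $v_h$, $w_h$ (there are factors of $1$ and $-\tfrac12$ to keep straight) and confirming at each stage that the residual expression is genuinely a relative $G$-invariant so that part (1) of Lemma \ref{lemma:sectioncanonical} applies; the cleanest route is to note that each successive polynomial is obtained from the previous $\mathcal{I}(\mathcal D_C)$-member by formal $x$-differentiation of its pullback, and that this operation corresponds on $\CC[U,V,W]$ to the derivation $D = \sum_h(v_h\partial_{u_h} + \tfrac{?}{}w_h\partial_{v_h} + \cdots)$ compatible with the $G$-action, so invariance is preserved automatically. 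Pinning down that derivation precisely is the one genuinely technical point; once it is in hand, \eqref{eq:equationUV}–\eqref{eq:equationVW} follow by reading off the $D$-images.
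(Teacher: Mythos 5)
Your treatment of \eqref{eq:equationUV} and \eqref{eq:equationUW} is essentially the paper's argument and is correct: the minor $u_iv_j-u_jv_i$ restricts to the Wronskian $H_i\dot H_j-H_j\dot H_i$ and is a relative $G$-invariant of weight $3$, so Lemma \ref{lemma:sectioncanonical}(2) supplies $A$; one $x$-differentiation of the resulting identity, together with $v_h{}_{|C}=-\dot H_h$ and $w_h{}_{|C}=-\tfrac12\ddot H_h$, shows that \eqref{eq:equationUW} restricts to zero, and the relative invariance needed for Lemma \ref{lemma:sectioncanonical}(1) holds because the $b$-dependent term produced by the $G$-action is a multiple of \eqref{eq:equationUV}, which already vanishes on $C$. (One small point worth making explicit: the invariance is genuinely only \emph{relative}, i.e.\ valid after restriction to $C$; it is not a polynomial identity, and it is essential, since vanishing on the lifted canonical curve alone does not give vanishing on its $G$-orbit $\mathcal{D}_C$.)

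The gap is in \eqref{eq:equationVW}. Differentiating the identity obtained from \eqref{eq:equationUW} once more produces the term $\tfrac12(H_i\dddot H_j-H_j\dddot H_i)$, and third derivatives $\dddot H_h$ are not among the coordinates of the parametrization \eqref{eq:happymap}, so the resulting relation cannot be read off as a polynomial in $U,V,W$. Concretely, the differentiated identity reads
\begin{equation*}
(v_iw_j-v_jw_i)_{|C}\;-\;\Bigl(\textstyle\sum_h\tfrac{\partial A}{\partial u_h}(U)w_h\Bigr)_{|C}\;-\;\tfrac12\Bigl(\textstyle\sum_{h,k}\tfrac{\partial^2 A}{\partial u_h\partial u_k}(U)v_hv_k\Bigr)_{|C}\;+\;\tfrac12\bigl(H_i\dddot H_j-H_j\dddot H_i\bigr)\;=\;0,
\end{equation*}
whose coefficients ($-1$ and $-\tfrac12$) do not match the $+\tfrac13$ and $-\tfrac14$ of \eqref{eq:equationVW}; the discrepancy is exactly the unexpressible $\dddot H$ term, and there is no a priori reason it equals a polynomial in $H_h,\dot H_h,\ddot H_h$. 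The same obstruction kills the proposed derivation $D$ on $\CC[U,V,W]$: its putative $\partial_{w_h}$-component would have to be $-\tfrac12\dddot H_h$, which is not a coordinate. The paper's route for \eqref{eq:equationVW} is different and avoids differentiation entirely: one takes the quintic \eqref{eq:quinticsecfion} with the coefficients $\tfrac13$ and $-\tfrac14$ as given, applies a group element $\textsl{g}\in G$, and uses Euler's relation together with its second-order analogues to show that the result is $a^5$ times the quintic plus explicit multiples of \eqref{eq:equationUV} and \eqref{eq:equationUW}. Since the latter vanish on $C$, the quintic satisfies \eqref{eq:conditioninvariace}; its restriction to $C$ is a (generally nonzero) section of $\omega_C^{\otimes 5}$, and Lemma \ref{lemma:sectioncanonical}(2) then produces $B(U)$. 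You would need to replace your second differentiation by this invariance computation to complete the proof.
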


\begin{proof}
	We start with the ansatz \eqref{eq:equationUV}. If $\textsl{g}\in G$ is as in \eqref{eq:groupG}, then 
	an easy computation shows
	\[
	\textsl{g} \cdot  \begin{vmatrix} u_i & v_i \\ u_j & v_j \end{vmatrix} \,\,=\,\, a^3 \begin{vmatrix} u_i & v_i \\ u_j & v_j \end{vmatrix} .
	\] 
	Hence, by Lemma \ref{lemma:sectioncanonical}, there exists 
	a polynomial $A(U)\in \mathbb{C}[U]_3$ such that 
	the difference \eqref{eq:equationUV} belongs to $\mathcal{I}(\mathcal{D}_C)$.
	We note that this can also be seen via the Gaussian maps of \cite{Wahl}.
	
	Next, for \eqref{eq:equationUW}, we use
	Euler's relation $\,A(U) = \frac{1}{3}\sum_{h=1}^g \frac{\partial A}{\partial u_i}(U)u_i$. 
	The action by $\textsl{g}$ gives
	$$	 \begin{small} \!
	\textsl{g}\cdot \left( \begin{vmatrix} u_i \! & \! w_i \\ u_j\! &  \! w_j \end{vmatrix}
	- \frac{1}{2}\sum_{h=1}^g \frac{\partial A}{\partial u_h}(U)v_h \!\right)
	= a^4\left( \begin{vmatrix} u_i \!&\! w_i \\ u_j \!&\! w_j \end{vmatrix}
	- \frac{1}{2}\sum_{h=1}^g \frac{\partial A}{\partial u_h}(U)v_h \! \right)
	+ \,3a^2b  \left( \begin{vmatrix} u_i \! &\! v_i \\ u_j \! & \! v_j \end{vmatrix}-A(U)
	\!\right).
	\end{small}	
	$$
	Restricting this identity to $C$ and using \eqref{eq:equationUV}, we see that the condition \eqref{eq:conditioninvariace} is satisfied. Furthermore, if we restrict \eqref{eq:equationUW} to $C$, then we get $1/2$ times 
	$$	
	\begin{vmatrix} H_i & \ddot H_i \\  H_j & \ddot H_j \end{vmatrix}
	\, +\, \sum_{h=1}^g \frac{\partial A}{\partial u_h}(H_1,\dots,H_g)\dot{H}_h
	\quad = \quad \frac{d}{dx} \left(\,
	\begin{vmatrix} H_i & \dot{H}_i \\ H_j & \dot H_j \end{vmatrix} \,+\, A(H_1,\dots,H_g) \,\right).
	$$
	The parenthesized expression on the right is the restriction of  \eqref{eq:equationUV} to $C$.
	It vanishes identically on $C$, and hence so does its derivative.
	Therefore, \eqref{eq:equationUW} lies in $\mathcal{I}( \mathcal{D}_C)$,
	by Lemma~\ref{lemma:sectioncanonical}.
	
	We conclude with \eqref{eq:equationVW}. We apply the group element $\textsl{g} \in G$ in \eqref{eq:actionG} to the polynomial
	\begin{equation}
		\label{eq:quinticsecfion}
		\begin{vmatrix} v_i\, & w_i \\ v_j & w_j \end{vmatrix}\,+\,\frac{1}{3} \sum_{h=1}^g \frac{\partial A}{\partial u_h}(U)\cdot w_h 
		\,-\,\frac{1}{4} \sum_{h=1}^{g}\sum_{k=1}^g \frac{\partial^2\! A}{\partial u_h\partial u_k}(U)\cdot v_h v_k.
	\end{equation}
	Using Euler's relation and its
	generalizations $\,\sum_{h=1}^g \sum_{k=1}^g \frac{\partial^2\! A}{\partial u_h \partial u_k} u_h v_k = 2\sum_{h=1}^g \frac{\partial A}{\partial u_h}(U)v_h\,$ and $\,\sum_{h=1}^g \sum_{k=1}^g \frac{\partial^2\! A}{\partial u_h \partial u_k}(U)u_hu_k = 6A(U)$,
	we find that the result of this application equals
$$ \!\!\! \!\begin{matrix}	
		&  a^5 \biggl( \, \begin{vmatrix} v_i\, & w_i \\ v_j & w_j \end{vmatrix}+\frac{1}{3} \sum_{h=1}^g \frac{\partial A}{\partial u_h}(U)\cdot w_h -\frac{1}{4} \sum_{h=1}^{g}\sum_{k=1}^g \frac{\partial^2\! A}{\partial u_h \partial u_k}(U)\cdot v_h v_k \biggr) \qquad \qquad \qquad \quad \\
		& \! +\, 6ab^2 \left( \,\begin{vmatrix} u_i \! &\! v_i \\ u_j \!& \! v_j \end{vmatrix} - A(U) \! \right)
		\,-\, a^2 c \,\,\,\left( \,\begin{vmatrix} u_i \! &\! v_i \\ u_j \! & \! v_j \end{vmatrix} - A(U)\! \right)
		\, +\,2a^3 b \left( \,\begin{vmatrix} u_i \!&\! w_i \\ u_j\! & \! w_j \end{vmatrix} 
		\,-\, \frac{1}{2} \sum_{h=1}^g \frac{\partial A}{\partial u_h}(U) v_h \!\right).  
\end{matrix} $$ 
		The last three parentheses agree with \eqref{eq:equationUV},\,\eqref{eq:equationUW}	
	and are hence zero on $C$. This implies that~\eqref{eq:quinticsecfion} is a relative
	$G$-invariant on $C$.
	Applying Lemma \ref{lemma:sectioncanonical} again concludes the proof.
\end{proof}

\begin{lemma}\label{lemma:polarization}
The polarization  of the canonical ideal belongs to the initial ideal of $\,\mathcal{I}(\mathcal{D}_C)$.
\end{lemma}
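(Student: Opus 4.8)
The plan is to show that every coefficient obtained by polarizing a generator of the canonical ideal lies in the initial ideal. Fix a homogeneous $f\in I_C$ of degree $d$ and expand $f(xU+yV+tW)=\sum_{p+q+r=d}x^py^qt^r\,F_{pqr}$, so that ${\rm Pol}(I_C)$ is generated by the $F_{pqr}$ as $f$ runs over a generating set of $I_C$. Each $F_{pqr}$ is homogeneous of weighted degree $d+q+2r$ and is $w$-homogeneous of $w$-degree $m:=q+2r$ for the weight \eqref{eq:weight3}. It therefore suffices to produce, for every $(p,q,r)$, an element $P\in\mathcal I(\mathcal D_C)$ of the form $P=F_{pqr}+(\text{terms of }w\text{-degree }<m)$, for then ${\rm in}(P)=F_{pqr}$ and hence $F_{pqr}\in{\rm in}\bigl(\mathcal I(\mathcal D_C)\bigr)$. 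For $m=0$ this is immediate: $F_{d00}=f(U)$ lies in $I_C\subseteq\mathcal I(\mathcal D_C)$, since $f(U)$ is a relative $G$-invariant vanishing on $C$, so Lemma~\ref{lemma:sectioncanonical}(1) applies.

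For $m\ge1$ I would invoke Lemma~\ref{lemma:sectioncanonical}(2): it is enough to exhibit a \emph{$G$-invariant completion} $\widehat F_{pqr}=F_{pqr}-C_{pqr}$, with $C_{pqr}\in\CC[U,V,W]$ homogeneous of weighted degree $d+m$ and of $w$-degree $<m$, that satisfies the relative $G$-invariance \eqref{eq:conditioninvariace}. Lemma~\ref{lemma:sectioncanonical}(2) then supplies $A(U)\in\CC[U]_{d+m}$ with $\widehat F_{pqr}-A(U)\in\mathcal I(\mathcal D_C)$, and $P:=\widehat F_{pqr}-A(U)=F_{pqr}-C_{pqr}-A(U)$ has the required shape since $C_{pqr}$ and $A(U)$ both have $w$-degree $<m$. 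To build $\widehat F_{pqr}$, note that applying $\textsl{g}\in G$ as in \eqref{eq:groupG} to $f(xU+yV+tW)$ is the substitution $(x,y,t)\mapsto(ax+2by+ct,\ a^2y+3abt,\ a^3t)$ in that polynomial; extracting the coefficient of $x^py^qt^r$ gives
$$\textsl{g}\cdot F_{pqr}\;=\;a^{\,d+m}\,F_{pqr}\;+\!\!\sum_{q'+2r'<m}\!\!\gamma_{p'q'r'}(a,b,c)\,F_{p'q'r'},$$
where each $\gamma_{p'q'r'}$ is homogeneous of weighted degree $m-q'-2r'$ in $(a,b,c)$ and is divisible by $b$ or $c$. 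I would then induct on $m$, carrying the strengthened hypothesis that every $F_{p'q'r'}$ with $q'+2r'<m$ is congruent modulo $\mathcal I(\mathcal D_C)$ to an explicit $R_{p'q'r'}$ of $w$-degree $<q'+2r'$ that is a $\CC[U]$-linear combination of monomials in the $v_h$ and $w_h$ whose coefficients are partial derivatives of polynomials in $U$. Restricting the displayed identity to $C$ and using $F_{p'q'r'}|_C=R_{p'q'r'}|_C$, the error $\textsl{g}\cdot F_{pqr}-a^{d+m}F_{pqr}$ becomes $\sum\gamma_{p'q'r'}(a,b,c)\,R_{p'q'r'}|_C$, which is cancelled on $C$ by $\textsl{g}$-transforming a correction $C_{pqr}$ of the same shape. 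This is exactly the mechanism used for \eqref{eq:equationUW} and \eqref{eq:equationVW} in the proof of Lemma~\ref{lem:trailingterms}; there, for example, $\textsl{g}\cdot\bigl(\sum_h\tfrac{\partial A}{\partial u_h}(U)v_h\bigr)=a^{\,\deg A+1}\sum_h\tfrac{\partial A}{\partial u_h}(U)v_h+2(\deg A)\,b\,a^{\,\deg A-1}A(U)$ by Euler's relation, which is precisely what absorbs a $b$-term whose coefficient is a polynomial in $U$.

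The main obstacle is the combinatorial bookkeeping in this last step: one must check that a correction $C_{pqr}$ with the prescribed error exists — that the $a,b,c$-monomials occurring in $\sum\gamma_{p'q'r'}R_{p'q'r'}$ are exactly those produced by $\textsl{g}$ acting on expressions $(\text{partial derivative of a polynomial in }U)\cdot v^\beta w^\gamma$, and that the resulting linear system for the $\CC[U]$-coefficients is solvable. Both follow from weighted homogeneity (the exponent of $a$ in each term is forced by the weighted degree) together with the iterated Euler identities $\sum_h u_h\partial_hA=(\deg A)A$, $\sum_{h,k}u_hu_k\partial^2_{hk}A=(\deg A)(\deg A-1)A$, and so on, which collapse the $u_h$-factors created by the substitution; the system is then solved by peeling off one factor of $b$ or $c$ at a time, each step reducing to the inductive hypothesis at a smaller $w$-degree. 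A $w$-degree count confirms $C_{pqr}$ has $w$-degree at most $m-1$: producing a term $b^ic^ja^kR|_C$ with $R$ of $w$-degree $\le q'+2r'-1$ needs $i$ extra factors $v_h$ and $j$ extra factors $w_h$ on top of $R$, hence $w$-degree at most $(q'+2r'-1)+(i+2j)=(q'+2r'-1)+(m-q'-2r')=m-1$. Setting $R_{pqr}:=C_{pqr}+A(U)$ propagates the strengthened hypothesis, and once every $F_{pqr}\in{\rm in}\bigl(\mathcal I(\mathcal D_C)\bigr)$ the inclusion ${\rm Pol}(I_C)\subseteq{\rm in}\bigl(\mathcal I(\mathcal D_C)\bigr)$ follows.
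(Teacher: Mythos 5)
Your overall strategy is the paper's: complete each polarization coefficient $F_{pqr}$ (of $w$-degree $m=q+2r$) by trailing terms of strictly smaller $w$-degree so that the result is a relative $G$-invariant in the sense of \eqref{eq:conditioninvariace}, then invoke Lemma \ref{lemma:sectioncanonical}(2). Your preparatory observations are all correct: the $w$-homogeneity of $F_{pqr}$, the identification of the $G$-action with the substitution $(x,y,t)\mapsto(ax+2by+ct,\ a^2y+3abt,\ a^3t)$, and the resulting triangular identity $\textsl{g}\cdot F_{pqr}=a^{d+m}F_{pqr}+\sum_{q'+2r'<m}\gamma_{p'q'r'}F_{p'q'r'}$ with each $\gamma_{p'q'r'}$ divisible by $b$ or $c$.

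The gap is in the step you yourself flag as the main obstacle: the existence of a correction $C_{pqr}$ with $(\textsl{g}\cdot C_{pqr})|_C-a^{d+m}C_{pqr}|_C=\sum\gamma_{p'q'r'}R_{p'q'r'}|_C$ for all $\textsl{g}\in G$. This is the assertion that a certain $1$-cocycle of the non-reductive group $G$ is a coboundary, and it is exactly where the content of the lemma lives; saying that it ``follows from weighted homogeneity together with the iterated Euler identities'' and is ``solved by peeling off one factor of $b$ or $c$ at a time'' is not a proof. Comparing coefficients of the monomials $b^ic^ja^k$ produces an overdetermined linear system for the $\CC[U]$-coefficients of $C_{pqr}$, and nothing in your argument establishes its consistency (for a solvable group such as $G$, first cohomology does not vanish for free). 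The paper avoids this by making the problem finite: since it suffices to treat a generating set of $I_C$, Petri's theorem reduces to quadric and cubic generators (the quartic in genus three being covered by Theorem \ref{thm:quartic}), and the completions are then written down explicitly --- six expressions with trailing terms $A_{110},A_{020},A_{101},A_{011},A_{002}$ for $d=2$ and ten expressions for $d=3$ --- with the invariance \eqref{eq:conditioninvariace} checked by the same direct computations you quote from Lemma \ref{lem:trailingterms}. To close your version of the argument you must either exhibit the $C_{pqr}$ and verify the cancellation for general $d$ and $m$, or reduce to generators via Petri and carry out the finite explicit verification as the paper does.
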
 

\begin{proof}
	Let $f\in \mathcal{I}(C)_d$ be a homogeneous polynomial of degree $d$ in the canonical ideal.
	We view $f$ as a symmetric tensor of order $d$ in $g$ variables. Then $f(xU+yV+tW) = \sum_{a+b+c=d}f(U^a\otimes V^b\otimes W^c)\cdot x^ay^bt^c$. We need to prove that 
	all coefficients $f(U^a\otimes V^b\otimes W^{c})$ belong to the initial ideal of $\mathcal{I}(\mathcal{D}_C)$. If is enough to do so when $f$ is a generator of $\mathcal{I}(C)$.
	By Petri's theorem, we must consider quadrics and cubics. Quartics are covered by 
	Theorem~\ref{thm:quartic}. 

Suppose $d=2$.	
We claim that there are 
homogeneous polynomials $A_{110},A_{020},A_{101}, A_{011}$, $ A_{002}$ 
in $ \mathbb{C}[U]$ of degrees $3,4,4,5,6$ respectively such that the following six belong to $\mathcal{I}(\mathcal{D}_C)$:
$$	\begin{small} \begin{matrix}
	f(U\otimes U)\, ,\,\,
	f(U\otimes V) - A_{110}(U^3) \, , \\
	f(V\otimes V) - 2A_{110}(U^{2}\otimes V) - A_{020}(U^4) \, , \,\,
	f(U\otimes W) -\frac{3}{2}A_{110}(U^2\otimes V)-A_{101}(U^4) \, , \\
	f(V\otimes W) -A_{110}(U^2\otimes W) - \frac{3}{2}A_{110}(U\otimes V^2)-\frac{3}{2}A_{020}(U^3\otimes V)-A_{001}(U^3\otimes V)-A_{011}(U^5)\, ,\\
	f(W\otimes W) -3A_{110}(U {\otimes} V {\otimes} W) - 3A_{011}(U^4 {\otimes} V)
	-2A_{101}(U^3 {\otimes} W)-\frac{9}{4}A_{020}(U^2 {\otimes} V^2)-A_{002}(U^6).
	\end{matrix} \end{small}
$$
	The first polynomial
	 $f(U\otimes U)$ belongs to $\mathcal{I}(\mathcal{D}_C)$ by definition. Acting
	 with $\textsl{g} \in G$ on the second polynomial,
	  we obtain $\textsl{g}\cdot f(U\otimes V) = 2abf(U\otimes U)+a^3 f(U\otimes V)$.
	  The restriction to $C$ satisfies the condition \eqref{eq:conditioninvariace}.
Hence $f(U\otimes V)-A_{110}(U^3) \in \mathcal{I}(\mathcal{D}_C)$
 for some $A_{110}\in \mathcal{C}[U]_3$.
   
We next consider the third polynomial $f(V\otimes V)$.
A computation reveals
$$	\begin{matrix} 
	& \textsl{g}\cdot \left( f(V\otimes V) - 2A_{110}(U^{2}\otimes V) \right)
 \qquad \qquad   \qquad \qquad \smallskip \\ = & 4b^2 \cdot f (U {\otimes} U) 
	 +\,4a^2b \cdot \left( f(U {\otimes} V) - A_{110}(U^3)\right)
	+a^4\cdot \left(f (V {\otimes} V)-2A_{110}(U^2 {\otimes} V) \right)  .
\end{matrix} $$
Restricting this to the curve $C$ and using what is already proven,
we see that \eqref{eq:conditioninvariace} is satisfied.
By Lemma \ref{lemma:sectioncanonical}, we have  
$f(V {\otimes} V) - 2A_{110}(U^{2} {\otimes} V) - A_{020}(U^4) \in \mathcal{I}(\mathcal{D}_C)$
for some $A_{020} \in \mathbb{C}[U]_4$. The 
remaining three equations can be verified in an analogous way. 

The same reasoning works for  $d=3$. Let $f \in  \mathcal{I}(C)_3$
be a cubic that vanishes on the canonical curve.
Then there exist polynomials $A_{ijk}$ 
such that the following ten expressions are in $\mathcal{I}(\mathcal{D}_C)$.
The derivation of the  $A_{ijk}$ is analogous to the $d=2$ case
and to Lemma  \ref{lem:trailingterms}.
$$ \begin{small} \begin{matrix}
f(U^3) \, , \,\,\,
f(U^2\otimes V)  -A_{210}(U^4) \, , \,\,\,
f(U\otimes V^2)  - 2A_{210}(U^3\otimes V) - A_{120}(U^5) \, , \\
f(V^3)   -3A_{210}(U^2 {\otimes} V^2)-3A_{120}(U^4 {\otimes} V)-A_{030}(U^6) \,,\,\,
f(U^2 {\otimes} W)   -\frac{3}{2}A_{210}(U^3 {\otimes} V)-A_{201}(U^5) \, ,   \\
\,\,\, f(U {\otimes} V {\otimes} W)  -A_{210}(U^3 {\otimes} W)-\frac{3}{2}A_{210}(U^2{\otimes} V^2) - A_{201}(U^4{\otimes} V) - \frac{3}{2}A_{120}(U^4{\otimes} V)-A_{111}(U^6) \, , \\
f(V^2{\otimes} W)   -\frac{3}{2}A_{210}(U{\otimes} V^3)-2A_{210}(U^2{\otimes} V{\otimes} W)-A_{201}(U^3{\otimes} V^2)   - 3A_{120}(U^3{\otimes} V^2) \qquad \quad
\\ -\,A_{120}(U^4{\otimes} W)-2A_{111}(U^5{\otimes} V)-\frac{3}{2}A_{030}(U^3{\otimes} V) 
- A_{021}(U^7) \, , \\
f(U{\otimes} W^2)   -3A_{210}(U^2{\otimes} V{\otimes} W)-\frac{9}{4}A_{120}(U^3{\otimes} V^2)-2A_{201}(U^4{\otimes} W)-3A_{111}(U^5{\otimes} V)-A_{102}(U^7) \, , \quad \\
\end{matrix} \end{small} $$
$$ \begin{small} \begin{matrix}
\! f(V{\otimes} W^2)   {-} 3A_{201}(U{\otimes} V^2{\otimes} W) {-} A_{210}(U^2{\otimes} W^2)
{-} 2A_{201}(U^3{\otimes} V{\otimes} W)  {-} 2A_{111}(U^5{\otimes} W) 
 {-} \frac{9}{4}A_{120}(U^3{\otimes} V^3)\\ \quad -3A_{120}(U^3{\otimes} V{\otimes} W) {-}3A_{111}(U^4{\otimes} V^2)  {-} \frac{9}{4}A_{030}(U^4{\otimes} V^2){-} 3A_{021}(U^6{\otimes} V)
 {-}A_{102}(U^6{\otimes} V){-}A_{012}(U^8)\, , \\
f(W^3)   -\frac{9}{2}A_{210}(U{\otimes} V{\otimes} W^2)-3A_{201}(U^3{\otimes} W^2)-\frac{27}{4}A_{120}(U^2{\otimes} V^2{\otimes} W)   - 9A_{111}(U^3{\otimes} V^2{\otimes} W) \qquad \quad
\\ \quad \,\, - \frac{27}{8}A_{030}(U^3{\otimes} V^3)-\frac{27}{4}A_{021}(U^5{\otimes} V^2) -3A_{102}(U^5{\otimes} W)-\frac{9}{2}A_{012}(U^7{\otimes} V)-A_{003}(U^9).
\end{matrix} \end{small}
$$
This completes the proof of Lemma~\ref{lemma:polarization}.
\end{proof}

\begin{proof}[Proof of Theorem \ref{thm:initialideal}]
The canonical ring $\CC[U]/I_C$ is an integral domain. We
consider its Segre product 
with a polynomial ring in three variables. This is the quotient
of the polynomial ring $\CC[U,V,W]$ in $3g$ unknowns modulo
a prime ideal $K$. The ideal $K$ is described in several sources, including the 
second textbook by Kreuzer and Robbiano  \cite[Tutorial~82]{KR}.
We also refer to Sullivant \cite[\S 3.1]{Seth} who offers a more
general construction of toric fiber products, along with a recipe for
lifting generators and Gr\"obner bases from $I_C$ to~$K$.

The Segre product ideal $K$ is precisely our ideal on the right hand side in (\ref{eq:inIDC}).
The irreducible affine variety in $\CC^{3g}$ defined by $K$ has dimension $4$.
Indeed, the point $U$ lies in the cone over
the curve $C$, and $V$ and $W$ are multiples of $U$, so there are four
degrees of freedom in total.
The only difference to the standard setting in \cite{KR, Seth} is our grading, 
with degrees $1,2,3$ for $U, V,W$ respectively.
We conclude that $K$ defines a threefold in $\mathbb{WP}^{3g-1}$.

We next claim that the initial ideal
${\rm in}\bigl(\mathcal{I}(\mathcal{D}_C) \bigr)$ contains $K$.
For every generator of $K$, we must find a polynomial with all terms of lower 
weight which is congruent modulo  $\mathcal{I}(\mathcal{D}_C)$ to that generator.
For the $2 \times 2 $ minors of the $g \times 3$-matrix $(UVW)$,
this is precisely the content of Lemma \ref{lem:trailingterms}.
For example, in (\ref{eq:jacobrel}) the generator $u_k v_l - u_l v_k$
is congruent to  $\pm J_{kl}$.
In general, these are the trailing terms in
\eqref{eq:equationUV},
\eqref{eq:equationUW} and \eqref{eq:equationVW}.
For the polarizations of the generators of the canonical ideal $I_C$,
the trailing terms are constructed in Lemma \ref{lemma:polarization}.

Now, we know that  $\mathcal{I}(\mathcal{D}_C) $ is prime of
dimension $4$, and hence ${\rm in}\bigl(\mathcal{I}(\mathcal{D}_C) \bigr)$
has dimension~$4$. But, it need not be radical and it could have embedded
components. However, it contains the prime ideal $K$
of the same dimension. This implies that 
$\,K = {\rm in}\bigl(\mathcal{I}(\mathcal{D}_C) \bigr)\,$ as desired.
\end{proof}

\section{Degenerations} \label{sec6}

A standard technique for studying smooth algebraic curves 
is to replace them with curves that are  singular and reducible.
Such degenerations are central to the theory of moduli spaces.
The study of moduli of curves is a vast subject, with lots of beautiful combinatorics.
Keeping this broader context in the back of our minds, we here ask the following question:
$$ \text{\em What happens to the  
Dubrovin threefold $\,\mathcal{D}_C$ when the curve $C$ degenerates?} $$

Our aim in this section is to take first steps towards answering that question.
We focus on two classes of degenerations. We describe these by
their effects on the Riemann theta function $\theta({\bf z})$ associated with the curve.
First, there are the degenerations that are visible in the
Deligne-Mumford moduli space $ \overline{\mathcal{M}}_g$.
We refer to them as {\em tropical degenerations}.
 These turn the infinite sum
on the right hand side of \eqref{eq:RTFreal} into a finite sum of exponentials.

Second, there is the class of {\em node-free degenerations}, which replace
 the Riemann theta function by polynomials.
These polynomial theta functions were characterized for genus three by Eiesland
 \cite{Eies09}. His list was studied computationally in our recent work~\cite[\S 5]{struwe}.
These degenerations lead to rational solutions of Hirota's equation \eqref{eq:hirota}, 
and these give soliton solutions of the KP equation \eqref{eq:KP1}.
We expect interesting connections to the theory in~\cite{Kodamabook}.

We begin with the first class, namely the tropical degenerations.
A  {\em rational nodal curve} $C$ is a stable curve of genus $g$ whose
irreducible components are rational. Stability implies that all singularities are nodes.
The dual graph $\mathcal{G}$ of $C$ has one vertex for each irreducible component
and one edge for each node. This edge is a loop when this node is a singular
point on one irreducible component. Two vertices of $\mathcal{G}$ can be
connected by multiple edges, namely when two irreducible components
of $C$ intersect in two or more points. The hypothesis that each irreducible
component is rational implies that  $\mathcal{G}$ is trivalent and it has
$2g-2$ vertices and $3g-3$ edges. Up to isomorphism, the number of such trivalent graphs is
$2,5, 17, 71, 388, \ldots$ when $g=2,3,4,5,6,\ldots$. The tropical Torelli map \cite[\S 6]{BBC}
contracts all bridges in the graph $\mathcal{G}$. Combinatorially, it is the map
that takes $\mathcal{G}$ to its  corresponding cographic matroid.

From the cographic matroid, one derives the Voronoi subdivision of $\RR^g$.
It is dual to the Delaunay subdivision \cite[\S 5]{BBC}, which is the regular polyhedral subdivision 
of $\ZZ^g$ induced by a quadratic form given by the Laplacian of $\mathcal{G}$.
The Voronoi cell is the set of all points in $\RR^g$ whose closest lattice point
is the origin. This $g$-dimensional polytope belongs to the
class of unimodular zonotopes.  The possible combinatorial types of Voronoi cells are listed in
\cite[Figure 4]{struwe} for $g=3$ and in \cite[Table 1]{CKS} for $g=4$.
Every vertex ${\bf a}$ of the Voronoi cell is dual to a Delaunay polytope.
We write $V_{\bf a} \subset \ZZ^g$ for the set of vertices of this Delaunay polytope.

We write $V_{\bf a} \subset \ZZ^g$ for the set of vertices of this Delaunay polytope.
With this we associate the following function, given
by a finite exponential sum with  certain coefficients $\gamma_{\bf u} \in \mathbb{C}$:
\begin{equation}\label{eq:exponentialsum}
\theta_{C,\mathbf{a}}({\bf z}) \quad =\quad 
 \sum_{{\bf u} \in V_{\bf a}} \gamma_{\bf u} \cdot {\rm exp}( {\bf u}^T {\bf z} ) .
\end{equation}
The following was shown in \cite[Theorem 4.1]{struwe} for genus $g=3$,
that is, for quartics $C$ in $\PP^2$.

\begin{proposition} \label{thm:exponentialsum}
  Consider a rational nodal quartic $C$ and a vertex ${\bf a}$ of the Voronoi cell as above. There
  exists a choice of coefficients $\gamma_{\bf u} \in \mathbb{C}$ such that  the function 
  $\theta_{C,\mathbf{a}}$ is a limit of translated Riemann theta functions 
  associated with a family of smooth quartics with limit  $C$.
  \end{proposition}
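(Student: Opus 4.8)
This statement is \cite[Theorem 4.1]{struwe}; the plan I would follow mirrors the argument there. The idea is to realize $\theta_{C,\mathbf a}$ as a renormalized limit of Riemann theta functions along a suitable one-parameter degeneration of smooth plane quartics to the rational nodal quartic $C$. First I would use that non-hyperelliptic smooth genus-three curves are exactly smooth plane quartics, and that these form a dense open subset of $\overline{\mathcal M}_3$, so the boundary point $[C]$ is a limit of plane quartics; after a base change this gives a family $\mathcal C\to\Delta$ over a disk whose general fibre $C_t$ is a smooth plane quartic and whose central fibre is $C_0=C$. On the punctured disk I would fix a symplectic basis of $H_1(C_t,\ZZ)$ adapted to the vanishing cycles, so that the cycles pinching to the nodes of $C$ sit among the prescribed classes, and I would arrange the rates at which the nodes are acquired so that they lie in the cone of the tropical moduli space attached to the vertex $\mathbf a$. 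The classical asymptotics of periods under such a degeneration (see \cite{BBC} and the references therein) then give, as $t\to0$,
\begin{equation*}
B_t \;=\; \log|t|\cdot Q_{\mathcal G} \;+\; B^{(0)} \;+\; o(1),
\end{equation*}
where $Q_{\mathcal G}$ is the positive semidefinite quadratic form of the cographic matroid of the dual graph $\mathcal G$ of $C$ (the Laplacian form of $\mathcal G$ after contracting bridges) and $B^{(0)}$ records the finite part of the periods; since $\log|t|\to-\infty$ this is compatible with $\mathrm{Re}(B_t)$ being negative definite.

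Next I would substitute this expansion into the theta series \eqref{eq:RTFreal}. Translating by a real vector $\boldsymbol\chi$ and completing the square,
\begin{equation*}
\theta(\mathbf z+B_t\boldsymbol\chi\mid B_t)\;=\;e^{-\frac12\boldsymbol\chi^T B_t\boldsymbol\chi-\boldsymbol\chi^T\mathbf z}\sum_{\mathbf u\in\ZZ^g}\exp\!\Big(\tfrac12(\mathbf u+\boldsymbol\chi)^T B_t(\mathbf u+\boldsymbol\chi)+(\mathbf u+\boldsymbol\chi)^T\mathbf z\Big),
\end{equation*}
and the growth rate of the $\mathbf u$-th summand is governed by $\tfrac12\log|t|\cdot(\mathbf u+\boldsymbol\chi)^T Q_{\mathcal G}(\mathbf u+\boldsymbol\chi)$. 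Since $\log|t|<0$, the dominant terms are those $\mathbf u$ minimizing $(\mathbf u+\boldsymbol\chi)^T Q_{\mathcal G}(\mathbf u+\boldsymbol\chi)$; working modulo $\ker Q_{\mathcal G}$ (the bridge directions, which the tropical Torelli map contracts and which contribute only a harmless translation), the minimizers are, by the definition of the Delaunay subdivision of $Q_{\mathcal G}$, the vertices of the Delaunay cell containing $-\boldsymbol\chi$; choosing $\boldsymbol\chi$ so that this cell is the one dual to the vertex $\mathbf a$ of the Voronoi cell, this set is exactly $V_{\mathbf a}$. After multiplying by $|t|^{-\frac12\mu}$, where $\mu$ is the common minimal value, and discarding the $\mathbf u$-independent prefactor, the terms with $\mathbf u\in V_{\mathbf a}$ converge while the rest decay: the next Delaunay value exceeds $\mu$ by a positive gap $\delta$, so a standard Gaussian tail bound controls the discarded part by $O(|t|^{\delta/2})$ locally uniformly in $\mathbf z$. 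Letting $t\to0$ I would obtain $\sum_{\mathbf u\in V_{\mathbf a}}\gamma_{\mathbf u}\exp(\mathbf u^T\mathbf z)$ with $\gamma_{\mathbf u}=\exp\!\big(\tfrac12(\mathbf u+\boldsymbol\chi)^T B^{(0)}(\mathbf u+\boldsymbol\chi)\big)$, up to one common nonzero scalar; absorbing the $t$-dependent normalization and the shift $B_t\boldsymbol\chi$ into the words \emph{translated Riemann theta functions}, this is the limit asserted in the Proposition, with coefficients of the shape in \eqref{eq:exponentialsum}.

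The hard part will be the analysis of this degeneration: pinning down the period asymptotics accurately enough to identify the $\log|t|$-coefficient with $Q_{\mathcal G}$ and to control the finite part $B^{(0)}$ that produces the $\gamma_{\mathbf u}$, and then matching the analytic dominance condition with the combinatorics of the Voronoi--Delaunay subdivision so that the index set surviving in the limit is exactly $V_{\mathbf a}$ and not a neighbouring Delaunay cell. This is where the tropical Torelli description of \cite{BBC} and the genus-three classification of Voronoi cells used in \cite{struwe} carry the weight of the argument. The preliminary step of realizing the degeneration by smooth \emph{plane} quartics with the prescribed rational nodal central fibre, and choosing both the symplectic basis and the degeneration rates compatibly, is routine via semistable reduction but should be carried out carefully so that the dual graph and its cographic matroid are the ones appearing in the statement.
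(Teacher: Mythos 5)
Your proposal is correct in outline but takes a genuinely different route from the paper's, which offers no argument of its own: Proposition~\ref{thm:exponentialsum} is established there purely by citation to \cite[Theorem 4.1]{struwe}, and the paper explicitly records that the proof in \cite{struwe} ``uses a linear family of Riemann matrices.'' That argument runs in the opposite direction to yours: one writes down a family $B_\lambda=\lambda Q_{\mathcal G}+B_0$ of Riemann matrices directly, invokes the genus-three solution of the Schottky problem to know that the generic member is the Riemann matrix of a smooth plane quartic (so the family of curves is obtained from the family of matrices, not the other way around), and then performs exactly the translate-and-extract-dominant-terms analysis of your second paragraph. You instead begin with a semistable degeneration of curves and import the period asymptotics $B_t=\log|t|\cdot Q_{\mathcal G}+B^{(0)}+o(1)$; this is precisely the ``known results about degenerations of Jacobians'' strategy that the paper proposes for extending the statement to $g\geq 4$ and explicitly contrasts with what \cite{struwe} does. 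The trade-off is clear: your route generalizes, but the step you yourself label ``the hard part''---identifying the $\log|t|$-coefficient with the cographic form $Q_{\mathcal G}$ and controlling the finite part $B^{(0)}$---is the entire substance of the argument and is only deferred to \cite{BBC} and period-degeneration theory, whereas the linear-family trick makes that step free at the price of being confined to $g\leq 3$. Two smaller corrections: the rates at which the nodes are acquired determine $Q_{\mathcal G}$ and hence the Delaunay--Voronoi data, while the vertex $\mathbf a$ is selected only through the translation vector $\chi$, so your first paragraph conflates two independent choices (your second paragraph gets this right); and for a stable quartic all of whose components are rational, $Q_{\mathcal G}$ is positive definite on $H_1(\mathcal G,\ZZ)\cong\ZZ^3$, so there is no kernel of ``bridge directions'' to quotient out.
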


The proof in \cite{struwe} uses a linear family of Riemann matrices
and it does not extend to higher genus. However, the statement should be
true for all $g \geq 4$, and it should follow from known results
about degenerations of Jacobians. See also the discussion in
\cite[Lemma 4.2]{Little}.
 Another approach to a proof
 is the use of non-Archimedean geometry as in~\cite{FRSS}.
The tropical limit process in \cite[\S 4.3]{FRSS} can be viewed as a
flat family with special fiber~$C$.
 Moreover, we believe that a converse statement holds, namely that every
flat family of smooth curves which degenerates to a rational nodal curve $C$
induces a truncated theta series of the form~(\ref{eq:exponentialsum}).

There are two extreme cases of special interest. If $C$ is a rational curve
with $g$ nodes then the Delaunay polytope is a cube and the
 theta function $\theta_C$ is an exponential sum of $2^g$ terms
(cf.~\cite[Example 4.3]{struwe}).
This case corresponds to soliton solutions of the KP equation~\cite{Kodamabook}.

At the other end of the spectrum are {\em graph curves} \cite{BayEis}.
These are curves whose canonical model consists of $2g-2$ straight lines in $\PP^{g-1}$.
The canonical ideal $I_C$ of a graph curve  $C$ has a combinatorial description, given in \cite[\S 3]{BayEis}.
The corresponding trivalent graph $\mathcal{G}$ is simple, and it has no loops or multiple edges.
For instance, for $g=3$ this implies that $\mathcal{G}$ equals $K_4$, the complete graph on
four nodes. The associated theta function is a sum of four terms, one for each vertex of
the Delaunay tetrahedron. Namely, in \cite[equations (29) and (54)]{struwe} we find
\begin{equation}
\label{eq:tetrahedraltheta}
 \theta_C({\bf z}) \,\,= \,\, \gamma_0 \,+\, \gamma_1 \cdot {\rm exp}(z_1) \,+\,
 \gamma_2 \cdot {\rm exp}(z_2) \,+\, \gamma_3 \cdot {\rm exp}(z_3). 
 \end{equation}
For $g=4$ there are two types of graph curves, namely $\mathcal{G}$ is either the bipartite graph $K_{3,3}$
or the edge graph of a triangular prism. Their theta functions are truncations as in
(\ref{eq:exponentialsum}).

\begin{example}[Four lines in $\PP^2$] \label{ex:fourlinesinP2}
Let $g=3 $ and consider the plane quartic $C$ defined by
\begin{equation}
\label{eq:fuuuu}
 f \quad = \quad u_2 u_3 (u_2 - u_1) (u_3 - u_1).
\end{equation}
This is a graph curve with $\mathcal{G} = K_4$. One approach to defining a
Dubrovin threefold $\mathcal{D}_C$ in $\mathbb{WP}^8$ is to use Theorem \ref{thm:quartic}.
The ideal $I$ given there is the
intersection of four prime ideals:
\begin{equation}
\label{eq:fourprimeideals}
 \begin{matrix}
I & = & \quad
(I + \langle u_2,v_2,w_2\rangle) \,\cap \,
(I + \langle u_2-u_1,v_2-v_1,w_2-w_1\rangle) \\  & & \cap \, \,\,
(I + \langle u_3,v_3,w_3\rangle) \,\cap \,
(I + \langle u_3-u_1,v_3-v_1,w_3-w_1\rangle) .
\end{matrix}
\end{equation}
Each associated prime has six minimal generators, of degrees $1,2,3,3,4,5$. For instance,
$$ I + \langle u_2,v_2,w_2\rangle \,\, = \,\,
\langle\, u_1\, ,\, v_1\,,\, w_1\,,\,
 u_3 v_1 + u_1 v_3 + \cdots,\, 
 u_3 w_1 + u_1 w_3 + \cdots,\, 
 v_3 w_1 + v_1 w_3 + \cdots \,\rangle. $$
 Remarkably, the radical ideal $I$ has $17$ minimal 
generators, of precisely the degrees promised in Theorem \ref{thm:quartic}.
Here, tropical degeneration to $f$ gives  a flat family of Dubrovin threefolds.

A second approach is to apply the PDE method in Section \ref{sec4}.
For each ${\bf z} \in \CC^3$ we can define a Hirota quartic $H_{\bf z}$
use the tetrahedral theta function  in \eqref{eq:tetrahedraltheta}.
We find that $H_{\bf z}$ equals
\begin{equation}
\label{eq:fourlineshirota}
\begin{matrix}
  &8d\gamma_0^2\,+\,8d\gamma_1^2 \cdot {\rm exp}(2z_1)\,+\,
  8d\gamma_2^2 \cdot {\rm exp}(2z_2)\,+\,8d\gamma_3^2 \cdot {\rm exp}(2z_3)\\
&+\sum_{i=1}^3\gamma_0\gamma_i(u_i^4+6cu_i^2+3v_i^2-4u_iw_i+16d) \cdot{\rm exp}(z_i)\\
&+\sum_{1\leq i<j \leq 3}\gamma_i\gamma_j(u_i^4-4u_i^3u_j+6u_i^2u_j^2+6cu_i^2-4u_iu_j^3-12cu_iu_j+u_j^4  -4u_iw_i 
\\ &\quad \quad \quad\quad
+4u_iw_j
+6cu_j^2+4w_iu_j-4w_ju_j+3v_i^2-6v_iv_j+3v_j^2+16d) \cdot {\rm exp}(z_i+z_j).
\end{matrix}
\end{equation}
For the specific quartic $f$ in \eqref{eq:fuuuu}, we have $\gamma_0=\gamma_1= \gamma_2=1, \gamma_3=-1$,
 by \cite[Example 3.3]{struwe}.
 The ideal $\langle H_{\bf z} : {\bf z} \in \CC^3 \rangle $ is generated by
 $d$ and the six other coefficients of the exponential terms.
It defines a variety in $\mathbb{WP}^{10}$, but this now
has dimension four and is reducible. Its projection into the
$\mathbb{WP}^{8}$ with coordinates $U,V,W$ decomposes into two isomorphic schemes,
corresponding to the union  $\,\mathcal{D}_C \cup \mathcal{D}_C^{-}\, $ in Remark \ref{rem:theothercomponent}.
Each of these two pieces is reducible. Indeed, we found many components of dimension three and four.
These deserve further~study.

Finally, our third method is to use the algebraic parametrization in  Section~\ref{sec3}. 
The basis \eqref{eq:differentialBasisTrott} for $H^0(C, \Omega^1_C)$
is valid also for reducible curves, possibly after a linear change of coordinates. We consider the image of
 $C \rightarrow \mathbb{WP}^8$  in~\eqref{eq:happymap},
and we compute its orbit under the group $G$.
After a linear change of coordinates, then
the resulting threefold has four irreducible components,
and its radical ideal coincides with \eqref{eq:fourprimeideals}.
 \end{example}

\begin{example}[Six lines in $\PP^3$] \label{ex:sixlinesinP3}
Let $g=4 $ and consider the space sextic $C$ defined by
$$ q \,\, = \,\, u_1 u_4 - u_2 u_3 \quad {\rm and} \quad
 f  \,=\,  u_1 u_2 u_3-u_2^2 u_4-u_3^2 u_4+u_4^3 . $$
 In spite of $q$ and $f$ being irreducible,
their ideal $\langle q,f \rangle$ decomposes.
This is a graph curve, with $\mathcal{G} = K_{3,3}$.
We consider the three approaches  to $\mathcal{D}_C $
as in Example \ref{ex:fourlinesinP2}. 
The description in Example~\ref{ex:gennuss4} leads
to a radical ideal that is  the intersection of six prime ideals, 
analogously to~\eqref{eq:fourprimeideals}.
Taking $f = (x^3-x)(y^3-y)$ in \eqref{eq:affine33},
we obtain a rational map into $  \mathbb{WP}^{11}$.
The orbit of the image under $G$ is a threefold with only three irreducible components.
However, after a general linear change of coordinates, we recover
all six components, and the two ideals~agree.
\end{example}

\smallskip

We now turn to node-free degenerations. 
By a {\em rational node-free curve} we mean a reduced curve $ C$  of geometric genus $g$ whose
components are rational and none of whose singularities are nodes.
One example is an irreducible rational curve whose singularities
are cusps.  Yet, typical examples are reducible, such as the
cuspidal cubic together with its cuspidal tangent in \cite[Example 3.1]{struwe}.
Node-free degenerations were
classified for genus three by Eiesland \cite{Eies09}.
Based on his work, and our recent follow-up in \cite[\S 5]{struwe}, we 
believe that the following holds.

\begin{conjecture} \label{thm:thetapolynomial}
Consider a flat family of smooth curves which degenerates to a rational node-free curve $C$.
In its limit,  the Riemann theta function converges to a polynomial in ${\bf z} $.
\end{conjecture}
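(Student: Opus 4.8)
To establish Conjecture~\ref{thm:thetapolynomial}, the plan is to recast the statement as a question about the degeneration of the Jacobian together with its theta section, and to exploit the structural fact that a rational node-free curve has a \emph{unipotent} generalized Jacobian. Concretely, I would first pass from the given flat family of smooth curves over a disk $\Delta$ with central fibre $C$ to a family over the spectrum of a discrete valuation ring, perform semistable reduction if necessary, and consider the associated family of Jacobians $\mathrm{Jac}(\mathcal{C}_t)$. By the theory of degenerations of Jacobians (Mumford's construction, Faltings--Chai, and the Néron-model description of generalized Jacobians in the spirit of Oda--Seshadri and Serre), this family extends over the puncture to a semiabelian family whose central fibre is the generalized Jacobian $\overline{\mathrm{Jac}}(C)$. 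The crucial input is that, since every component of $C$ is rational and \emph{no} singularity of $C$ is a node, the dual graph of $C$ carries no cycle and the only contributions to $\overline{\mathrm{Jac}}(C)$ come from the $\delta$-invariants of unibranch or tangential singularities, which are \emph{unipotent}. Hence $\overline{\mathrm{Jac}}(C)\cong\mathbb{G}_a^g\cong\mathbb{A}^g$ as an algebraic group: there is no toric part, and it is precisely the absence of this toric part that forbids the exponential terms which appeared in the tropical case of Proposition~\ref{thm:exponentialsum}.

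Next I would track the Riemann theta function through this degeneration as a section of the principal-polarization bundle $\Theta_t$ on $\mathrm{Jac}(\mathcal{C}_t)$. After a suitable $t$-dependent translation of the argument $\mathbf{z}$ by a vector $D(t)$, chosen so that the leading behaviour does not vanish, and a rescaling by an appropriate power $t^{N}$ of the uniformizer, the normalized theta functions $t^{N}\,\theta(\mathbf{z}+D(t)\mid B(t))$ should converge to a nonzero holomorphic function $\vartheta_0$ on $\mathbb{A}^g$, realized as a section of a limit line bundle $L_0$ on $\overline{\mathrm{Jac}}(C)\cong\mathbb{A}^g$. Since every line bundle on affine space is trivial, and $\vartheta_0$ is a regular section --- being a limit of algebraic theta sections and equivariant under the limiting theta-group action --- the function $\vartheta_0$ must be a polynomial in $z_1,\dots,z_g$, which is the assertion. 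As an independent and more hands-on route, which also recovers the explicit polynomials of Schur and Adler--Moser type, I would run a double degeneration: first degenerate $C$ through rational \emph{nodal} curves, for which the theta function is the finite exponential sum of Proposition~\ref{thm:exponentialsum}, and then let the corresponding exponents collide as the nodes coalesce into the non-nodal singularities of $C$. The confluent limit of such exponential sums, after rescaling the coefficients $\gamma_{\mathbf u}$, is a polynomial, exactly as in the passage from soliton to rational solutions of the KP equation \cite{Kodamabook}.

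The main obstacle is the middle step: the limiting object $\mathbb{A}^g$ is \emph{not proper}, so it lies outside the standard compactifications of the moduli of (semi)abelian varieties and outside Alexeev's stable semiabelic pairs, and one cannot simply quote a ready-made extension of the theta bundle and its section. One must instead develop, or adapt from the integrable-systems literature, a theory of \emph{additive} (unipotent) degenerations of principally polarized abelian varieties, pin down the correct normalizing data $D(t)$ and $N$, and --- most delicately --- prove that the resulting limit is not identically zero, so that one genuinely lands on a nonconstant polynomial theta function of the kind classified by Eiesland \cite{Eies09}. A sensible order of attack is to verify the conjecture first for $g\le 3$, where Eiesland's list is available, and for the reducible examples discussed in \cite[\S 5]{struwe}; then to treat the totally unipotent ``graph-curve analogue'', where rational curves are glued along tangential contacts; and finally to handle mixed configurations, being careful that any residual cycle in the dual graph reintroduces a $\mathbb{G}_m$ factor and hence exponential, rather than polynomial, behaviour. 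Along the way one should also aim at the expected converse --- that every flat smoothing of a rational node-free curve produces a polynomial theta whose degree and shape are dictated by the singularities of $C$ --- paralleling the non-Archimedean picture of \cite{FRSS} in which the skeleton of $C$ has now collapsed to a single point.
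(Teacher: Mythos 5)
This statement is labeled as a \emph{conjecture} in the paper: the authors explicitly say they ``believe that the following holds'' on the basis of Eiesland's genus-three classification and the examples in \cite{struwe}, and they offer no proof. So there is no argument of theirs to compare yours against; what you have written is a research programme, and you candidly flag its main obstruction yourself. As a proof it does not go through, for two concrete reasons. First, your structural anchor --- that a rational node-free curve has unipotent generalized Jacobian $\mathbb{G}_a^g$, hence ``no toric part, hence no exponentials'' --- is not correct as stated. For a singular point with $b$ branches and $\delta$-invariant $\delta$, the local contribution to the generalized Jacobian is $\mathbb{G}_m^{\,b-1}\times\mathbb{G}_a^{\,\delta-b+1}$; only unibranch singularities (cusps and their higher analogues) are purely additive. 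A tacnode, or any point where two rational components meet with contact of order at least two, contributes a $\mathbb{G}_m$ factor even though it is not a node, and the dual graph of a reducible node-free curve can certainly carry cycles. The paper's own motivating example --- the cuspidal cubic together with its cuspidal tangent, \cite[Example 3.1]{struwe} --- is reducible with a multibranch non-nodal singularity, so your dichotomy ``node $\Leftrightarrow$ toric contribution'' breaks exactly on the cases the conjecture is about. Either you must show that the residual multiplicative part only produces an overall factor $\exp(\ell(\mathbf z))$ for a linear form $\ell$ (absorbable into the trivialization of the theta bundle), or the unipotence claim has to be abandoned and replaced by a finer analysis of the limit.

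Second, even granting the group-theoretic picture, the step you call the ``main obstacle'' is genuinely the whole content of the conjecture: there is no existing extension of the principal polarization and its theta section over a non-proper, additively degenerate central fibre, no canonical choice of the normalizing data $D(t)$ and $t^{N}$, and no argument that the rescaled limit is nonzero. Your alternative ``confluent limit'' route through Proposition~\ref{thm:exponentialsum} is attractive and closer in spirit to how the paper's authors seem to think about it (it parallels the soliton-to-rational degeneration in \cite{Kodamabook}), but Proposition~\ref{thm:exponentialsum} is only established for $g=3$ in \cite{struwe}, and the collision of exponents requires a uniform rescaling of the coefficients $\gamma_{\mathbf u}$ that you do not supply. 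In short: the strategy is sensible and the suggested order of attack (genus $\le 3$ first, then unibranch singularities, then mixed configurations) is reasonable, but what you have is an annotated statement of the difficulty, not a proof --- which is consistent with the fact that the authors themselves left this as a conjecture.
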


In what follows we focus on the case of genus three.
The classification of node-free degenerations by Eiesland \cite{Eies09}
was carried out in the context of  double translation surfaces.
This subject was initiated by Lie in the 19th century.  In our recent study \cite{struwe},
we use the term {\em theta surfaces} for what is essentially the
zero set of  the theta function in $\CC^3$. We refer to the work of
Little \cite{Little} for a 20th century generalization of Lie's theory to higher genus.

Consider a reduced, but possibly singular, plane
quartic curve $C\subset \mathbb{P}^2$, given by an affine equation $f(x,y)=0 $.  Around each smooth point of $C$, with $f_y\ne 0$, we can consider the differentials
$\omega_1,\omega_2,\omega_3$ in \eqref{eq:differentialBasisTrott}.
Fix two distinct smooth points $p_0,q_0 \in C$. For points $p$ and $q$ moving in small neighborhoods of $p_0$ and $q_0$ respectively in the curve $C$, we consider the~map
\begin{equation}
\label{eq:pqmap}
 (p,q)\,\, \mapsto \,\,\left( \int_{p_0}^p \! \omega_1 \,+\, \int_{q_0}^q \!\omega_1\, ,\,\,
 \int_{p_0}^p \!\omega_2 \,+\, \int_{q_0}^q \!\omega_2 \, , \,\,
  \int_{p_0}^p \!\omega_3 \,+\, \int_{q_0}^q \!\omega_3 \right) .
\end{equation}  
  Its image in $\mathbb{C}^3$ is the theta surface of $C$.
  It satsifies an analytic equation $\theta_C(z_1,z_2,z_3)=0$.  For smooth curves $C$,
    the function $\theta_C$ equals the classical Riemann theta function, 
    up to an affine change of coordinates that is similar to \eqref{eq:basistransf}.
    If the curve $C$ is singular, then $\theta_C$ is a degenerate theta function, as in
equation  (\ref{eq:exponentialsum})  or Conjecture
\ref{thm:thetapolynomial}.     We would like to
  define the Dubrovin threefolds
  $\mathcal{D}_C$ and $\mathcal{D}^{\rm big}_C$ as in Sections 
  \ref{sec:Intro}--\ref{sec4}, either by a parametrization
  or by implicit equations. But this is subtle, as shown for graph curves in Examples 
  \ref{ex:fourlinesinP2} and~\ref{ex:sixlinesinP3}.

We studied this issue experimentally for Eiesland's curves in
 \cite{Eies09}. Their
theta functions are polynomials  of degrees $3,4,5,6$.
See \cite[\S 5]{struwe} for pictures of these polynomial theta surfaces in~$\RR^3$.
Here is a concrete example of a node-free curve and its polynomial theta function.

\begin{example} Following \cite[Example 5.3]{struwe}, we consider the quartic curve $C$ in $\PP^2$
with affine equation $f=x^4-y^3$. The unique singular point  $(0,0)$ is not a node.
Using the  rational parametrization $x=t^{-3},\,y=t^{-4}$, 
we write our three differentials with local coordinate $t$ as 
$\,\omega_1=t^4 \cdot dt, \,\omega_2=t \cdot dt,\, \omega_3=1 \cdot dt$.
This implies that the theta surface has the parametrization
$$  z_1 \,=\, \frac{1}{5}(p^5 + q^5)\,, \,\,\, z_2 \,=\, \frac{1}{2}(p^2+q^2)\, ,\,\,\, z_3 \,=\, p+q. $$
The parameters $p$ and $q$ correspond to the
integration limits in (\ref{eq:pqmap}), via a 
slight abuse of notation.
See also \cite[Example 4.3]{Little}.
We conclude that the theta function of $C$ is the quintic
$$ \theta_C(z_1,z_2,z_3) \,\,=\,\, z_3^5\,-\,20z_2^2z_3\,+\,20z_1. $$

We examine our three approaches to the Dubrovin threefold
as in Example~\ref{ex:fourlinesinP2}.
We begin with Theorem \ref{thm:quartic} for the homogeneous quartic
$f = u_1^4 - u_2^3 u_3$.
The ideal given there is a flat degeneration of the general case. It
is a prime ideal, with $17$ minimal generators as before, 
starting with
$\,
u_2^3+u_2 v_1-u_1 v_2,
3 u_2^2 u_3-u_3 v_1+u_1 v_3,
4 u_1^3-u_3 v_2+u_2 v_3,
12 u_1^2   v_1-2 u_3 w_2+2 u_2 w_3 ,\ldots $

Our second approach is to apply the PDE method of Section \ref{sec4}, but in arithmetic over~$\QQ$.
The ideal generated by the Hirota quartics $H_{\bf z}$ defines a threefold in $\mathbb{WP}^{10}$. 
By eliminating $c$ and $d$, we obtain a threefold in $\mathbb{WP}^8$ which is also a candidate for the
Dubrovin threefold. It has two components which map to each other under the involution $(U,V,W)\mapsto (U,-V,W)$.
One of the two components agrees set-theoretically with that given by the prime ideal above.

Our third method is to use an algebraic parametrization 
based on the differential forms $\omega_1,\omega_2,\omega_3$ as in Section \ref{sec3}.
This leads to the same prime ideal with $17$ generators in $\QQ[U,V,W]$.
\end{example}   

\begin{remark}
The KP solutions with polynomial theta functions should be compared with the {\em lump solutions}
in \cite{AblLump}. There is surely a lot more to be said about the degenerations~above.
\end{remark}

\bigskip

\noindent {\bf Acknowledgements}.
We thank Bernard Deconinck for many inspiring conversations during a 
visit by D.A. to Seattle. D.A. thanks the Department of Applied Mathematics of the UW for its hospitality and MATH+ Berlin for 
financial support. We are grateful to Atsushi Nakayashiki and Farbod Shokrieh for helpful discussions
on topics in this article.
Many thanks also to Aldo Conca and Seth Sullivant for supplying references on 
Segre products.

 \vspace{-2mm}

\bigskip \medskip

\noindent
\footnotesize 
{\bf Authors' addresses:}

\smallskip

\noindent Daniele Agostini,  Humboldt-Universit\"at zu Berlin,
\hfill  {\tt daniele.agostini@math.hu-berlin.de}

\noindent 
 T\"urk\"u \"Ozl\"um \c{C}elik,  Universit\"at Leipzig and MPI-MiS Leipzig,
\hfill {\tt turkuozlum@gmail.com}

\noindent Bernd Sturmfels,
MPI-MiS Leipzig and UC Berkeley,
\hfill {\tt bernd@mis.mpg.de}

\end{document}